\newtheorem{theorem}{Theorem}[section]
\newtheorem{lemma}[theorem]{Lemma}
\newtheorem{proposition}[theorem]{Proposition}
\newtheorem{corollary}[theorem]{Corollary}
\theoremstyle{remark}
\newtheorem{remark}[theorem]{\bf Remark}
\newtheorem{question}[theorem]{\bf Question}
\newtheorem{example}[theorem]{\bf Example}
\newtheorem{definition}[theorem]{\bf Definition}
\renewcommand*{\backref}[1]{}\renewcommand*{\backrefalt}[4]{\ifcase #1 (\tt not cited)\or (\tt cited on page~#2)\else (\tt cited on pages~#2)\fi}
\def\intR{\mathrm{Int^R}}
\def\intRB{\mathrm{Int}^\mathrm{R}_B}
\def\dim{\mathrm{dim}}
\begin{document}
\selectlanguage{english}
\title[]{On rings of  integer-valued rational functions}
	
\author[M. M. Chems-Eddin]{M. M. Chems-Eddin}
\address{Mohamed Mahmoud CHEMS-EDDIN: Department of Mathematics, Faculty of Sciences Dhar El Mahraz, Sidi Mohamed Ben Abdellah University, Fez,  Morocco}
\email{2m.chemseddin@gmail.com}
 
\author[B. Feryouch]{B. Feryouch}
\address{Badr Feryouch: Department of Mathematics, Faculty of Sciences Dhar El Mahraz, Sidi Mohamed Ben Abdellah University, Fez,  Morocco}
\email{badr.feryouch@usmba.ac.ma}
 
\author[H. Mouanis]{H.  Mouanis}
\address{Hakima  Mouanis: Department of Mathematics, Faculty of Sciences Dhar El Mahraz, Sidi Mohamed Ben Abdellah University, Fez,  Morocco}
\email{hmouanis@yahoo.fr}

\author[A. Tamoussit]{A. Tamoussit}
\address{Ali TAMOUSSIT: Department of Mathematics, The Regional Center for Education and Training Professions Souss-Massa,  Inezgane, Morocco}
\email{a.tamoussit@crmefsm.ac.ma ; tamoussit2009@gmail.com}

\subjclass[2020]{13F05, 13F20, 13B30, 16D40.}
\keywords{Integer-valued  rational functions, prime ideals, (faithfully) flat modules.}
	
\maketitle
\begin{abstract} 
Let $D\subseteq B$ be an extension of integral domains and  $E$ a subset of the quotient field of $D$.  We introduce the ring of \textit{$D$-valued $B$-rational functions on $E$}, denoted by $\intRB(E,D)$, which naturally extends the concept of integer-valued polynomials, defined as $\intRB(E,D):=\lbrace f \in B(X);\; f(E)\subseteq D\rbrace.$ {The notion of $\intRB(E,D)$ boils down to the usual notion of integer-valued rational functions when the subset $E$ is infinite}. In this paper, we aim to investigate various properties of these rings, such as prime ideals, localization, and the module structure. Furthermore, we study the transfer of some ring-theoretic properties from $\intR(E,D)$ to $D$. 
\end{abstract}

\section*{Introduction}
Let $D$ be an integral domain and $K$ its quotient field. The set denoted and defined by:   
\begin{eqnarray}\label{intD}
\mathrm{Int}(D):= \{f\in K[X];\;  f(D)\subseteq D\},  
\end{eqnarray}
is known to be a commutative $D$-algebra  and is called \textit{the ring of integer-valued polynomials over} $D$. The study of these rings dates back to 1919, when P\'{o}lya~\cite{P19} and Ostrowski~\cite{O19} introduced and  investigated $\mathrm{Int}(\mathcal{O}_K)$,  where $\mathcal{O}_K$ denotes the ring of integers of a number field $K$. This investigation gave rise to an interesting subfield of Algebraic Number Theory called P\'{o}lya Theory, which remains a very active area of research.  Later, in 1970, Cahen and Chabert generalized  these rings, by considering an arbitrary integral domain $D$ instead of $\mathcal{O}_K$ (cf. \eqref{intD}). It is noteworthy  that the theory of these rings plays an important role in the Multiplicative Ideal Theory and has led to some very surprising results. Particularly, it is a very rich source of examples and counterexamples.

\medskip

In 1991, Anderson \textit{et al.} \cite{AAZ91} treated the ring of \textit{$D$-valued $B$-polynomials}, defined as follows:  $$\mathrm{Int}_B(D):=\{f\in B[X];\; f(D)\subseteq D\},$$  where $D\subseteq B$ is an extension of integral domains, as a generalization of rings of integer-valued polynomials.  Thereafter, in 1993, Cahen~\cite{C93} considered another generalization of $\mathrm{Int}(D)$, namely, the ring of \textit{integer-valued polynomials over a subset} $E$ of $K$,  defined by: $$\mathrm{Int}(E,D):=\{f\in K[X];\; f(E)\subseteq D\},$$ where $D$ is an integral domain with quotient field $K$.  A natural generalization of these rings is defined by: $$\intR(E,D):=\{f\in K(X);\; f(E)\subseteq D\},$$	 which is called \textit{the ring of integer-valued rational functions on} $E$ \textit{over} $D.$ The ring $\mathrm{Int^R}(E,D)$ is investigated in  \cite[Chapter X]{C97}, and recently, in 2023, it was studied by B. Liu in  his Ph.D. thesis \cite{BL23}.

\medskip

In 2020, the fourth named author introduced the concept of \textit{$D$-valued $B$-polynomials on $E$} in \cite{AT2021}. This ring is formally defined as:
$$\mathrm{Int}_B(E,D):=\{f\in B[X];\; f(E)\subseteq D\},$$
where $D \subseteq B$ represents an extension of integral domains, and $E$ is a subset of the quotient field of $D$. These rings are considered to be  a generalization of both $\mathrm{Int}_B(D)$ and $\mathrm{Int}(E,D)$. A recent study investigating various properties of these rings is presented in \cite{CT23}.

\medskip

Let us now focus on giving a quick review of the studies established concerning the ring of integer-valued rational functions. 
The study of these rings started in 1978 with  P.-J. Cahen in his work \cite{C78}, where he introduced and studied them. Specifically, he showed that for any discrete valuation domain $V$ with quotient field $K$ and any subset $E$ of $K$, the ring $\intR(E,V)$ is a localization of $\intR(K,V)$ and it is  Pr\"ufer.  Additionally, he  also studied its prime spectrum. Thereafter, in 1991,  A. Prestel and   C. Rippol \cite{PR91} provided a remarkable result. Explicitly,  they proved that:  for a field $K$ and a non-trivial valuation ring $V$ of $K$ with $\mathfrak{m}$ as its maximal ideal, the equality $\intR(V) = \left(1 + \mathfrak{m}\mathrm{Int}(V)\right)^{-1}\mathrm{Int}(V)$ holds if and only if the completion of $(K,V)$ is either locally compact or algebraically closed. In 1994, K.A. Loper \cite{Lop94} demonstrated that $\intR(D)$ is a Pr\"ufer domain for any monic Pr\"ufer domain $D$. Three years after,  Cahen and  Chabert published the book  \cite{C97} entitled {\it Integer-Valued Polynomials}, devoting  its tenth chapter to the study of the ring $\intR(E,D)$.  Shortly after, in 1998, Cahen and Loper \cite{CL98} were interested in the  classification of integral domains $D$ such that $\intR(E,D)$ is either Pr\"ufer or Bézout, with special attention given to the case  $D=V$ is a valuation domain. In the same year, Gabelli and  Tartarone in \cite{SF20}  investigated some properties of $\intR(D)$, including localization and the Bézout property. Recently,  in 2022, M.H. Park  \cite{MP22} investigated the problem of when $\intR(E,D)$ is a globalized pseudo-valuation domain. Among other things, she showed that if $D$ is a pseudo-valuation domain with associated valuation domain $V$ and $E$ is a precompact subset of the quotient field $K$, then $\intR(E,D)$  is a globalized pseudo-valuation domain with associated Pr\"ufer domain $\intR(E,V)$, and moreover $\intR(E,D)$ is a localization of $\mathrm{Int}(E,D)$. Subsequently,  in 2023,  B. Liu in his Ph.D. thesis \cite{BL23} investigated many questions concerning these rings, and more precisely whether $\intR(V)$ is a   Pr\"ufer domain or not for valuation domains $V$.  Despite the many mouth of studies concerning these rings, the community still in need to invest  more and more efforts for studying and discovering the rings $\intR(E,D)$. Surprisingly, till now, no investigation has been conducted on the module structure of $\intR(E,D)$ as either a $D$-module or a $D[X]$-module.
    
\medskip

We note that this ring has been studied in the literature much less than the classical ring of integer-valued polynomials, due to several intrinsic difficulties related to it. Therefore, as a continuation of the previous investigations,  we introduce and then study the ring of \textit{$D$-valued $B$-rational functions on $E$}, which  defined as follows: 	 $$\intRB(E,D):=\{f\in B(X);\;f(E)\subseteq D\},$$
where $D\subseteq B$ is an extension of integral domains, $E$ is a subset of the quotient field of $D$, and {$B(X)$ denotes the set of all rational functions over $B.$} It is clear that $\mathrm{Int}_B(E,D)\subseteq\intRB(E,D)\subseteq B(X),$ and that the ring $\intRB(E,D)$ is an extension of all previously defined rings.  When $B$ is the quotient field of $D$, we simply write $\intR(E,D)$ (resp.,  $\intR(D)$) for $\intRB(E,D)$ (resp., $\intRB(D,D)$ and  $\intRB(D)$).  Notably, as proved in the first section, $\intRB(E,D)$ and $\intR(E,D)$ coincide when $E$ is infinite. 
	 
\medskip	 	
	 
We now give a brief description of the content of this paper. In Section \ref{sec2}, we start by a first investigation of the  properties of the ring $\intRB(E,D)$. Section \ref{sec3} is devoted to the localization and some ideals of  $\intR(E,D)$. In Section \ref{sec4}, we study the transfer of certain ring-theoretic properties from $\intR(E,D)$ to $D$ (such as  seminormal, ($t$-)finite character, (strong) Mori, P$v$MD, essential, ...). Finally, in Section \ref{sec5},  we give some results on the module structure of  $\intRB(E,D)$. Particularly, we are interested in the flatness and the ($w$-)faithful flatness properties.

\medskip

Throughout this paper, $D$ is an integral domain of quotient field $K$, $E$ is a nonempty subset of $K$ and $B$ is an integral domain containing $D$. Additionally, the symbols $\subset$ and $\subseteq$ denote proper containment and large containment, respectively.

\section{Preliminary definitions and properties}\label{sec2}

We start this section by recalling some definitions and notation. Let $D$ be an integral domain with quotient field $K$. A subset $E$ of $K$ is said to be a \textit{fractional subset} of $D$ if  there exists a nonzero element $d$ of $D$ such that $dE\subseteq D.$  For a nonzero fractional ideal $I$ of $D$, we let $I^{-1}:=\{x\in K;\; xI\subseteq D\}$. On $D$ the $v$-\textit{operation} is defined by $I_v:=(I^{-1})^{-1}$, the $t$-\textit{operation} is defined by $I_t:=\bigcup J_v$, where $J$ ranges over the set of all nonzero finitely generated ideals contained in $I$; and the $w$-\textit{operation} is defined by $I_w:=\{x\in K;\;xJ\subseteq I$ for some nonzero finitely generated ideal $J$ of $D$ with $J^{-1}=D\}$. A nonzero ideal $I$ of $D$ is \textit{divisorial} (or $v$-\textit{ideal}) (resp., $t$-\textit{ideal}, $w$-\textit{ideal}) if $I_v=I$ (resp., $I_t=I$, $I_w=I$).  In general, for each nonzero fractional ideal $I$ of $D$, we have the inclusions $I\subseteq I_w\subseteq I_t\subseteq I_v,$ and then $v$-ideals are $t$-ideals and $t$-ideals themselves are $w$-ideals. If $\star$ denotes either $t$ or $w$, A $\star$-ideal that is also prime is called a $\star$-\textit{prime ideal}, and a $\star$-\textit{maximal ideal} is a maximal ideal among all $\star$-ideals of $D$ and the set of all $\star$-maximal ideals of $D$ is denoted by $\star$-$\mathrm{Max}(D)$. Moreover, we have $t$-$\mathrm{Max}(D)=w$-$\mathrm{Max}(D)$.  Notice that each height-one prime is $t$-prime and when each $t$-prime ideal of $D$ has height-one we say that $D$ has $t$-\textit{dimension one} which we denote by $t$-$\dim(D)=1$; in this case, we have $t$-$\mathrm{Max}(D)=X^1(D)$, where $X^1(D)$ is the set of all height-one prime ideals of $D$.

\medskip

An integral domain $D$ is said to be a \textit{Pr\"{u}fer $v$-multiplication domain} (for short, P$v$MD)  if $D_\mathfrak{m}$ is a valuation domain for each $t$-maximal ideal $\mathfrak{m}$ of $D$. An integral domain $D$ is said to be a $t$-\textit{almost Dedekind domain} if $D_\mathfrak{m}$ is a DVR (here by a DVR we mean a rank-one discrete valuation domain) for each $t$-maximal ideal $\mathfrak{m}$ of $D$. Trivially,  Krull domains and almost Dedekind domains are $t$-almost Dedekind domains and $t$-almost Dedekind domains are P$v$MDs.  An integral domain $D$ is a \textit{strong Mori domain}  (resp., \textit{Mori domain}) if it satisfies the ascending chain condition  on $w$-ideals (resp., $v$-ideals) of $D$. Clearly, Noetherian domains and Krull domains are strong Mori and strong Mori domains are Mori. An integral domain $D$ is said to be \textit{locally Mori} (resp., \textit{almost Krull}) if any localization of $D$ at a maximal ideal is a Mori (resp., a Krull) domain. Obviously, almost Dedekind domains and Krull domains are almost Krull domains, and almost Krull domains themselves are locally Mori. Lastly, an integral domain $D$ is said to be of \textit{finite character} (resp., $t$-\textit{finite character}), if every nonzero element of $D$ belongs to only finitely many maximal ideals (resp., $t$-maximal ideals) of $D$. It is worth noting that Mori domains have $t$-finite character.

\medskip

From the definition of the  ring of $D$-valued $B$-rational functions on $E$ as stated in the introduction, we deduce directly the following: 
\begin{proposition}\label{Comp}
Let $D_1\subseteq D_2$ be two integral domains with the same quotient field $K$ and  $E\subseteq F$  two nonempty subsets of $K$. Let $D_i\subseteq B_i$ be two extensions of integral domains for  $i\in \{1,2\}$. If $B_1 \subseteq B_2$, then $\mathrm{Int}^\mathrm{R}_{B_1} \left(F,D_1\right) \subseteq  \mathrm{Int}^\mathrm{R}_{B_2}\left(E,D_2\right)$.
\end{proposition}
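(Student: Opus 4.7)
The plan is to verify the containment directly from the definition, by taking an arbitrary $f\in \mathrm{Int}^\mathrm{R}_{B_1}(F,D_1)$ and showing $f\in \mathrm{Int}^\mathrm{R}_{B_2}(E,D_2)$. Unpacking the definitions, I must check two things: first, that $f$ is an element of $B_2(X)$, and second, that $f(e)\in D_2$ for every $e\in E$.

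The first point is an immediate consequence of the inclusion $B_1\subseteq B_2$. Indeed, a ring inclusion of integral domains induces $B_1[X]\subseteq B_2[X]$ and therefore $B_1(X)\subseteq B_2(X)$, since every nonzero element of $B_1[X]$ is already invertible in $B_2(X)$. Hence $f\in B_1(X)$ gives $f\in B_2(X)$.

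For the second point, I would use the hypothesis $f(F)\subseteq D_1$ together with the inclusions $E\subseteq F$ and $D_1\subseteq D_2$. The only conceptual subtlety is to confirm that the evaluation $f(e)$ does not depend on whether one views $f$ in $B_1(X)$ or in $B_2(X)$: picking a representation $f=g/h$ with $g,h\in B_1[X]$ and $h(e)\neq 0$ (which exists because $f$ is defined at $e\in F$ as an element of $B_1(X)$), the element $h(e)$ remains nonzero in $B_2$ because $B_1\hookrightarrow B_2$ is an embedding, and the value $g(e)/h(e)$ is unaffected. Therefore $f(e)\in D_1\subseteq D_2$ for all $e\in E\subseteq F$, completing the argument.

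No step is expected to present a real obstacle; the statement is essentially a monotonicity principle, and the main task is simply to record the chain of inclusions cleanly and to make sure the notation $B_i(X)$ and the evaluation $f\mapsto f(e)$ are compatible with enlarging the coefficient ring.
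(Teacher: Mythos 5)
Your proof is correct and matches the paper's treatment: the paper states this proposition as an immediate consequence of the definition (it offers no written proof), and your argument is exactly the routine verification that is being left implicit — $B_1(X)\subseteq B_2(X)$ together with $f(E)\subseteq f(F)\subseteq D_1\subseteq D_2$. Your extra care about evaluation being independent of the ambient coefficient ring is a reasonable point to record and causes no issue, since $B_i(X)$ is just the rational function field over $\mathrm{Frac}(B_i)$ and nonvanishing of a denominator is preserved under the field extension $\mathrm{Frac}(B_1)\subseteq\mathrm{Frac}(B_2)$.
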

	 
\begin{corollary}The following statements are equivalent:
\begin{enumerate}[$ (1)$]
\item $E\subseteq D$,
\item $\intRB(D) \subseteq \intRB(E,D)$,
\item $D[X] \subseteq\intRB(E,D)$.
\end{enumerate}
\end{corollary}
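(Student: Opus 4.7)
The plan is to prove the cyclic chain of implications $(1)\Rightarrow(2)\Rightarrow(3)\Rightarrow(1)$; each step amounts to unwinding the definitions and invoking Proposition \ref{Comp}, so the argument should be short.

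For $(1)\Rightarrow(2)$, I would simply apply Proposition \ref{Comp} with $D_1=D_2=D$, $B_1=B_2=B$, and $F=D$. The hypothesis $E\subseteq D=F$ then gives $\intRB(D,D)\subseteq \intRB(E,D)$, which is exactly $\intRB(D)\subseteq \intRB(E,D)$.

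For $(2)\Rightarrow(3)$, I would note that the inclusion $D[X]\subseteq \intRB(D)$ always holds, since any $f\in D[X]$ satisfies $f(D)\subseteq D\subseteq B(X)$. Combining with the hypothesis yields $D[X]\subseteq \intRB(D)\subseteq \intRB(E,D)$.

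For $(3)\Rightarrow(1)$, I would test the assumed inclusion on the identity polynomial: since $X\in D[X]\subseteq \intRB(E,D)$, by definition of $\intRB(E,D)$ one has $X(E)=E\subseteq D$, which is $(1)$.

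No step presents a real obstacle; the statement is essentially a direct consequence of Proposition \ref{Comp} together with the trivial fact that $D[X]\subseteq \intRB(D)$ and the observation that evaluating the monomial $X$ on $E$ recovers $E$ itself.
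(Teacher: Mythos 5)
Your proof is correct and follows essentially the same route as the paper: $(1)\Rightarrow(2)$ via Proposition \ref{Comp} with $D_1=D_2=D$, $B_1=B_2=B$, $F=D$; $(2)\Rightarrow(3)$ from the trivial inclusion $D[X]\subseteq\intRB(D)$; and $(3)\Rightarrow(1)$ by evaluating the polynomial $X$ on $E$.
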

\begin{proof} 
(1) $\Rightarrow$ (2) Just take $D_1=D_2=D$, $B_1=B_2=B$ and $F=D$ in the previous proposition. 

(2) $\Rightarrow$ (3) It  follows from the trivial inclusion $D[X]\subseteq\intRB(D)$.

(3) $\Rightarrow$ (1) Assume that  $D[X] \subseteq\intRB(E,D)$. As the polynomial $\psi(X)=X$ belongs to $D[X]$, we deduce that $\psi \in \intRB(E,D)$, and hence $\psi(e)\in D$ for each $e\in E$. Therefore, $E\subseteq D$.
\end{proof}
	 
Notice that the previous corollary says  that $\intRB(E,D)$ is an overring of $D[X]$ if and only if $E$ is contained in $D$.

\medskip
	 
We next rewrite \cite[Proposition X.1.4]{C97} as follows: 
\begin{proposition}\label{Prop1.1}
If $E$ is infinite, then, $\intR(E,D)=\mathrm{Int}^\mathrm{R}_L(E,D)$, for any field extension $L$ of $K$.
\end{proposition}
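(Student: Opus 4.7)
The plan is to show two inclusions. The inclusion $\intR(E,D) \subseteq \mathrm{Int}^\mathrm{R}_L(E,D)$ is immediate, since $K(X) \subseteq L(X)$ and the evaluation condition $f(E)\subseteq D$ is unchanged. So the substance lies in proving $\mathrm{Int}^\mathrm{R}_L(E,D) \subseteq \intR(E,D)$, which amounts to the following key lemma: \emph{if $f \in L(X)$ takes values in $K$ at infinitely many points of $K$, then $f \in K(X)$.}

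To establish this lemma, I would take $f \in \mathrm{Int}^\mathrm{R}_L(E,D)$ and write $f = p/q$ with $p,q \in L[X]$ coprime in $L[X]$ (which is a PID). Let $V$ be the finite-dimensional $K$-subspace of $L$ spanned by all coefficients appearing in $p$ and $q$, and choose a $K$-basis $v_1,\dots,v_n$ of $V$. Decomposing each coefficient along this basis gives polynomials $p_i, q_i \in K[X]$ with
\[
p \;=\; \sum_{i=1}^n v_i\, p_i, \qquad q \;=\; \sum_{i=1}^n v_i\, q_i.
\]
Now for each $e \in E$ with $q(e)\neq 0$, the value $d:=f(e)$ lies in $D\subseteq K$, so the identity $p(e)-d\,q(e)=0$ in $L$ reads $\sum_i v_i\bigl(p_i(e)-d\,q_i(e)\bigr)=0$. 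Since the coefficients belong to $K$ and the $v_i$ are $K$-linearly independent, this forces $p_i(e)=d\,q_i(e)$ for every $i$.

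Next, pick any index $j$ with $q_j\neq 0$ (such $j$ exists because $q\neq 0$). The previous relations imply that for all but finitely many $e\in E$,
\[
p_i(e)\,q_j(e)\;-\;p_j(e)\,q_i(e)\;=\;0 \qquad \text{for every } i.
\]
Because $E$ is infinite, each polynomial $p_iq_j-p_jq_i \in K[X]$ has infinitely many roots and must vanish identically. Therefore $p_i q_j = p_j q_i$ in $K[X]$, and multiplying $p$ by $q_j$ yields
\[
q_j\, p \;=\; \sum_i v_i\, q_j p_i \;=\; \sum_i v_i\, p_j q_i \;=\; p_j\, q,
\]
so $f = p/q = p_j/q_j \in K(X)$. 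Combined with $f(E)\subseteq D$, this places $f$ in $\intR(E,D)$, completing the inclusion.

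The main obstacle is the linear-algebra/identity step: extracting $K$-valued equations from one $L$-valued equation via $K$-linear independence of the basis $\{v_i\}$, and then upgrading the pointwise identities $p_i(e)q_j(e)=p_j(e)q_i(e)$ to the polynomial identity $p_iq_j=p_jq_i$. The infinitude of $E$ is used at exactly this last point, which explains why the hypothesis on $E$ is essential and why the statement can fail when $E$ is finite. Once these identities are in hand, the rest is a clean bookkeeping step producing the single representation $f = p_j/q_j$ over $K$.
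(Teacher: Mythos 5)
Your proof is correct. Note that the paper does not actually prove this proposition: it is stated as a rewriting of \cite[Proposition X.1.4]{C97}, so your argument supplies a self-contained proof of the cited fact rather than paralleling an argument in the text. Your route — the easy inclusion $\intR(E,D)\subseteq\mathrm{Int}^\mathrm{R}_L(E,D)$, then descent for the reverse inclusion by expanding the coefficients of $p$ and $q$ over a finite $K$-basis $v_1,\dots,v_n$ of their $K$-span, extracting the coordinatewise equations $p_i(e)=f(e)\,q_i(e)$ from $K$-linear independence, and promoting the pointwise identities $p_iq_j-p_jq_i=0$ to polynomial identities because a nonzero polynomial over $K$ has only finitely many roots — is exactly the kind of linear-algebra descent that underlies the reference, and it correctly isolates where the infinitude of $E$ is used. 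Two small observations: the coprimality of $p$ and $q$ in $L[X]$ is never used and can be dropped; and it is worth saying explicitly that since $q\neq 0$ has only finitely many roots, infinitely many $e\in E$ with $q(e)\neq 0$ remain available (you use this implicitly in the phrase ``for all but finitely many $e\in E$''). With these cosmetic points addressed, the argument is complete.
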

We can extend this proposition to any extension $B$ of $D$. Unlike the well-known case of integer-valued polynomials, where $\mathrm{Int}_B(E,D)\neq \mathrm{Int}(E,D)$ in general (for instance, see Example \ref{ExamD-ring}), the following result shows that $\intRB(E,D)$ and $\intR(E,D)$ coincide when $E$ is infinite. To do this, we need the following lemma.

\begin{lemma}\label{nwe}
If $L$ denotes the quotient field of $B,$ then $B(X)=L(X)$, and hence $\intRB(E,D)= \mathrm{Int}^\mathrm{R}_{L}(E,D).$ 
\end{lemma}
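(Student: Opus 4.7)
The plan is to reduce the lemma to the standard fact that the field of rational functions does not change when one passes from an integral domain to its quotient field. Concretely, I will argue that $B(X) = L(X)$ as subsets of the common ambient field, and then the second equality will be immediate from the definition of $\intRB(E,D)$.

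First I would clarify the notation: by $B(X)$ the authors mean the quotient field of the polynomial ring $B[X]$, i.e.\ the set of all rational functions $P/Q$ with $P, Q \in B[X]$ and $Q \neq 0$. Since $B \subseteq L$, we have $B[X] \subseteq L[X]$, which gives the inclusion $B(X) \subseteq L(X)$ directly. For the reverse inclusion, take $f = P/Q \in L(X)$ with $P, Q \in L[X]$. Write the coefficients of $P$ and $Q$ as fractions $b_i/s_i$ with $b_i \in B$ and $s_i \in B \setminus \{0\}$, and let $s \in B \setminus\{0\}$ be a common denominator of all these coefficients. Then $sP, sQ \in B[X]$, and
\[
f \;=\; \frac{P}{Q} \;=\; \frac{sP}{sQ} \;\in\; B(X).
\]
This establishes $L(X) \subseteq B(X)$, and hence $B(X) = L(X)$.

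For the second assertion, once we know $B(X) = L(X)$, the definition
\[
\intRB(E,D) \;=\; \{f \in B(X);\; f(E)\subseteq D\}
\]
becomes literally identical to
\[
\mathrm{Int}^{\mathrm{R}}_{L}(E,D) \;=\; \{f \in L(X);\; f(E)\subseteq D\},
\]
so the two rings coincide.

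The only mild subtlety, and therefore the main (small) obstacle, is a bookkeeping one: one should verify that evaluation at points of $E$ is well defined and gives the same value in both presentations $f=P/Q = sP/sQ$, which is clear because multiplying numerator and denominator by a nonzero constant $s \in B$ does not change the value of the rational function at any point where it is defined. Apart from this check, the argument is a direct application of the universal property of localization applied to the multiplicative set $B \setminus \{0\} \subseteq B[X]$.
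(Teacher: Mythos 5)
Your proof is correct and follows essentially the same route as the paper: the inclusion $B(X)\subseteq L(X)$ is immediate, and the reverse inclusion is obtained by clearing the denominators of the coefficients, after which the equality of the two rings of integer-valued rational functions is purely definitional. If anything, your choice of a common denominator $s\in B\setminus\{0\}$ is the more careful one, since the paper's printed proof takes the clearing elements in $D$, whereas coefficients of polynomials over $L=\mathrm{Frac}(B)$ are in general only guaranteed to admit denominators in $B$.
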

\begin{proof}
The inclusion $B(X)\subseteq L(X)$ is clear. For the reverse inclusion, let $\varphi=\frac{f}{g} \in L(X)$ with $f$ and $g$ are in $L[X]$. Then, there exist two nonzero elements $u$ and $v$ of $D$ such that $uf\in B[X]$ and $vg\in B[X],$ and hence $\varphi=\frac{uvf}{uvg}\in B(X).$ Thus, $L(X)\subseteq B(X)$, and this completes the proof. 
\end{proof}

\begin{proposition}\label{d1}
If $E$ is infinite, then $\intRB(E,D)=\intR(E,D)$. In particular, $\intRB(D)=\intR(D)$.
\end{proposition}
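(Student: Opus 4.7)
The plan is to chain together the two preparatory results that have just been stated. Let $L$ denote the quotient field of $B$. Since $D \subseteq B$, every element of $K$ (being a ratio of elements of $D$) lies in $L$, so $L$ is a field extension of $K$. This is the only new observation I need; everything else is already on the page.

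With that in hand, the proof is essentially a two-line chase. First I would invoke Lemma \ref{nwe} to rewrite
\[
\intRB(E,D) \;=\; \mathrm{Int}^\mathrm{R}_{L}(E,D).
\]
Then, since $E$ is infinite and $L$ is a field extension of $K$, Proposition \ref{Prop1.1} applies and gives
\[
\mathrm{Int}^\mathrm{R}_{L}(E,D) \;=\; \intR(E,D).
\]
Combining the two equalities yields $\intRB(E,D) = \intR(E,D)$, which is the desired identity.

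For the ``In particular'' clause, I would simply specialize to $E = D$. Assuming the standing convention that $D$ is infinite (otherwise $D$ is a finite integral domain, hence a field, and the statement degenerates), $E = D$ is infinite, so the first part of the proposition applies and gives $\intRB(D,D) = \intR(D,D)$, which by the notational convention introduced in the introduction reads $\intRB(D) = \intR(D)$.

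There is no genuine obstacle here: the content is entirely absorbed by Lemma \ref{nwe} (which handles the passage from $B(X)$ to $L(X)$) and by Proposition \ref{Prop1.1} (which handles the enlargement of the target field on an infinite set via a polynomial-identity argument). The only minor thing to check explicitly is that $K \subseteq L$, but this is immediate from $D \subseteq B \subseteq L$ together with the universal property of the quotient field of $D$.
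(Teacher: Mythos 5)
Your proposal is correct and follows essentially the same route as the paper's own proof: apply Lemma \ref{nwe} to identify $\intRB(E,D)$ with $\mathrm{Int}^\mathrm{R}_{L}(E,D)$ for $L$ the quotient field of $B$, then invoke Proposition \ref{Prop1.1} using that $E$ is infinite, and specialize to $E=D$ for the last claim. Your extra remarks (that $K\subseteq L$, and that $D$ must be infinite for the particular case) are harmless refinements of the same argument.
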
	
\begin{proof}
Assume that $E$ is infinite. If $L$ denotes the quotient field of $B,$ it follows from Lemma \ref{nwe} that $\intRB(E,D)= \mathrm{Int}^\mathrm{R}_{L}(E,D),$ and this last ring is simply $\intR(E,D)$ as asserted in Proposition \ref{Prop1.1} because $E$ is infinite. For the particular case, we just take $E=D$ in the proved equality.
\end{proof}

As previously stated in Proposition \ref{d1}, we have $\intRB(D)=\intR(D)$. However, the behavior of $\mathrm{Int}_B(D)$ differs, as illustrated by the next example. To provide such an example, we recall that $D$ is called a $d$-\textit{ring} if $\mathrm{Int^R}(D)=\mathrm{Int}(D)$.

\begin{example}\label{ExamD-ring}
Let us consider the ring $\mathbb{Z}[\frac13]$ as an overring of $\mathbb{Z}.$ It is known that $\mathbb{Z}$ is a $d$-ring by \cite[Example VII.2.2]{C97}, so $\mathrm{Int}^\mathrm{R}_{\mathbb{Z}[\frac13]}(\mathbb{Z})= \intR(\mathbb{Z})=\mathrm{Int}(\mathbb{Z})$ and we also have $\mathrm{Int}(\mathbb{Z})\neq \mathrm{Int}_{\mathbb{Z}[\frac13]}(\mathbb{Z})$ because the polynomial $\psi(X)=\frac{X(X+1)}{2}$  lies in $  \mathrm{Int}(\mathbb{Z})$ but not in $\mathrm{Int}_{\mathbb{Z}[\frac13]}(\mathbb{Z})$.
\end{example}  
  
  The following result gives a necessary condition on $E$ for $\mathrm{Int}_B(E,D)$ (resp., $\intR(E,D)$) to contain a non-constant polynomial.
 \begin{proposition}\label{ProNCP}
If $D^{\prime}$ denotes the integral closure  of $D$ and $B$ is an overring of $D$, then each of the following statements implies the next: 
\begin{enumerate}[$(1)$]
\item  $\mathrm{Int}_B(E,D)$ contains a non-constant polynomial;
\item $\intR(E,D)$ contains a non-constant polynomial;
\item  $E$ is a fractional subset of $D^{\prime}$.
\end{enumerate}
\end{proposition}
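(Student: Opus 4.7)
The plan is to handle the two implications separately. The first, (1) $\Rightarrow$ (2), should be essentially tautological once the hypothesis ``$B$ is an overring of $D$'' is unpacked. The second, (2) $\Rightarrow$ (3), is the substantive step and relies on the classical monic-rescaling trick: rescaling the variable by the leading coefficient converts any polynomial with $D$-coefficients into a monic one, turning each evaluation point into an algebraic integer over $D$.

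For (1) $\Rightarrow$ (2), the assumption that $B$ is an overring of $D$ gives $D \subseteq B \subseteq K$, whence $B[X] \subseteq K[X] \subseteq K(X)$. Thus any non-constant polynomial $f \in \mathrm{Int}_B(E,D)$ is automatically a non-constant element of $K[X] \subseteq K(X)$ with $f(E) \subseteq D$, so $f \in \intR(E,D)$.

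For (2) $\Rightarrow$ (3), I would begin with a non-constant polynomial $f(X) = \sum_{i=0}^n a_i X^i \in K[X]$ (with $a_n \neq 0$ and $n \geq 1$) satisfying $f(E) \subseteq D$. First I would clear denominators: choose $c \in D \setminus \{0\}$ so that $g(X) := c\,f(X) = \sum_{i=0}^n b_i X^i$ lies in $D[X]$, and set $b := b_n = c a_n \in D \setminus \{0\}$. Then for every $e \in E$, one has $g(e) = c f(e) \in D$, hence $b^{n-1} g(e) \in D$. Using the identities $b_i b^{n-1} e^i = (b_i b^{n-1-i})(be)^i$ for $0 \leq i \leq n-1$ and $b_n b^{n-1} e^n = (be)^n$, I would rewrite the equation $b^{n-1}g(e) = \sum_{i=0}^n b_i b^{n-1} e^i$ as
\[
(be)^n + \sum_{i=0}^{n-1}(b_i b^{n-1-i})(be)^i - b^{n-1} g(e) = 0,
\]
a monic polynomial equation in $be$ with all coefficients in $D$. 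Hence $be \in D'$, so $bE \subseteq D'$, which shows $E$ is a fractional subset of $D'$ (with witness $b \in D \setminus \{0\} \subseteq D'$).

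The main source of difficulty, such as it is, lies only in the bookkeeping of the displayed identity: one has to check that $b_i b^{n-1-i}$ and $b^{n-1}g(e)$ genuinely sit in $D$ (which is immediate from $n-1-i\geq 0$ and $b,b_i,g(e)\in D$). Beyond that, I foresee no real obstacles, as both implications are standard manipulations. Note also that there is no need for $B$ to be an overring in the implication (2) $\Rightarrow$ (3); the hypothesis on $B$ is only required to identify $B[X]$ as a subset of $K(X)$ in the first implication.
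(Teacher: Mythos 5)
Your proof is correct and follows essentially the same route as the paper: implication (1) $\Rightarrow$ (2) via the inclusion $\mathrm{Int}_B(E,D)\subseteq\intR(E,D)$ (using $B\subseteq K$), and (2) $\Rightarrow$ (3) by clearing denominators and multiplying $f(e)$ by the $(n-1)$-st power of the leading coefficient to exhibit a monic equation for $be$ over $D$, giving $bE\subseteq D'$. The bookkeeping you flag is exactly what the paper does, so there is nothing to add.
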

	 
\begin{proof}
(1) $\Rightarrow$ (2) follows from the inclusion $\mathrm{Int}_B(E,D) \subseteq \intR(E,D)$.

(2) $\Rightarrow$ (3) 	Let $f=\sum_{0\leq i\leq n}a_{i}X^{i}$ be a polynomial of degree $n\geq 1$ in $\intRB(E,D)$. By multiplying $f$ by a common denominator of its coefficients, we can always suppose that $f\in D[X]$. For each $e\in E$, we have $a_{n}^{n-1}f(e)\in D$. From the equality 
$a_{n}^{n-1}f(e)=(a_{n}e)^{n}+a_{n-1}(a_{n}e)^{n-1}+\dots +a_{0}(a_{n})^{n-1},$ we deduce that $(a_{n}e)^{n}+a_{n-1}(a_{n}e)^{n-1}+\dots +a_{0}(a_{n})^{n-1}-a_{n}^{n-1}f(e)=0,$ which means that $a_ne\in D^{\prime}.$ Hence, $a_{n}E\subseteq D^{\prime}$, and thus $E$ is a fractional subset of $D^{\prime}$.
\end{proof}
\begin{corollary}\label{ry}
If $D$ is integrally closed, then  $\intR(E,D)$ contains a non-constant polynomial if and only if  $E$ is a fractional subset of $D$.
\end{corollary}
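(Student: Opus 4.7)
The corollary should fall out directly from Proposition \ref{ProNCP}, so the plan is simply to assemble the two directions, observing that when $D$ is integrally closed one has $D' = D$.

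For the forward direction, I would suppose $\intR(E,D)$ contains a non-constant polynomial. By Proposition \ref{ProNCP}, the implication (2) $\Rightarrow$ (3) (applied with any overring $B$ of $D$, for instance $B = K$) yields that $E$ is a fractional subset of $D'$. Since $D$ is integrally closed, $D' = D$, so $E$ is a fractional subset of $D$.

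For the reverse direction, I would unpack the definition of fractional subset. If $E$ is a fractional subset of $D$, there exists a nonzero $d \in D$ with $dE \subseteq D$. The polynomial $f(X) = dX$ then lies in $D[X]$ and satisfies $f(e) = de \in D$ for every $e \in E$. Hence $f \in \mathrm{Int}(E,D) \subseteq \intR(E,D)$, and $f$ is clearly non-constant.

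There is essentially no obstacle here: the nontrivial content is already in Proposition \ref{ProNCP}, and the role of the integrally closed hypothesis is only to identify $D'$ with $D$. The one point worth being careful about is that the converse direction does not appear in Proposition \ref{ProNCP} (it only states chains of implications), so it must be proved separately via the explicit witness $f(X) = dX$, but this is immediate.
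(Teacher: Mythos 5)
Your proof is correct and matches the argument the paper intends: the forward direction is exactly Proposition \ref{ProNCP} with $D'=D$, and the converse is the immediate witness $f(X)=dX$ coming from $dE\subseteq D$ (the same construction the paper uses later for degree-one polynomials). Nothing is missing.
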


The next example illustrates that it is possible to have $\mathrm{Int}_B(E,D)=D$ while $\intR(E,D)\neq D$ for a non-fractional subset $E$ of an integrally closed domain $D$.

\begin{example}
Consider the set  $E=\lbrace 1,\frac{1}{2},\frac{1}{4},\dots \rbrace$. Since $\mathbb{Z}$ is integrally closed and $E$ is not a fractional subset of $\mathbb{Z},$ it follows from Proposition \ref{ProNCP} that $\mathrm{Int}_B(E,\mathbb{Z})=\mathbb{Z},$ while $\intR(E,\mathbb{Z})\neq \mathbb{Z}$ because $\varphi(X)=\frac{1}{X}$ lies in $\intR(E,\mathbb{Z})$ but not in $\mathbb{Z}$.
\end{example}

Now, we provide a necessary and sufficient condition on the subset $E$ of $K$ for $\intR(E,D)$ to contain a  polynomial of degree one. Explicitly, we show that $\intR(E,D)$ contains a polynomial of degree one if and only if $E$ is a fractional subset of $D$.
\begin{proposition}
For any overring $B$ of $D$, the following assertions are equivalent:
\begin{enumerate}[$(1)$]
\item  $\mathrm{Int}_B(E,D)$ contains a polynomial of degree one;
\item $\intR(E,D)$ contains a polynomial of degree one;
\item  $E$ is a fractional subset of $D$.
\end{enumerate}
\end{proposition}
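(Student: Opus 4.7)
The plan is to prove the three implications cyclically: (1) $\Rightarrow$ (2) $\Rightarrow$ (3) $\Rightarrow$ (1). The overall shape mirrors Proposition~\ref{ProNCP}, but the degree-one restriction makes the argument much tighter, so I would not need the trick with the leading coefficient raised to a power; clearing denominators once is enough.

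The first implication is essentially free: since $B$ is an overring of $D$, we have $B \subseteq K$, hence $B[X] \subseteq K[X] \subseteq K(X)$, and the inclusion $\mathrm{Int}_B(E,D) \subseteq \intR(E,D)$ (which comes, e.g., from Proposition~\ref{Comp}) sends any degree-one polynomial in the former to a degree-one polynomial in the latter.

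For (2) $\Rightarrow$ (3), suppose $f(X) = aX + b \in \intR(E,D)$ with $a \in K \setminus \{0\}$ and $b \in K$. I would pick a common denominator $d \in D \setminus \{0\}$ such that $da \in D$ and $db \in D$. Then for every $e \in E$,
\begin{equation*}
(da)e \;=\; d f(e) - db \in D,
\end{equation*}
because $f(e) \in D$ forces $df(e) \in D$. Hence $(da)E \subseteq D$ with $da \in D \setminus \{0\}$, so $E$ is a fractional subset of $D$.

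Finally, for (3) $\Rightarrow$ (1), if $d \in D \setminus \{0\}$ satisfies $dE \subseteq D$, then $f(X) = dX$ lies in $D[X] \subseteq B[X]$, is of degree one, and satisfies $f(e) = de \in D$ for every $e \in E$; thus $f \in \mathrm{Int}_B(E,D)$. The whole argument is elementary — the only place to be slightly careful is the clearing-of-denominators step in (2) $\Rightarrow$ (3), where one must remember that the constant term $b$ need not already be in $D$, so it is the combination $df(e) - db$ rather than $f(e) - b$ that actually lies in $D$.
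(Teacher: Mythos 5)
Your proposal is correct and follows essentially the same route as the paper: the trivial inclusion $\mathrm{Int}_B(E,D)\subseteq\intR(E,D)$ for (1) $\Rightarrow$ (2), clearing denominators once and using $cf(e)\in D$ together with $cb\in D$ to get $cae\in D$ for (2) $\Rightarrow$ (3), and the polynomial $dX$ for (3) $\Rightarrow$ (1). Your explicit remark about isolating $df(e)-db$ is exactly the step the paper performs implicitly, so there is nothing to add.
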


\begin{proof}
(1) $\Rightarrow$ (2) This follows directly from the inclusion $\mathrm{Int}_B(E,D) \subseteq \intR(E,D)$.

(2) $\Rightarrow$ (3) Suppose $f(X)=aX+b$ is a polynomial of degree one in $\intR(E,D)$ with coefficients $a$ and $b$ in $K$. Then there exists a nonzero element $c$ in $D$ such that $cf(X)=caX+cb \in D[X]$. For each $e\in E$, $cf(e)\in D$ implies $cae\in D$. Therefore, $E$ is a fractional subset of $D$.

(3) $\Rightarrow$ (1) follows from the existence of a nonzero element $d\in D$ such that $dE\subseteq D$. Consequently, $f(X)=dX\in \mathrm{Int}_B(E,D)$, and this completes the proof.
\end{proof}

\section{Localization and ideals of  {$\intR(E,D)$}}\label{sec3}
	 
This section focuses on studying some (prime) ideals of $\intR(E,D)$ and investigating the behavior $\intR(E,D)$ under localization.

\medskip

We start by stating the following useful lemma.  

\begin{lemma}\label{Lolem}
For any prime ideal $\mathfrak{P}$ of $\intR(E,D)$, we have  $$D_{\mathfrak{P}\cap D}=\intR(E,D)_{\mathfrak{P}} \cap K.$$
\end{lemma}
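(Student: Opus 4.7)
The plan is to prove both inclusions, writing $\mathfrak{p}:=\mathfrak{P}\cap D$ and $R:=\intR(E,D)$ for brevity.

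The inclusion $D_\mathfrak{p}\subseteq R_\mathfrak{P}\cap K$ is immediate: for any $x=a/s$ with $a\in D$ and $s\in D\setminus\mathfrak{p}$, one has $s\in D\subseteq R$ and $s\notin\mathfrak{P}$ (since $s\in D$ and $s\notin\mathfrak{p}=\mathfrak{P}\cap D$), so $x=a/s$ lies in $R_\mathfrak{P}\cap K$.

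For the reverse inclusion, pick $x\in R_\mathfrak{P}\cap K$; the aim is to produce a constant $d\in D\setminus\mathfrak{p}$ with $dx\in D$, which will force $x=(dx)/d\in D_\mathfrak{p}$. Write $x=f/g$ in $K(X)$ with $f,g\in R$ and $g\notin\mathfrak{P}$. Since $x$ is constant, the identity $f=xg$ holds in $K(X)$, so for each $e\in E$ at which $g$ is defined one has $g(e)\in D$ and $xg(e)=f(e)\in D$; hence every value $g(e)$ lies in the conductor $(D:_{D}x):=\{d\in D:dx\in D\}$. It therefore suffices to exhibit an $e\in E$ with $g(e)\notin\mathfrak{p}$, since then $d:=g(e)$ is the required witness.

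The main obstacle is guaranteeing the existence of such an $e$. The forward-looking plan is to exploit the containment $\mathfrak{p}R\subseteq\mathfrak{P}$ (valid because $\mathfrak{p}\subseteq\mathfrak{P}$ and $\mathfrak{P}$ is an ideal of $R$), so that $g\notin\mathfrak{P}$ forces $g\notin\mathfrak{p}R$; combined with the equality $R\cap K=D$ (see Lemma~\ref{nwe} and Proposition~\ref{d1}), which yields $\mathfrak{p}R\cap K=\mathfrak{p}$ by a one-line evaluation check, this non-membership should translate into at least one value $g(e)$ outside $\mathfrak{p}$. Converting the ideal-theoretic statement $g\notin\mathfrak{p}R$ into a \emph{single} point $e\in E$ where $g(e)\notin\mathfrak{p}$ is the delicate step; the natural way forward is to represent $g=p/q$ with $p,q\in D[X]$ and argue by contradiction that if the ideal generated by the values $\{g(e):e\in E\}$ were contained in $\mathfrak{p}$, then a content-style analysis of $p$ would force $g$ itself into $\mathfrak{p}R$, contradicting the choice of $g$.
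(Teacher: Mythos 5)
Your first inclusion and the reduction of the reverse inclusion are fine: if $x\in\intR(E,D)_{\mathfrak{P}}\cap K$ is written as $x=f/g$ with $f,g\in\intR(E,D)$ and $g\notin\mathfrak{P}$, then indeed $g(e)\in D$ and $xg(e)=f(e)\in D$ for every $e\in E$, so a single point $e$ with $g(e)\notin\mathfrak{p}$ would give $x=f(e)/g(e)\in D_{\mathfrak{p}}$. The problem is that you never prove such a point exists; you explicitly label it ``the delicate step'' and only sketch a plan. That step is not a technicality: it is the entire content of the lemma. When $\mathfrak{P}$ is a pointed prime $\mathfrak{P}_{\mathfrak{p},a}$ the point $e=a$ works by definition, but the lemma is stated (and later used, e.g.\ for $t$-maximal ideals of $\intR(E,D)$ in Propositions \ref{pvmd} and \ref{t-AD}) for an \emph{arbitrary} prime $\mathfrak{P}$, and for a non-pointed prime the hypothesis $g\notin\mathfrak{P}$ gives no direct information about the values $g(e)$.

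Moreover, the strategy you outline for closing the gap is aimed at a claim that is false in general. Arguing by contradiction from $g(E)\subseteq\mathfrak{p}$, you hope a content-style analysis forces $g\in\mathfrak{p}\,\intR(E,D)$; but the containment $\{\varphi\in\intR(E,D):\varphi(E)\subseteq\mathfrak{p}\}\supseteq\mathfrak{p}\,\intR(E,D)$ is strict for general (non-invertible) $\mathfrak{p}$, exactly as $\mathrm{Int}(E,\mathfrak{p})$ may properly contain $\mathfrak{p}\,\mathrm{Int}(E,D)$, so ``values in $\mathfrak{p}$'' does not force membership in $\mathfrak{p}\,\intR(E,D)$. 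What your contradiction actually requires is the weaker conclusion $g\in\mathfrak{P}$, and this is precisely what cannot be read off from the values of $g$ on $E$ when $\mathfrak{P}$ is not pointed. A way to make the argument work in restricted settings is to use the full conductor $I:=(D:_{D}x)=\bigl((1,x)D\bigr)^{-1}$: every $g$ with $xg\in\intR(E,D)$ satisfies $g(E)\subseteq I$, and when $I$ is invertible (for instance when $D$ is Pr\"ufer, or whenever $I$ is principal) one has $\{\varphi:\varphi(E)\subseteq I\}=I\,\intR(E,D)\subseteq\mathfrak{p}\,\intR(E,D)\subseteq\mathfrak{P}$, contradicting $g\notin\mathfrak{P}$. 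For an arbitrary domain $D$ no such argument is supplied in your proposal, so as written the proof of the inclusion $\intR(E,D)_{\mathfrak{P}}\cap K\subseteq D_{\mathfrak{P}\cap D}$ is incomplete.
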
 
\begin{proof}
This is quite trivial.
\end{proof}
	 
\begin{definition}\label{Pointed}
Using the above notations and assumptions, let $I$ be an ideal of $D$ and $a$ an element of $E$. We define the set $\mathfrak{I}_{I,a}$ as follows:
  $$\mathfrak{I}_{I,a}:=\lbrace \varphi \in \intR(E,D);\; \varphi(a) \in I \rbrace .$$
\begin{enumerate}[$(1)$]   
\item $\mathfrak{I}_{I,a}$ has the structure of an ideal of $\intR(E,D)$.
\item Ideals of $\intR(E,D)$ of this form are called pointed ideals.
\item If $I$ is a prime ideal $\mathfrak{p}$ (resp., maximal ideal $\mathfrak{m}$) of $D$, we use the notation $\mathfrak{P}_{\mathfrak{p},a}$ (resp., $\mathfrak{M}_{\mathfrak{m},a}$) instead of $\mathfrak{I}_{\mathfrak{p},a}$ (resp., $\mathfrak{I}_{\mathfrak{m},a}$) and refer to these as pointed prime ideals (resp., pointed maximal ideals).
\end{enumerate}
\end{definition} 

\begin{proposition}\label{Notation}
For any prime ideal $\mathfrak{p}$ of $D$ and for any element $a$ of $E$, the following statements hold: 
\begin{enumerate}[$(1)$]   
\item $\intR(E,D)/\mathfrak{P}_{\mathfrak{p},a} \simeq D/\mathfrak{p}.$
\item  $\mathfrak{P}_{\mathfrak{p},a}$ is a prime ideal of $\intR(E,D)$ above $\mathfrak{p}$.
\item $\intR(E,D)_{\mathfrak{P}_{\mathfrak{p},a}} \cap K = D_{\mathfrak{p}}$.
\end{enumerate}
\end{proposition}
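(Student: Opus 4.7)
The plan is to realise $\mathfrak{P}_{\mathfrak{p},a}$ as the kernel of a natural evaluation-reduction homomorphism, and then deduce all three assertions simultaneously.

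First, I would introduce the evaluation map at $a\in E$ composed with the canonical projection modulo $\mathfrak{p}$:
\[
\Phi_{\mathfrak{p},a}\colon \intR(E,D)\longrightarrow D/\mathfrak{p},\qquad \varphi\longmapsto \varphi(a)+\mathfrak{p}.
\]
The key observation making this well-defined is that for every $\varphi\in\intR(E,D)$ the value $\varphi(a)$ is, by the very definition of $\intR(E,D)$, an element of $D$ (so in particular $\varphi$ is defined at $a$); routine verification on a common denominator shows that $\varphi\mapsto \varphi(a)$ is a ring homomorphism, hence so is $\Phi_{\mathfrak{p},a}$. Surjectivity is immediate since constant functions $\varphi\equiv d$ with $d\in D$ lie in $\intR(E,D)$ and hit every class of $D/\mathfrak{p}$.

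By construction the kernel of $\Phi_{\mathfrak{p},a}$ is precisely $\mathfrak{P}_{\mathfrak{p},a}$, so the first isomorphism theorem yields (1): $\intR(E,D)/\mathfrak{P}_{\mathfrak{p},a}\simeq D/\mathfrak{p}$. Since $\mathfrak{p}$ is prime, $D/\mathfrak{p}$ is an integral domain, which forces $\mathfrak{P}_{\mathfrak{p},a}$ to be prime; to finish (2) I would check that it lies above $\mathfrak{p}$ by observing that a constant $d\in D$ satisfies $d\in\mathfrak{P}_{\mathfrak{p},a}$ iff $d(a)=d\in\mathfrak{p}$, giving $\mathfrak{P}_{\mathfrak{p},a}\cap D=\mathfrak{p}$.

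Finally, (3) follows at once by combining (2) with Lemma \ref{Lolem}: applying the lemma to $\mathfrak{P}=\mathfrak{P}_{\mathfrak{p},a}$ and using $\mathfrak{P}_{\mathfrak{p},a}\cap D=\mathfrak{p}$ gives $\intR(E,D)_{\mathfrak{P}_{\mathfrak{p},a}}\cap K = D_{\mathfrak{p}}$. The only genuinely delicate step is the first one, namely justifying that evaluation at $a$ descends from $K(X)$ to a well-defined ring homomorphism on $\intR(E,D)$, since distinct fractional representatives of $\varphi\in K(X)$ could a priori be singular at $a$; however, the condition $\varphi(E)\subseteq D$ implicit in the definition guarantees a representative with nonvanishing denominator at $a$, after which all subsequent ring-theoretic computations are formal.
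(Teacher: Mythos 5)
Your proposal is correct and follows essentially the same route as the paper: the paper's proof of (1) is exactly the evaluation-reduction map $\varphi\mapsto\varphi(a)\bmod\mathfrak{p}$, with (2) deduced from (1) and (3) from (2) together with Lemma \ref{Lolem}. You merely spell out details the paper leaves implicit (well-definedness of evaluation at $a$, surjectivity via constants, and $\mathfrak{P}_{\mathfrak{p},a}\cap D=\mathfrak{p}$), all of which are fine.
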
 
	 
\begin{proof}
(1) It suffices to consider the map \[\begin{array}{cccc}
\psi_a:&\intR(E,D)&\rightarrow&D/\mathfrak{p}\\
&\varphi&\mapsto&\psi_a(\varphi)=\varphi(a) \mod(\mathfrak{p}).
\end{array}\]

(2) This is a consequence of the previous statement.

(3) The desired equality follows from statement (2) and Lemma \ref{Lolem}
\end{proof}

\begin{remark}
(1) We note that Proposition \ref{Notation}  justifies the notations $\mathfrak{P}_{\mathfrak{p},a}$ and  $\mathfrak{M}_{\mathfrak{m},a}$ used in the last statement of Definition \ref{Pointed}.

(2) Notably, as mentioned in \cite[page 259]{C97}, integer-valued rational functions do not behave well under localization, even in the Noetherian case. Recently, Liu, in \cite{BL22}, points out that the inclusion $S^{-1}\intR(D) \subseteq \intR(S^{-1} D)$ does not hold in general (see \cite[Example 1.22]{BL22}).
\end{remark}

\begin{proposition}
If $D=\cap_{\mathfrak{p}\in \mathcal{P}}D_{\mathfrak{p}},$ where $\mathcal{P}$ is a nonempty subset  of $\mathrm{Spec}(D)$, then $\intR(E,D)=\cap_{\mathfrak{p}\in \mathcal{P}} \intR(E,D_\mathfrak{p})= \cap_{a\in E}\cap_{\mathfrak{p}\in \mathcal{P}} \intR(E,D)_{\mathfrak{B}_{\mathfrak{p},a}}.$
\end{proposition}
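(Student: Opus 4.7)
The plan is to prove the two claimed equalities separately. For the first, $\intR(E,D) = \bigcap_{\mathfrak{p}\in \mathcal{P}} \intR(E,D_\mathfrak{p})$, the inclusion $\subseteq$ is immediate from Proposition \ref{Comp} applied to $D\subseteq D_\mathfrak{p}$ with the common ambient field $K$. For the reverse inclusion, I take $f\in K(X)$ lying in every $\intR(E,D_\mathfrak{p})$; then for every $e\in E$ one has $f(e)\in \bigcap_{\mathfrak{p}\in\mathcal{P}} D_\mathfrak{p} = D$ by the assumption on $D$, so $f\in \intR(E,D)$.

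For the second equality, $\bigcap_{\mathfrak{p}} \intR(E,D_\mathfrak{p}) = \bigcap_{a\in E}\bigcap_{\mathfrak{p}} \intR(E,D)_{\mathfrak{P}_{\mathfrak{p},a}}$, the inclusion $\subseteq$ is essentially trivial: by the first equality, the left-hand side equals $\intR(E,D)$, and $\intR(E,D)$ is obviously contained in each of its localizations. The substantive inclusion is $\supseteq$. Given $f$ in every $\intR(E,D)_{\mathfrak{P}_{\mathfrak{p},a}}$, I write $f=g/h$ with $g,h\in \intR(E,D)$ and $h\notin \mathfrak{P}_{\mathfrak{p},a}$, which by definition of the pointed prime means $h(a)\notin \mathfrak{p}$; in particular $h(a)\neq 0$, so the evaluation $f(a)=g(a)/h(a)$ is legitimate and lies in $D_\mathfrak{p}$. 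Thus $f(a)\in D_\mathfrak{p}$ for every $a\in E$ and every $\mathfrak{p}\in\mathcal{P}$, so for each fixed $\mathfrak{p}$ we get $f\in \intR(E,D_\mathfrak{p})$, and taking the intersection yields the desired membership. (Alternatively one can just invoke Proposition \ref{Notation}(3) after noting that the relevant rational function $f$ takes values in $\intR(E,D)_{\mathfrak{P}_{\mathfrak{p},a}}\cap K = D_\mathfrak{p}$ at each $a$.)

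I do not foresee a real obstacle here; the only point that requires a little care is the last one, namely justifying that an element of the localized ring $\intR(E,D)_{\mathfrak{P}_{\mathfrak{p},a}}$ can actually be evaluated at $a$. This is handled by observing that a denominator $h$ avoiding $\mathfrak{P}_{\mathfrak{p},a}$ automatically satisfies $h(a)\neq 0$, since $0\in\mathfrak{p}$. Once this evaluation is available, the argument reduces to the pointwise identity $\bigcap_\mathfrak{p} D_\mathfrak{p} = D$ that was the hypothesis of the proposition.
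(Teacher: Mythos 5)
Your proof is correct and follows essentially the same route as the paper: Proposition \ref{Comp} plus $\bigcap_{\mathfrak{p}} D_\mathfrak{p}=D$ for the first equality, and for the second, writing an element of the localization as $g/h$ with $h\notin\mathfrak{P}_{\mathfrak{p},a}$ (so $h(a)\notin\mathfrak{p}$, in particular $h(a)\neq 0$) to deduce that its value at $a$ lies in $D_\mathfrak{p}$. The only cosmetic difference is that you route the conclusion through $f\in\bigcap_{\mathfrak{p}}\intR(E,D_\mathfrak{p})$ and the first equality, while the paper concludes directly that $\psi(E)\subseteq\bigcap_{\mathfrak{p}}D_\mathfrak{p}=D$; these are the same argument.
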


\begin{proof}
By Proposition \ref{Comp}, we have  $\intR(E,D)\subseteq \intR(E,D_\mathfrak{p})$, for each $\mathfrak{p} \in \mathcal{P}$, and then $\intR(E,D) \subseteq \cap_{\mathfrak{p}\in \mathcal{P}} \intR(E,D_\mathfrak{p})$. For the other inclusion, we let $\varphi$ be an element of $\cap_{\mathfrak{p}\in \mathcal{P}} \intR(E,D_\mathfrak{p})$. For each $\mathfrak{p}\in \mathcal{P},$ we have $\varphi(E)\subseteq D_\mathfrak{p}$ and $\varphi \in K(X)$. Hence, $\varphi(E)\subseteq \cap_{\mathfrak{p}\in \mathcal{P}} D_\mathfrak{p}=D$, and thus $\cap_{\mathfrak{p}\in \mathcal{P}} \intR(E,D_\mathfrak{p})\subseteq \intR(E,D)$.

To prove the second equality, we only need to check the reverse inclusion. So, let $\psi \in \cap_{a\in E}\cap_{\mathfrak{p}\in \mathcal{P}} \intR(E,D)_{\mathfrak{P}_{\mathfrak{p},a}}$. Then, for each $\mathfrak{p}\in \mathcal{P}$ and each $a\in E$, $\psi \in \intR(E,D)_{\mathfrak{P}_{\mathfrak{p},a}},$ and hence there exists $\varphi \in \intR(E,D)\backslash \mathfrak{P}_{\mathfrak{p},a}$ such that $\varphi\psi\in \intR(E,D).$ Thus, $\varphi(a) \notin \mathfrak{p}$ and $\varphi(a) \psi(a) \in D$, and therefore $\psi(a) \in D_\mathfrak{p}$. Consequently, for each $\mathfrak{p}\in \mathcal{P}$, $\psi(E)\subseteq D_\mathfrak{p}$, which implies that $\psi(E)\subseteq \cap_{\mathfrak{p}\in \mathcal{P}} D_\mathfrak{p}=D$, that is, $\psi \in \intR(E,D)$.
\end{proof}

\begin{corollary}\label{reprLoc}
We always have the following:  
\begin{enumerate}[$(1)$]
\item $\intR(E,D)=\cap_{\mathfrak{p}\in\mathrm{Spec}(D)} \intR(E,D_\mathfrak{p})=\cap_{a\in E}\cap_{\mathfrak{p}\in\mathrm{Spec}(D)} \intR(E,D)_{\mathfrak{P}_{\mathfrak{p},a}}.$
\item $\intR(E,D)=\cap_{\mathfrak{m}\in\mathrm{Max}(D)} \intR(E,D_\mathfrak{m})=\cap_{a\in E}\cap_{\mathfrak{m}\in\mathrm{Max}(D)} \intR(E,D)_{\mathfrak{M}_{\mathfrak{m},a}}.$ 
\item If $(D,\mathfrak{m})$ is a local domain, then $\intR(E,D)=\cap_{a\in E} \intR(E,D)_{\mathfrak{M}_{\mathfrak{m},a}}.$
\end{enumerate}
\end{corollary}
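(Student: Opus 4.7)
The plan is to obtain all three items as direct specializations of the preceding proposition, which asserts that whenever $D=\bigcap_{\mathfrak{p}\in \mathcal{P}}D_{\mathfrak{p}}$ for some $\mathcal{P}\subseteq \mathrm{Spec}(D)$, the double-intersection formula for $\intR(E,D)$ holds. So the work reduces to checking, in each case, that $D$ equals the intersection of the localizations $D_{\mathfrak{p}}$ over the chosen family $\mathcal{P}$.

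For part (1), I would take $\mathcal{P}=\mathrm{Spec}(D)$. The equality $D=\bigcap_{\mathfrak{p}\in\mathrm{Spec}(D)}D_{\mathfrak{p}}$ is a standard fact for any integral domain (it follows from the stronger $D=\bigcap_{\mathfrak{m}\in\mathrm{Max}(D)}D_{\mathfrak{m}}$, since every maximal ideal is prime, so the intersection over $\mathrm{Spec}(D)$ is squeezed between $D$ and $\bigcap_{\mathfrak{m}\in\mathrm{Max}(D)}D_{\mathfrak{m}}=D$). Plugging $\mathcal{P}=\mathrm{Spec}(D)$ into the previous proposition yields (1) verbatim. For part (2), I would instead take $\mathcal{P}=\mathrm{Max}(D)$ and invoke the classical equality $D=\bigcap_{\mathfrak{m}\in\mathrm{Max}(D)}D_{\mathfrak{m}}$; the previous proposition then gives (2) directly.

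For part (3), when $(D,\mathfrak{m})$ is local we have $\mathrm{Max}(D)=\{\mathfrak{m}\}$ and $D_{\mathfrak{m}}=D$, so the equality $\intR(E,D)=\intR(E,D_\mathfrak{m})$ is trivial and the nontrivial content is the second equality
\[
\intR(E,D)=\bigcap_{a\in E} \intR(E,D)_{\mathfrak{M}_{\mathfrak{m},a}},
\]
which is exactly what part (2) specializes to when the outer index set reduces to a single maximal ideal.

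I do not expect any real obstacle: the previous proposition does the heavy lifting, and all that remains is picking the right $\mathcal{P}$ and citing the standard representation $D=\bigcap_{\mathfrak{m}\in\mathrm{Max}(D)} D_{\mathfrak{m}}$. The only minor care needed is to note that $\mathrm{Max}(D)\subseteq\mathrm{Spec}(D)$ are both nonempty (which is why the hypothesis of the previous proposition is met) and that in the local case the outer intersection collapses to a single factor.
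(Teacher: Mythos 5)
Your proposal is correct and follows exactly the route the paper intends: the corollary is a direct specialization of the preceding proposition with $\mathcal{P}=\mathrm{Spec}(D)$, $\mathcal{P}=\mathrm{Max}(D)$, and, in the local case, $\mathcal{P}=\{\mathfrak{m}\}$, combined with the standard representation $D=\bigcap_{\mathfrak{m}\in\mathrm{Max}(D)}D_{\mathfrak{m}}$. Nothing is missing.
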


\begin{theorem}\label{Loca}
If $D=\cap_{i}S_{i}^{-1}D,$ where $\{S_{i}\}_{i}$ is a family of multiplicative subsets of $D,$ then $ \intR(D)=\cap_{i}S_{i}^{-1}\intR(D).$
\end{theorem}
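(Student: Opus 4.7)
The plan is to establish the two inclusions separately. The inclusion $\intR(D) \subseteq \bigcap_i S_i^{-1}\intR(D)$ is immediate: every element $\varphi \in \intR(D)$ equals $\varphi/1$ in $S_i^{-1}\intR(D)$ for each $i$, using the canonical embedding of an integral domain into its localization.

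For the reverse inclusion, I would first observe that all the rings involved sit naturally inside $K(X)$. Indeed, from $D[X] \subseteq \intR(D) \subseteq K(X)$ and the fact that $K(X)$ is the quotient field of $D[X]$, we conclude that $K(X)$ is also the quotient field of $\intR(D)$. Hence each $S_i^{-1}\intR(D)$ is a subring of $K(X)$, and the intersection $\bigcap_i S_i^{-1}\intR(D)$ is taken in $K(X)$.

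Now I would take $\varphi \in \bigcap_i S_i^{-1}\intR(D)$ and, for each index $i$, write $\varphi = f_i/s_i$ with $f_i \in \intR(D)$ and $s_i \in S_i$. The small technical point to verify is that $\varphi$ is defined at every point of $D$: since $s_i$ is a nonzero constant of $D$ and $f_i \in \intR(D)$ is defined on all of $D$, the representation $\varphi = f_i/s_i$ shows that $\varphi$ is defined on $D$ and satisfies $\varphi(d) = f_i(d)/s_i \in S_i^{-1} D$ for every $d \in D$. Since this holds for each $i$, the hypothesis $D = \bigcap_i S_i^{-1} D$ yields $\varphi(d) \in D$ for every $d \in D$, i.e., $\varphi \in \intR(D)$.

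I do not expect a serious obstacle: the argument is essentially a definition chase combined with the hypothesis on $D$. The only point deserving any care is ensuring that the pointwise evaluation of $\varphi$ on $D$ is legitimate, and this is guaranteed by the representation $\varphi = f_i/s_i$ with $s_i$ a nonzero constant in $D$ and $f_i \in \intR(D)$.
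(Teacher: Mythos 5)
Your proof is correct and follows essentially the same route as the paper: your verification that $\varphi=f_i/s_i$ gives $\varphi(d)\in S_i^{-1}D$ for all $d\in D$ is exactly the paper's Lemma \ref{ll}, and combining this over all $i$ with $D=\cap_i S_i^{-1}D$ is the paper's Lemma \ref{ccv}, which you have simply inlined. The handling of the evaluation point (denominator a nonzero constant of $D$) is also sound.
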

To prove this result we need the following two lemmas.

\begin{lemma}\label{ccv}
 Let $\{D_{i}\}_{i}$ be a family of integral domains with the same quotient field $K$ and $E$ be a subset of $K.$ Then, we have $\cap_{i}\intR(E,D_{i})\subseteq \intR\left(E,\cap_{i}D_{i}\right).$
\end{lemma}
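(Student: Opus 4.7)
The statement is essentially a containment of intersection inside intersection, and the proof plan is short and direct. First I would fix an arbitrary element $\varphi$ in $\cap_{i}\intR(E,D_{i})$ and verify that it sits in the correct ambient object: since all the $D_{i}$ share the quotient field $K$, each $\intR(E,D_{i})$ is a subring of $K(X)$, so in particular $\varphi \in K(X)$. The intersection $\cap_{i}D_{i}$ is a subring of every $D_{i}$, so its quotient field embeds in $K$, and thus $K(X)$ is a legitimate ambient ring in which to interpret $\intR(E,\cap_{i}D_{i})$ as well.

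Next I would use the pointwise membership condition. By the choice of $\varphi$, for every index $i$ and every $e\in E$ we have $\varphi(e)\in D_{i}$. Taking the intersection over $i$ for a fixed $e$ yields $\varphi(e)\in \cap_{i}D_{i}$, and letting $e$ range over $E$ gives $\varphi(E)\subseteq \cap_{i}D_{i}$. This is exactly the defining condition for $\varphi$ to belong to $\intR\bigl(E,\cap_{i}D_{i}\bigr)$, so the inclusion follows.

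I do not expect any real obstacle. The only subtle point worth flagging in the write-up is the quotient-field bookkeeping at the very start: one should note once and for all that the hypothesis of a common quotient field $K$ is what makes it legitimate to compare the left- and right-hand sides as subsets of the same $K(X)$, since without it the notation $\intR(E,\cap_{i}D_{i})$ would be set inside a potentially smaller field of rational functions. Apart from that, the argument reduces to the tautology $\bigl(\forall i,\ \varphi(e)\in D_{i}\bigr)\Longleftrightarrow \varphi(e)\in \cap_{i}D_{i}$.
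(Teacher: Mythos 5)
Your proof is correct and follows essentially the same route as the paper's: take $\varphi$ in the intersection, note $\varphi\in K(X)$, and observe that $\varphi(E)\subseteq D_{i}$ for all $i$ forces $\varphi(E)\subseteq\cap_{i}D_{i}$, hence $\varphi\in\intR\left(E,\cap_{i}D_{i}\right)$. Your side remark on the quotient field of $\cap_{i}D_{i}$ is a reasonable point of bookkeeping, and the paper handles it the same way you do, implicitly interpreting $\intR\left(E,\cap_{i}D_{i}\right)$ inside $K(X)$ (in the paper's applications the intersection is $D$ itself, so the issue does not arise).
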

\begin{proof}
Let $\varphi \in \cap_{i}\intR(E,D_{i})$. For each $i$, it follows that $\varphi(E)\subseteq D_{i}$ and $\varphi\in K(X)$. Then, $\varphi(E)\subseteq \cap_{i}D_{i}$, and thus, $\varphi \in \intR(E,\cap_{i}D_{i})$.
\end{proof}

\begin{lemma}\label{ll}
For any multiplicative subset $S$ of $D$, we have
    $S^{-1}\intR(D)\subseteq \intR(D,S^{-1}D).$
\end{lemma}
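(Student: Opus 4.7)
The plan is to unpack both sides directly and then carry out the evaluation. An element of $S^{-1}\intR(D)$ has the form $\varphi/s$ with $\varphi \in \intR(D)$ and $s \in S$, where we regard $S \subseteq D \subseteq \intR(D)$ via constants. Both rings sit naturally inside $K(X)$: on the left, $S^{-1}\intR(D)$ is an overring of $\intR(D) \subseteq K(X)$ since every nonzero element of $S$ is already a unit in $K(X)$; on the right, $\intR(D, S^{-1}D) = \{f \in K(X) : f(D) \subseteq S^{-1}D\}$, which is well-defined because $S^{-1}D$ has quotient field $K$. So the inclusion makes sense inside a common ambient ring.

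The main step is the evaluation. First I would take an arbitrary $\psi = \varphi/s \in S^{-1}\intR(D)$. For each $x \in D$, the value $\varphi(x)$ is defined and lies in $D$ by the hypothesis $\varphi \in \intR(D)$. Then $\psi(x) = \varphi(x)/s \in S^{-1}D$, so $\psi(D) \subseteq S^{-1}D$, which gives $\psi \in \intR(D, S^{-1}D)$.

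There is essentially no obstacle here; the only point worth a brief mention is that $\varphi(x)$ is genuinely defined for every $x \in D$ (which is part of the meaning of $\varphi \in \intR(D)$), so the pointwise computation is legitimate. The lemma is presumably stated as a stepping stone to Theorem \ref{Loca}, and the reason one cannot expect equality in general is flagged by the remark after Proposition \ref{Notation}: $S^{-1}\intR(D) \subseteq \intR(S^{-1}D)$ can fail, and $\intR(D, S^{-1}D)$ is the natural intermediate object that does catch $S^{-1}\intR(D)$.
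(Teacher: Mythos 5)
Your proposal is correct and follows essentially the same route as the paper: the paper takes $\varphi\in S^{-1}\mathrm{Int^R}(D)$, picks $s\in S$ with $s\varphi\in \mathrm{Int^R}(D)$, and divides the pointwise values $s\varphi(d)\in D$ by $s$ to get $\varphi(d)\in S^{-1}D$, which is exactly your computation with $\psi=\varphi/s$. No gaps; the remark that evaluation at each $d\in D$ is legitimate is the same implicit point the paper relies on.
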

\begin{proof}
    Let $\varphi \in S^{-1}\intR(D)$. Then there exists an element $s$ of $S$ such that $s\varphi \in \intR(D)$, and hence $s\varphi\in K(X)$ and $s\varphi(d)\in D$, for all $d\in D$. Consequently, $\varphi\in K(X)$ and $\varphi(d)\in S^{-1}D$ for all $d\in D$, that is, $\varphi \in \intR(D,S^{-1}D)$. 
\end{proof}

\begin{proof}[Proof of Theorem \ref{Loca}]
 The direct inclusion is trivial. For the reverse inclusion, let $\varphi \in \cap_{i}S_{i}^{-1}\mathrm{Int}^{\mathrm{R}}(D)$. For each $i$, we have $\varphi \in S_{i}^{-1}\mathrm{Int}^{\mathrm{R}}(D)$. Then, it follows from Lemma \ref{ll} that  $\varphi \in \mathrm{Int}^{\mathrm{R}}(D,S_{i}^{-1}D)$ for all $i$. Hence, by Lemma \ref{ccv}, $\varphi \in \mathrm{Int}^{\mathrm{R}}(D,\cap_{i}S_{i}^{-1}D)=\mathrm{Int}^{\mathrm{R}}(D,D)=\mathrm{Int}^{\mathrm{R}}(D)$, and this completes the proof.
\end{proof}

\begin{corollary}
If $D=\cap_{\mathfrak{p}\in \mathcal{P}}D_{\mathfrak{p}},$ where $\mathcal{P}$ is a nonempty subset  of $\mathrm{Spec}(D),$ then $ \intR(D)=\cap_{\mathfrak{p}\in \mathcal{P}}\intR(D)_\mathfrak{p}$. In particular,  $\intR(D)=\cap_{\mathfrak{p}\in \mathrm{Spec}(D)}\intR(D)_\mathfrak{p}=\cap_{\mathfrak{m}\in \mathrm{Max}(D)}\intR(D)_\mathfrak{m}.$
\end{corollary}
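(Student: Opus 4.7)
The strategy is essentially a direct specialization of Theorem~\ref{Loca}. Given the family $\mathcal{P}\subseteq\mathrm{Spec}(D)$ with $D=\bigcap_{\mathfrak{p}\in\mathcal{P}}D_{\mathfrak{p}}$, I would set $S_{\mathfrak{p}}:=D\setminus\mathfrak{p}$ for each $\mathfrak{p}\in\mathcal{P}$. Each $S_{\mathfrak{p}}$ is a multiplicative subset of $D$, and $S_{\mathfrak{p}}^{-1}D=D_{\mathfrak{p}}$, so the hypothesis exactly matches $D=\bigcap_{\mathfrak{p}\in\mathcal{P}}S_{\mathfrak{p}}^{-1}D$ required by Theorem~\ref{Loca}. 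Applying that theorem yields
\[
\intR(D)=\bigcap_{\mathfrak{p}\in\mathcal{P}}S_{\mathfrak{p}}^{-1}\intR(D)=\bigcap_{\mathfrak{p}\in\mathcal{P}}\intR(D)_{\mathfrak{p}},
\]
which is the first (general) assertion.

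For the two special cases, I would invoke the standard facts from commutative algebra that every integral domain $D$ satisfies $D=\bigcap_{\mathfrak{p}\in\mathrm{Spec}(D)}D_{\mathfrak{p}}$ and $D=\bigcap_{\mathfrak{m}\in\mathrm{Max}(D)}D_{\mathfrak{m}}$ (since every nonunit lies in some maximal ideal). Taking $\mathcal{P}=\mathrm{Spec}(D)$ and then $\mathcal{P}=\mathrm{Max}(D)$ in the already-established equality immediately gives both representations, completing the proof.

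I do not expect any genuine obstacle here: the corollary is a clean specialization, and Theorem~\ref{Loca} (proved just above using Lemmas~\ref{ccv} and~\ref{ll}) does all the work. The only tiny care point is to observe that $\mathcal{P}$ being nonempty guarantees the intersection is taken over a nonempty family, which is needed to make $D=\bigcap_{\mathfrak{p}\in\mathcal{P}}D_{\mathfrak{p}}$ meaningful before applying the theorem.
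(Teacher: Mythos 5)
Your proof is correct and matches the paper's intent: the corollary is stated as an immediate specialization of Theorem~\ref{Loca} with $S_{\mathfrak{p}}=D\setminus\mathfrak{p}$, plus the standard representations $D=\bigcap_{\mathfrak{p}\in\mathrm{Spec}(D)}D_{\mathfrak{p}}=\bigcap_{\mathfrak{m}\in\mathrm{Max}(D)}D_{\mathfrak{m}}$ for the particular cases. Nothing further is needed.
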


{
\begin{remark}
It is known that the equality $\mathrm{Int}(D,S^{-1}D)=\mathrm{Int}(S^{-1}D)$ 
holds for any multiplicative subset $S$ of $D$ \cite[Corollary I.2.6]{C97}. Unfortunately, this is not the case for rings of integer-valued rational functions. In fact, if the equality $\mathrm{Int}^\mathrm{R}(D,S^{-1}D)=\mathrm{Int}^\mathrm{R}(S^{-1}D)$ holds, we deduce from Lemma \ref{ll} that  $S^{-1}\mathrm{Int}^\mathrm{R}(D)\subseteq\mathrm{Int}^\mathrm{R}(S^{-1}D)$, which is not always true as shown in \cite[Example 1.22]{BL22}.
\end{remark}}

\begin{proposition}
Let $\{S_{i}\}_{i}$ be a family of multiplicative subsets of $D$ such that $D=\cap_{i}S_{i}^{-1}D$. Then $\intR(E,D)=\cap_{i}\intR(E,S_{i}^{-1}D)$.
\end{proposition}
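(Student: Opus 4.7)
The plan is to prove the two inclusions separately, both of which follow almost immediately from results already established in the excerpt, so the main task is really to identify which prior results to invoke rather than to do original work.

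For the inclusion $\intR(E,D)\subseteq\cap_i\intR(E,S_i^{-1}D)$, I would apply Proposition \ref{Comp} with $D_1=D$, $D_2=S_i^{-1}D$, $F=E$, and $B_1=B_2=K$ (the common quotient field), noting that $D\subseteq S_i^{-1}D$ is an extension of domains with the same quotient field $K$. This yields $\intR(E,D)\subseteq\intR(E,S_i^{-1}D)$ for every index $i$, whence the intersection contains $\intR(E,D)$.

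For the reverse inclusion $\cap_i\intR(E,S_i^{-1}D)\subseteq\intR(E,D)$, I would invoke Lemma \ref{ccv} applied to the family $\{S_i^{-1}D\}_i$ of overrings of $D$, which all share the quotient field $K$. This gives
\[
\bigcap_i\intR(E,S_i^{-1}D)\;\subseteq\;\intR\!\left(E,\bigcap_i S_i^{-1}D\right).
\]
Now the hypothesis $D=\cap_i S_i^{-1}D$ lets me identify the right-hand side as $\intR(E,D)$, finishing the proof.

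There is essentially no obstacle here: the argument is a direct bookkeeping of Proposition \ref{Comp} in one direction and Lemma \ref{ccv} in the other. The only subtlety worth flagging is that one must verify that the $S_i^{-1}D$ all have the same quotient field $K$ as $D$ (so that Lemma \ref{ccv} applies and so that we work inside a single $K(X)$), which is immediate since each $S_i^{-1}D$ is an overring of $D$.
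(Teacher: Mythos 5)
Your proof is correct and follows essentially the same route as the paper: the forward inclusion from the containment $D\subseteq S_i^{-1}D$ (via Proposition \ref{Comp}), and the reverse inclusion from Lemma \ref{ccv} together with the hypothesis $D=\cap_i S_i^{-1}D$. No gaps to report.
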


\begin{proof}
Since $D$ is contained in $S_{i}^{-1}D$ for each $i$, it follows that $\intR(E,D)\subseteq \intR(E,S_{i}^{-1}D)$ for each $i$, and hence $\intR(E,D)\subseteq \cap_{i}\intR(E,S_{i}^{-1}D)$. On the other side, from Lemma \ref{ccv}, we infer that $\cap_{i}\intR(E,S_{i}^{-1}D)\subseteq\intR(E,\cap_{i}S_{i}^{-1}D)=\intR(E,D)$. Thus, $\intR(E,D)=\cap_{i}\intR(E,S_{i}^{-1}D)$.
\end{proof}

In the remainder of this section, we will be interested in $v$- and $t$-ideals in the context of  $\intR(E,D)$. \\

The following result represents a special case of \cite[Lemma 1.9]{CLT00}.
\begin{proposition}\label{tstar}
Each $t$-ideal of $D$ is contained in a $t$-ideal of $\intR(E,D)$.
\end{proposition}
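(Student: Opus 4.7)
The natural candidate is $\mathfrak{I}:=(IR)_t$, where $R:=\intR(E,D)$ and the $t$-operation is computed in $R$. By construction $\mathfrak{I}$ is a $t$-ideal of $R$ containing $I$, so the only real content of the statement is that $\mathfrak{I}$ is a \emph{proper} ideal of $R$.

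My plan rests on the structural identity $R\cap K=D$: viewed as a constant rational function, an $x\in K$ belongs to $\intR(E,D)$ iff its value at any point of $E$ (which is $x$ itself) lies in $D$. First I would perform the following reduction: any finitely generated $J'\subseteq IR$ lies inside $JR$ for some finitely generated $J\subseteq I$ of $D$ (take $J$ to be the ideal generated by the $I$-coefficients appearing when one expresses each generator of $J'$ as a finite $R$-linear combination of elements of $I$). Consequently $(IR)_t=\bigcup_{J}(JR)_v$ as $J$ ranges over the finitely generated sub-ideals of $I$ in $D$, and it suffices to show $(JR)_v\ne R$ for each such $J$.

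Fix such a $J$. Since for an integral ideal $\mathfrak a$ of $R$ one has $\mathfrak a_v=R$ iff $\mathfrak a^{-1}=R$, this is equivalent to $(JR)^{-1}_R\ne R$. A one-line computation gives $(JR)^{-1}_R=\{\varphi\in K(X):\varphi J\subseteq R\}$, which clearly contains $J^{-1}_D=\{x\in K:xJ\subseteq D\}$. Because $I$ is a proper $t$-ideal and $J\subseteq I$, I have $J_v\subseteq I_t=I\subsetneq D$, hence $J^{-1}_D\supsetneq D$. Any $x\in J^{-1}_D\setminus D$ then lies in $K\setminus D$, and the identity $R\cap K=D$ forces $x\notin R$. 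This $x$ witnesses $(JR)^{-1}_R\not\subseteq R$, so $(JR)_v\ne R$ as required, which finishes the proof.

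The only place I expect possible friction is the bookkeeping between the $v$- and $t$-operations in the two different ambient rings $D$ and $R$, together with the reduction of finitely generated $R$-subideals of $IR$ to extensions of finitely generated $D$-subideals of $I$. Once that reduction is in place, the identity $R\cap K=D$, which is immediate from the fact that constants evaluate to themselves, does all the real work.
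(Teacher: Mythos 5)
Your proof is correct, and it takes a genuinely different route from the paper: the paper offers no argument of its own, simply observing that the statement is a special case of \cite[Lemma 1.9]{CLT00}, a general result on ring extensions, whereas you give a direct, self-contained verification. Concretely, you establish properness of the explicit $t$-ideal $\bigl(I\,\intR(E,D)\bigr)_t$ using exactly the two ingredients the paper only develops afterwards: the reduction of (nonzero) finitely generated subideals of $I\,\intR(E,D)$ to extended ideals $J\,\intR(E,D)$ with $J$ finitely generated in $D$ (the same device as in the proof of Proposition \ref{tcont}), and the identity $\intR(E,D)\cap K=D$, which produces an element of $J^{-1}\setminus D\subseteq \bigl(J\,\intR(E,D)\bigr)^{-1}\setminus \intR(E,D)$ and hence gives $\bigl(J\,\intR(E,D)\bigr)_v\neq \intR(E,D)$. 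In fact your properness claim also drops out of the paper's later Proposition \ref{tcont} (since $\bigl(I\,\intR(E,D)\bigr)_t\cap D=I_t=I\neq D$), which is precisely how the paper argues the analogous point at the start of the proof of Corollary \ref{Str}. What the citation buys is brevity and a more general framework; what your argument buys is an elementary proof that moreover exhibits a concrete $t$-ideal of $\intR(E,D)$ containing $I$. Only cosmetic caveats: take the union over \emph{nonzero} finitely generated subideals (the $v$-operation is defined only for those), and note that the statement is understood for proper $t$-ideals $I$, as you implicitly assume.
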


In what follows, we will show that the contraction of any $t$-ideal of $\intR(E,D)$ is a $t$-ideal of $D$. To achieve this, we first require the following useful lemma.
\begin{lemma}\label{lstar}
For any nonzero finitely generated ideal $J$ of $D,$ we have  $$(J\intR(E,D))_{v}\cap D=J_{v}.$$
\end{lemma}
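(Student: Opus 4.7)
The plan is to prove the two inclusions $J_v\subseteq (J\intR(E,D))_v\cap D$ and $(J\intR(E,D))_v\cap D\subseteq J_v$ separately, by working directly from the definitions of $I^{-1}$ and $I_v$. The pivotal auxiliary fact I will use throughout is that $\intR(E,D)\cap K=D$: any element of $K$ lying in $\intR(E,D)$ is a constant rational function whose value at any $e\in E$ must lie in $D$ (this uses that $E$ is nonempty). Write $J=(a_{1},\dots,a_{n})$.

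For the inclusion $J_v\subseteq (J\intR(E,D))_v\cap D$, let $y\in J_v$, so by definition $yJ^{-1}\subseteq D$. To show $y\in (J\intR(E,D))_v$, I will pick an arbitrary $\varphi\in (J\intR(E,D))^{-1}$ and verify that $y\varphi\in\intR(E,D)$. The condition $\varphi\in (J\intR(E,D))^{-1}$ means that each $\varphi a_i$ lies in $\intR(E,D)$; since the $a_i$ are nonzero constants in $K$, this forces $\varphi$ to have no poles on $E$, and for each $e\in E$ the value $\varphi(e)\in K$ must satisfy $a_i\varphi(e)\in D$ for all $i$, i.e.\ $\varphi(e)\in J^{-1}$. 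Combined with $y\in J_v$, this yields $y\varphi(e)\in yJ^{-1}\subseteq D$ for every $e\in E$, hence $y\varphi\in\intR(E,D)$. Since $\varphi$ was arbitrary, $y\in(J\intR(E,D))_v$; and since $y\in J_v\subseteq K$ and clearly $J_v\subseteq D$ for any finitely generated (hence fractional) ideal $J\subseteq D$, the inclusion follows.

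For the reverse inclusion, let $y\in (J\intR(E,D))_v\cap D$. Then $y\cdot (J\intR(E,D))^{-1}\subseteq\intR(E,D)$. Pick any $x\in J^{-1}$; then $xJ\subseteq D\subseteq\intR(E,D)$, so the constant rational function $x$ lies in $(J\intR(E,D))^{-1}$. Therefore $yx\in\intR(E,D)$, and since $y,x\in K$ the product $yx$ is a constant, so $yx\in\intR(E,D)\cap K=D$. Hence $yJ^{-1}\subseteq D$, which is exactly the statement $y\in J_v$.

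The step I expect to require the most care is the reduction of $\varphi\in (J\intR(E,D))^{-1}$ to the pointwise assertion $\varphi(e)\in J^{-1}$ for every $e\in E$: one must be sure that $\varphi$ is genuinely defined on $E$ as a rational function, which is where it is essential that the generators $a_{i}\in K^{\times}$ are nonzero \emph{constants} and so cannot cancel any pole of $\varphi$ along $E$. Everything else amounts to unpacking the definitions and invoking $\intR(E,D)\cap K=D$.
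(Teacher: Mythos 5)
Your proof is correct and follows essentially the same route as the paper: both directions are handled by reducing $(J\mathrm{Int^R_B}(E,D))^{-1}=\bigcap_i \frac{1}{a_i}\mathrm{Int}^\mathrm{R}(E,D)$ to the pointwise condition $\varphi(e)\in J^{-1}$ and then invoking $\mathrm{Int^R}(E,D)\cap K=D$, exactly as in the paper's argument (including the observation $J^{-1}\subseteq (J\mathrm{Int^R}(E,D))^{-1}$ for the reverse inclusion).
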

\begin{proof}
Let $J=(a_1,...,a_n)$ be a nonzero finitely generated ideal of $D$. We first note that $J^{-1}=\cap_{1\leqslant i \leqslant n}\frac{1}{a_i}D$ and  $\left(J\intR(E,D)\right)^{-1}=\cap_{1\leqslant i \leqslant n}\frac{1}{a_i}\intR(E,D).$\\
If $x\in\left(J\intR(E,D)\right)_{v}\cap D$, then $x\left(J\intR(E,D)\right)^{-1}\subseteq \intR(E,D)$, and hence $xJ^{-1}\subseteq \intR(E,D)$ because $J^{-1}\subseteq \left(J\intR(E,D)\right)^{-1}$. Thus, $xJ^{-1}\subseteq \intR(E,D)\cap  K=D,$ that is, $x\in J_v$. For the other inclusion, if $x\in J_{v}$, then  $xJ^{-1}\subseteq D$. So, let $\varphi \in\left(J\intR(E,D)\right)^{-1}.$ Then, $\varphi\in\cap_{1\leqslant i \leqslant n}\frac{1}{a_i}\intR(E,D)$, and hence $\varphi(e)\in \cap_{1\leqslant i \leqslant n}\frac{1}{a_i}D=J^{-1}$, for all $e\in E$. Thus, $x\varphi(e) \in xJ^{-1} \subseteq D,$ for all $e\in E$, i.e., $x\varphi\in \intR(E,D)$. Therefore, $x\left(J\intR(E,D)\right)^{-1}\subseteq\intR(E,D)$  and so $x\in(J\intR(E,D))_{v}\cap D$. 
\end{proof}

\begin{proposition}\label{tcont}
For any nonzero ideal $I$ of $D$, we have $\left(I\intR(E,D)\right)_t\cap D=I_t.$ In particular, if $\mathfrak{A}$ is a $t$-ideal (resp., $t$-prime ideal) of $\intR(E,D)$ such that $\mathfrak{A}\cap D\neq(0)$, then $\mathfrak{A}\cap D$ is a $t$-ideal (resp., $t$-prime ideal) of $D$.
\end{proposition}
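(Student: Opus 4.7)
The plan is to prove the equality $(I\intR(E,D))_t \cap D = I_t$ directly from the definition of the $t$-operation (as the union of $v$-closures of nonzero finitely generated subideals), using Lemma \ref{lstar} as the main tool, and then to derive the second assertion as a formal consequence.

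For the inclusion $I_t \subseteq (I\intR(E,D))_t \cap D$, I would take $x \in I_t$, so that $x \in J_v$ for some nonzero finitely generated ideal $J \subseteq I$. Lemma \ref{lstar} gives $J_v = (J\intR(E,D))_v \cap D$, hence $x \in (J\intR(E,D))_v$. Since $J\intR(E,D)$ is a finitely generated subideal of $I\intR(E,D)$, its $v$-closure is contained in $(I\intR(E,D))_t$, and because $x \in D$ we land in the intersection. The reverse inclusion is the more delicate step. Given $x \in (I\intR(E,D))_t \cap D$, there is a nonzero finitely generated ideal $\mathfrak{J} = (\varphi_1, \dots, \varphi_n)$ of $\intR(E,D)$ with $\mathfrak{J} \subseteq I\intR(E,D)$ and $x \in \mathfrak{J}_v$. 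I would write each $\varphi_i = \sum_{j} a_{ij}\psi_{ij}$ with $a_{ij} \in I$ and $\psi_{ij} \in \intR(E,D)$, and set $J$ to be the finitely generated ideal of $D$ generated by all the $a_{ij}$'s. Then $J \subseteq I$ is finitely generated and $\mathfrak{J} \subseteq J\intR(E,D)$, so $\mathfrak{J}_v \subseteq (J\intR(E,D))_v$. Applying Lemma \ref{lstar} again, $x \in (J\intR(E,D))_v \cap D = J_v \subseteq I_t$, as desired.

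For the ``in particular'' part, suppose $\mathfrak{A}$ is a $t$-ideal of $\intR(E,D)$ with $I := \mathfrak{A}\cap D \neq (0)$. Since $I \subseteq \mathfrak{A}$, we have $I\intR(E,D) \subseteq \mathfrak{A}$, hence $(I\intR(E,D))_t \subseteq \mathfrak{A}_t = \mathfrak{A}$. Intersecting with $D$ and applying the already established equality yields $I_t = (I\intR(E,D))_t \cap D \subseteq \mathfrak{A} \cap D = I$. Combined with the automatic inclusion $I \subseteq I_t$, this gives $I = I_t$, so $I$ is a $t$-ideal of $D$. The prime case is immediate since contractions of prime ideals along ring extensions are prime.

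The main obstacle is the ``descent'' step in the reverse inclusion: controlling a finitely generated subideal of $I\intR(E,D)$ by a finitely generated subideal of $I$. The crucial observation is that although $I$ itself need not be finitely generated, each fixed generator $\varphi_i$ of $\mathfrak{J}$ is a \emph{finite} $\intR(E,D)$-combination of elements of $I$, so only finitely many elements of $I$ are needed to dominate all of $\mathfrak{J}$; this lets us invoke Lemma \ref{lstar}, which is stated only for finitely generated ideals of $D$.
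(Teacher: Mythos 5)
Your proposal is correct and follows essentially the same route as the paper: both proofs reduce to Lemma \ref{lstar} via the key observation that any finitely generated ideal of $\intR(E,D)$ contained in $I\intR(E,D)$ sits inside $J\intR(E,D)$ for some finitely generated $J\subseteq I$ (you make this descent step explicit by expanding the generators, which the paper only states), and the ``in particular'' part is handled identically. No gaps.
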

\begin{proof}
Let $I$ be a nonzero ideal of $D$. We have $\left(I\intR(E,D)\right)_t=\cup \mathfrak{J}_v,$ where $\mathfrak{I}$ ranges over the finitely generated ideals of $\intR(E,D)$ contained in $I\intR(E,D)$. As any of these $\mathfrak{J}$ is contained in a finitely generated ideal $J\intR(E,D)$ with $J$ is a finitely generated ideal of $D$ (contained in $I$), so we can write $\left(I\intR(E,D)\right)_t=\cup \left(J\intR(E,D)\right)_v,$ and then $\left(I\intR(E,D)\right)_t\cap D=\cup ((J\intR(E,D))_v \cap D)=\cup J_v$ (this last equality follows from Lemma \ref{lstar}). Therefore, $\left(I\intR(E,D)\right)_t\cap D=I_t.$ For the particular case, if $\mathfrak{A}$ is a $t$-ideal of $\intR(E,D)$ such that $\mathfrak{A}\cap D\neq(0)$, then the previous equality implies that $((\mathfrak{A}\cap D)\intR(E,D))_t\cap D=(\mathfrak{A}\cap D)_t$. Since $(\mathfrak{A}\cap D)\intR(E,D)\subseteq \mathfrak{A}$ and $\mathfrak{A}$ is a $t$-ideal, $((\mathfrak{A}\cap D)\intR(E,D))_t \subseteq \mathfrak{A}_t=\mathfrak{A}$, and hence $((\mathfrak{A}\cap D)\intR(E,D))_t\cap D\subseteq \mathfrak{A} \cap D$. Since $\mathfrak{A}\cap D$ is always contained in $(\mathfrak{A}\cap D)_t,$ we deduce the equality $(\mathfrak{A}\cap D)_t=\mathfrak{A}\cap D,$ and this means that $\mathfrak{A} \cap D$ is a $t$-ideal of $D$. The $t$-primeness follows from the fact that any contraction of a prime ideal of $\intR(E,D)$ is a prime ideal of $D$.
\end{proof}
As a corollary of this last result, we establish an analogue of Proposition \ref{tstar} for $t$-maximal ideals.
\begin{corollary}\label{Str}
Each $t$-maximal ideal of $D$ is the contraction of a $t$-maximal ideal of $\intR(E,D)$.
\end{corollary}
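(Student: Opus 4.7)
The plan is to produce, for each $t$-maximal ideal $\mathfrak{m}$ of $D$, a $t$-maximal ideal $\mathfrak{M}$ of $\intR(E,D)$ lying over it. I will do this by starting with a proper $t$-ideal of $\intR(E,D)$ containing $\mathfrak{m}$, enlarging it to a $t$-maximal ideal, and then showing that its contraction must equal $\mathfrak{m}$.

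More precisely, let $\mathfrak{m}$ be a $t$-maximal ideal of $D$. By Proposition \ref{tstar}, there exists a (proper) $t$-ideal $\mathfrak{A}$ of $\intR(E,D)$ with $\mathfrak{m}\subseteq \mathfrak{A}$. Since the $t$-operation is of finite character, a standard Zorn's-lemma argument shows that $\mathfrak{A}$ is contained in some $t$-maximal ideal $\mathfrak{M}$ of $\intR(E,D)$, and such a $\mathfrak{M}$ is automatically prime.

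Now consider the contraction $\mathfrak{M}\cap D$. It contains $\mathfrak{m}$, so it is nonzero and different from $D$ (since $\mathfrak{M}$ is proper). By Proposition \ref{tcont} applied to the $t$-prime ideal $\mathfrak{M}$, the ideal $\mathfrak{M}\cap D$ is a $t$-(prime) ideal of $D$ containing $\mathfrak{m}$. The $t$-maximality of $\mathfrak{m}$ then forces $\mathfrak{M}\cap D=\mathfrak{m}$, as required.

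I do not anticipate a real obstacle: both the existence of a $t$-maximal ideal above a proper $t$-ideal (using that $t$ is of finite type) and the primality of $t$-maximal ideals are classical facts about star operations, and the key $t$-contraction identity is already supplied by Proposition \ref{tcont}. The only point worth stating carefully is that $\mathfrak{A}$ in Proposition \ref{tstar} is proper, so that $\mathfrak{M}\cap D\subsetneq D$; this is built into the meaning of ``$t$-ideal'' adopted in the paper.
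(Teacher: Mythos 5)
Your proof is correct and follows essentially the same route as the paper: extend $\mathfrak{m}$ to a $t$-maximal ideal $\mathfrak{M}$ of $\intR(E,D)$ (using that $t$ has finite character) and then apply Proposition \ref{tcont} together with the $t$-maximality of $\mathfrak{m}$ to conclude $\mathfrak{M}\cap D=\mathfrak{m}$. The only cosmetic difference is that the paper does not invoke Proposition \ref{tstar}: it verifies directly via Proposition \ref{tcont} that $(\mathfrak{m}\intR(E,D))_t\neq\intR(E,D)$ (otherwise $\mathfrak{m}=\mathfrak{m}_t=(\mathfrak{m}\intR(E,D))_t\cap D=D$, a contradiction), which supplies exactly the proper $t$-ideal you instead obtain from Proposition \ref{tstar}.
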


\begin{proof}
We first observe that if $\mathfrak{p}$ is a $t$-prime ideal of $D$, then $\mathfrak{p}\intR(E,D)$ is contained in a $t$-prime ideal of $\intR(E,D)$. Otherwise, we have $(\mathfrak{p}\intR(E,D))_t= \intR(E,D)$, and so it follows from Proposition \ref{tcont} that $\mathfrak{p}=\mathfrak{p}_t = (\mathfrak{p}\intR(E,D))_t \cap D = \intR(E,D)\cap D = D$, which is a contradiction. Thus, $\mathfrak{p}\intR(E,D)$ is contained in a $t$-maximal ideal $\mathfrak{M}$ of $\intR(E,D)$. Again, by Proposition \ref{tcont}, $\mathfrak{M}\cap D$ is a $t$-prime ideal of $D$. Consequently, if $\mathfrak{p}$ is assumed to be $t$-maximal, then $\mathfrak{M}\cap D = \mathfrak{p}$, and this completes the proof.   
\end{proof}

\begin{corollary}
The extension $D\subseteq\intR(E,D)$ is $t$-compatible, that is, $I_t\subseteq\left(I\intR(E,D)\right)_t,$ for any nonzero fractional ideal $I$ of $D$.
\end{corollary}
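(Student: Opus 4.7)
The plan is to derive this corollary directly from Proposition \ref{tcont}, handling the passage from integral to fractional ideals by clearing denominators. Since Proposition \ref{tcont} already gives the conclusion (and more) for nonzero integral ideals of $D$, the only work is a rescaling argument.

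First, I would observe that when $I$ is a nonzero (integral) ideal of $D$, Proposition \ref{tcont} asserts $(I\intR(E,D))_t \cap D = I_t$, and since the left-hand side is trivially contained in $(I\intR(E,D))_t$, the inclusion $I_t \subseteq (I\intR(E,D))_t$ is immediate.

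For a general nonzero fractional ideal $I$ of $D$, I would choose $d \in D\setminus\{0\}$ with $J := dI \subseteq D$, making $J$ a nonzero integral ideal. The integral case just established gives
\[
J_t \;\subseteq\; (J\intR(E,D))_t.
\]
To descend this to $I$, I would invoke the standard compatibility of the $v$- and $t$-operations with multiplication by a nonzero element of the quotient field: for any nonzero $a$ in the quotient field of the base ring and any nonzero fractional ideal $M$, one has $(aM)^{-1}=a^{-1}M^{-1}$, hence $(aM)_v = a M_v$, and therefore $(aM)_t = a M_t$ (writing the $t$-closure as the directed union of the $v$-closures of its finitely generated submodules). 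Applying this with $a=d$, first with $M = I$ in the ambient ring $D$ and then with $M = I\intR(E,D)$ in the ambient ring $\intR(E,D)$ (noting that $d \in K \subseteq K(X)$ lies in the common quotient field), yields $J_t = d\,I_t$ and $(J\intR(E,D))_t = d\,(I\intR(E,D))_t$. Substituting these identities into the displayed inclusion and multiplying by $d^{-1}$ gives $I_t \subseteq (I\intR(E,D))_t$, which is the required $t$-compatibility.

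The principal content is already carried by Proposition \ref{tcont}; the rescaling step is routine and presents no real obstacle.
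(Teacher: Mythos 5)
Your proposal is correct and follows the route the paper intends: the integral-ideal case is immediate from Proposition \ref{tcont}, and the passage to an arbitrary nonzero fractional ideal is the standard rescaling $J=dI$ together with the identities $(dM)_v=dM_v$ and $(dM)_t=dM_t$, applied in $D$ and in $\intR(E,D)$. Nothing further is needed.
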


\begin{proposition}\label{tvId}
For any nonzero fractional ideal $I$ of $D$, we have:
\begin{enumerate}[$(1)$]
\item  $\left(I \intR(E,D)\right)^{-1}= \intR(E,I^{-1})=\left( \intR(E,I)\right)^{-1},$
\item  $\left(I \intR(E,D)\right)_{v}=\left(I_{v} \intR(E,D)\right)_{v}=\left( \intR(E,I)\right)_{v}=\intR(E,I_{v}),$
\item $\left(I \intR(E,D)\right)_{t}=\left(I_{t} \intR(E,D)\right)_t$.
\end{enumerate}
\end{proposition}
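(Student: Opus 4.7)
The plan is to derive part (1) by a direct elementwise argument and then bootstrap parts (2) and (3) from it, the main obstacle being part (3) where finitely generated subideals of $\intR(E,D)$ are not manifestly controlled by those of $D$.

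\textbf{For part (1),} I would prove the two equalities by double inclusion. To get $(I\intR(E,D))^{-1} \subseteq \intR(E, I^{-1})$, take $\varphi$ in the left-hand side and use that for every constant $c \in I$ the function $c\varphi$ lies in $\intR(E,D)$, so evaluating at any $e \in E$ forces $\varphi(e)\,I \subseteq D$, i.e.\ $\varphi(e) \in I^{-1}$. Conversely, if $\varphi \in \intR(E,I^{-1})$, $\psi \in \intR(E,D)$ and $c \in I$, then $(\varphi\psi c)(e)=(\varphi(e)c)\,\psi(e)$ is a product of two elements of $D$, so $\varphi c\psi \in \intR(E,D)$. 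The second equality $(\intR(E,I))^{-1} = \intR(E,I^{-1})$ is essentially the same computation; the only new input is that the constants $c \in I$ lie in $\intR(E,I)$, providing enough test elements to pass between pointwise containment in $I^{-1}$ and multiplicative containment in $\intR(E,D)$.

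\textbf{For part (2),} everything follows from (1) applied successively to $I$, $I^{-1}$ and $I_v$. Using $(I^{-1})^{-1}=I_v$ and $(I_v)^{-1}=I^{-1}$, one computes
\[
(I\intR(E,D))_v = ((I\intR(E,D))^{-1})^{-1} = (\intR(E,I^{-1}))^{-1} = \intR(E,I_v),
\]
where the last equality is (1) with $I$ replaced by $I^{-1}$. The same one-line computation with $I$ replaced by $I_v$ gives $(I_v\intR(E,D))_v = \intR(E,I_v)$, and $(\intR(E,I))_v = ((\intR(E,I))^{-1})^{-1} = (\intR(E,I^{-1}))^{-1} = \intR(E,I_v)$, closing the chain of four equalities.

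\textbf{For part (3),} the obstacle is that a finitely generated fractional ideal of $\intR(E,D)$ need not be extended from a finitely generated ideal of $D$, so one cannot reduce (3) directly to (2) by taking $t$-generating families. The inclusion $(I\intR(E,D))_t \subseteq (I_t \intR(E,D))_t$ is immediate from $I \subseteq I_t$ and monotonicity of the $t$-operation. For the reverse, it suffices to show $I_t \subseteq (I\intR(E,D))_t$ and then apply $(\cdot)_t$ to both sides together with idempotence of $t$. Given $x \in I_t$, choose a finitely generated ideal $J \subseteq I$ with $x \in J_v$; by part (2) applied to $J$ we have $(J\intR(E,D))_v = \intR(E,J_v)$, and since the constant function $x$ clearly lies in $\intR(E,J_v)$, we obtain $x \in (J\intR(E,D))_v$. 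But $J\intR(E,D)$ is a finitely generated fractional ideal of $\intR(E,D)$ contained in $I\intR(E,D)$, so by definition of the $t$-operation $(J\intR(E,D))_v \subseteq (I\intR(E,D))_t$, giving $x \in (I\intR(E,D))_t$. This yields $I_t\,\intR(E,D) \subseteq (I\intR(E,D))_t$; applying $(\cdot)_t$ and using idempotence completes the proof.
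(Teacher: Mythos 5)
Your proof is correct. Parts (1) and (2) follow essentially the same path as the paper: a direct evaluation argument at points of $E$ gives $\left(I \intR(E,D)\right)^{-1}= \intR(E,I^{-1})=\left( \intR(E,I)\right)^{-1}$, and (2) is then obtained by taking inverses and using $(I^{-1})^{-1}=I_v$, $(I_v)^{-1}=I^{-1}$ (the paper gets the $\left(I_v\intR(E,D)\right)_v$ equality from the sandwich $I\intR(E,D)\subseteq I_v\intR(E,D)\subseteq \intR(E,I_v)$ rather than rerunning the computation with $I_v$, but this is an immaterial difference). Where you genuinely diverge is part (3): the paper deduces it from the identity $\left(I\intR(E,D)\right)_v=\left(I_v\intR(E,D)\right)_v$ by invoking Zafrullah's Proposition 2.6 of \cite{Z00}, whereas you give a self-contained argument, writing $I_t$ as a union of $J_v$ over nonzero finitely generated $J\subseteq I$, using (2) for $J$ to place each such $J_v$ inside $\left(J\intR(E,D)\right)_v\subseteq\left(I\intR(E,D)\right)_t$, and then upgrading $I_t\subseteq\left(I\intR(E,D)\right)_t$ to $\left(I_t\intR(E,D)\right)_t\subseteq\left(I\intR(E,D)\right)_t$ via the ideal property and idempotence of $t$. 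In effect you reprove the relevant direction of the cited result; your route costs a few extra lines but removes the external dependence and matches the finitely-generated-subideal technique the paper itself uses in its Proposition on contractions of $t$-ideals, while the paper's citation keeps the proof shorter.
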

\begin{proof}
(1) It is clear that $\intR(E,I)\intR(E,J)\subseteq \intR(E,IJ)$, for any two fractional ideals  $I$ and $J$ of $D$. Then, we infer that $\intR(E,I^{-1}) \subseteq \left(\intR(E,I)\right)^{-1} \subseteq \left(I \intR(E,D)\right)^{-1}$ (the second inclusion follows from the fact that $I \intR(E,D) \subseteq \intR(E,I)$). For the reverse inclusion, let $\varphi \in \left(I \intR(E,D)\right)^{-1}$. For  each element $x$ of  $I$, we have $x\varphi\in \intR(E,D),$ and then  $x\varphi(e)\in D$, for each $e \in E.$ Therefore, $\varphi \in \intR(E,I^{-1})$.

(2) From statement (1), we have $\left(I \intR(E,D)\right)_{v}=\left( \intR(E,I)\right)_{v}=\intR(E,I_{v}).$ So, the statement follows from the inclusions  $I \intR(E,D) \subseteq I_{v} \intR(E,D) \subseteq \intR(E,I_{v})$.

(3) This follows from the fact that $\left(I \intR(E,D)\right)_{v}=\left(I_{v} \intR(E,D)\right)_{v},$ for all nonzero fractional ideals $I$ of $D$, and by using \cite[Proposition 2.6]{Z00}.
\end{proof}
\begin{remark}
Notice that $\intR(E,I_{v})$ is a $v$-ideal of $\intR(E,D)$ because it follows from the previous result that $\left( \intR(E,I_v)\right)_{v}=\intR(E,(I_{v})_v)=\intR(E,I_{v}).$
\end{remark}

\begin{corollary}\label{i-1}
For any nonzero fractional ideal $I$ of $D$, we have $$(I\intR(E,D))^{-1}\cap D\subseteq I^{-1}.$$
\end{corollary}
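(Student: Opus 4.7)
The plan is to use Proposition \ref{tvId}(1), which identifies $(I\intR(E,D))^{-1}$ with $\intR(E,I^{-1})$, and then reduce the claim to a triviality about constant rational functions. Concretely, the statement to prove becomes
\[
\intR(E,I^{-1})\cap D\subseteq I^{-1},
\]
which has almost no content once we unwind what it means for a constant to lie in $\intR(E,I^{-1})$.

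First I would invoke Proposition \ref{tvId}(1) to replace $(I\intR(E,D))^{-1}$ by $\intR(E,I^{-1})$, understood as the set of $\varphi\in K(X)$ such that $\varphi(E)\subseteq I^{-1}$ (the right-hand side $I^{-1}$ is a fractional ideal, not a ring, so $\intR(E,I^{-1})$ is not a ring, but the set-theoretic definition is the only thing we need).

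Next I would take an arbitrary $x\in(I\intR(E,D))^{-1}\cap D$, so that $x\in D$ and, by the rewriting above, $x\in\intR(E,I^{-1})$. Viewed as a constant rational function, $x$ has $x(e)=x$ for every $e\in E$. Since $E$ is nonempty (as standing hypothesis), picking any $e\in E$ yields $x=x(e)\in I^{-1}$, which is exactly the desired containment.

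There is no real obstacle here; the main point is simply that the evaluation of a constant function collapses $x(E)\subseteq I^{-1}$ to $x\in I^{-1}$. The substance is entirely carried by Proposition \ref{tvId}(1), which has already been established.
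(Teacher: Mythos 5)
Your proposal is correct and follows the paper's own route: the paper derives the corollary directly from Proposition \ref{tvId}(1), and your unwinding of the constant-function evaluation (using that $E$ is nonempty) is exactly the implicit final step. Nothing is missing.
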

\begin{proof}
This follows from the first statement of the above proposition.
\end{proof}

An ideal $J$ of $D$ is called a {\it Glaz-Vasconcelos ideal} (in short, GV-ideal), if $J$ is finitely generated and $J^{-1}= D$. The set of all Glaz-Vasconcelos ideals of $D$ is denoted by $\mathrm{GV}(D)$.

\begin{proposition}\label{G-V}
Let $J$ be a nonzero fractional ideal of $D$. If $J\in\mathrm{GV}(D)$ then $J\intR(E,D)\in\mathrm{GV}(\intR(E,D)),$ and the converse holds when $J$ is finitely generated.
\end{proposition}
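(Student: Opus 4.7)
For the forward direction, assume $J \in \mathrm{GV}(D)$, so $J$ is finitely generated with $J^{-1} = D$. Then $J\intR(E,D)$ is automatically finitely generated (by the same generators). To compute its inverse, I would apply Proposition \ref{tvId}(1) directly, which gives
\[
(J\intR(E,D))^{-1} = \intR(E,J^{-1}) = \intR(E,D),
\]
so $J\intR(E,D) \in \mathrm{GV}(\intR(E,D))$.

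For the converse, suppose $J$ is a finitely generated nonzero fractional ideal of $D$ and that $J\intR(E,D) \in \mathrm{GV}(\intR(E,D))$. Since $J$ is finitely generated as an ideal of $D$, what remains to be shown is $J^{-1} = D$. The inclusion $D \subseteq J^{-1}$ is automatic. For the reverse inclusion, take any $x \in J^{-1}$, so $x \in K$ and $xJ \subseteq D \subseteq \intR(E,D)$. This means $x$ multiplies every element of $J\intR(E,D)$ into $\intR(E,D)$, hence $x \in (J\intR(E,D))^{-1} = \intR(E,D)$. Combining, $x \in \intR(E,D) \cap K$, and since $E$ is nonempty any constant in $\intR(E,D)$ must evaluate at any $e\in E$ into $D$, giving $\intR(E,D) \cap K = D$. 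Thus $x \in D$, as desired.

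The proof is essentially a direct application of Proposition \ref{tvId}(1) in both directions; the only subtlety is the need to invoke $\intR(E,D) \cap K = D$ (which relies on the standing hypothesis $E \neq \emptyset$) in the converse. I do not anticipate any serious obstacle; the hypothesis that $J$ be finitely generated in the converse is needed only to ensure that $J$ itself (and not merely its extension) is finitely generated, since finite generation of $J\intR(E,D)$ as an ideal of $\intR(E,D)$ does not in general descend.
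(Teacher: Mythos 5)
Your proof is correct and takes essentially the same route as the paper: the forward direction via Proposition \ref{tvId}(1), and the converse by showing that any $x\in J^{-1}$ lies in $(J\intR(E,D))^{-1}=\intR(E,D)$ and then intersecting with $K$. The only cosmetic difference is that you call $D\subseteq J^{-1}$ ``automatic'' --- since $J$ is only assumed fractional this needs the one-line remark that $J\subseteq\intR(E,D)\cap K=D$ (the paper instead invokes Corollary \ref{i-1}) --- but this is immaterial.
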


\begin{proof}
Assume that $J\in\mathrm{GV}(D)$. By definition, $J$ is  finitely generated and $J^{-1}= D,$ and then $J\intR(E,D)$ is also finitely generated. Hence it follows from Proposition \ref{tvId} that $\left(J \intR(E,D)\right)^{-1}= \intR(E,J^{-1})=\intR(E,D),$ and thus $J\intR(E,D)\in\mathrm{GV}(\intR(E,D)).$ For the converse, assume that $J$ is finitely generated such that $J\intR(E,D)\in\mathrm{GV}(\intR(E,D)).$ First, we deduce from Corollary \ref{i-1} that $D\subseteq J^{-1}$ because $\left(J \intR(E,D)\right)^{-1}= \intR(E,D).$ Now, let $x$ be an element of $J^{-1}$. We have $xJ\subseteq D$, and then $xJ\intR(E,D)\subseteq \intR(E,D)$. Hence, $x\in (J\intR(E,D))^{-1}=\intR(E,D)$ (this equality is due to the fact that $J\intR(E,D)\in\mathrm{GV}(\intR(E,D))$, and thus $x\in \intR(E,D)\cap K=D$. Therefore, $J^{-1}\subseteq D$, and so $J^{-1}=D$.  Combining this last equality with the fact that $J$ is finitely generated, we infer that $J\in \mathrm{GV}(D).$
\end{proof}

For any subset $\mathfrak{I}$ of $K(X)$ and for any element $a$ of $K$, we set $\mathfrak{I}(a)=\{\varphi(a);\;\varphi\in\mathfrak{I}\}$. Clearly, $\intR(E,D)(a)=D,$ for any element $a$ of $E$.
\begin{theorem}\label{Moriw}
For any  nonzero ideal $I$ of $D$, we have $I_{w}\subseteq(I\intR(E,D))_{w}\cap D,$ and the equality holds if $D$ is a Mori domain.
\end{theorem}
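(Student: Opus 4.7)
The plan is to prove the two inclusions separately.

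For the forward inclusion $I_w\subseteq(I\intR(E,D))_w\cap D$, the argument is direct: pick $x\in I_w$ together with a witness $J\in\mathrm{GV}(D)$ such that $xJ\subseteq I$. Proposition \ref{G-V} upgrades this to $J\intR(E,D)\in\mathrm{GV}(\intR(E,D))$, and since $x\bigl(J\intR(E,D)\bigr)=(xJ)\intR(E,D)\subseteq I\intR(E,D)$, we have $x\in(I\intR(E,D))_w$. Moreover, $x\in K$ together with $xJ\subseteq D$ and $J^{-1}=D$ forces $x\in J^{-1}=D$, so $x$ lies in the intersection.

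For the reverse inclusion under the Mori hypothesis, let $x\in(I\intR(E,D))_w\cap D$ and fix a GV-witness $\mathfrak{J}=(\varphi_1,\ldots,\varphi_n)\intR(E,D)$ with $\mathfrak{J}^{-1}=\intR(E,D)$ and $x\mathfrak{J}\subseteq I\intR(E,D)$. The natural candidate ideal of $D$ built from the values of the $\varphi_i$ on $E$ is
\[
J:=\sum_{i=1}^{n}\sum_{e\in E}D\,\varphi_i(e)\ \subseteq\ D.
\]
Evaluating at any $e\in E$ gives $x\varphi_i(e)\in(I\intR(E,D))(e)=I$, so $xJ\subseteq I$. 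To see that $J^{-1}=D$, take $y\in J^{-1}$: then $y\varphi_i(e)\in D$ for every $i$ and every $e\in E$, i.e.\ $y\varphi_i\in\intR(E,D)$ for every $i$, which means $y\in\mathfrak{J}^{-1}\cap K=\intR(E,D)\cap K=D$.

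The main obstacle is that the ideal $J$ constructed above need not be finitely generated, so it is not itself a GV-witness; this is precisely where the Mori hypothesis enters. Since $D$ is Mori, the $v$- and $t$-operations coincide on $D$, hence $J_t=J_v=(J^{-1})^{-1}=D$. Writing $J_t$ as the directed union $\bigcup F_v$ over finitely generated ideals $F\subseteq J$, the element $1\in D=J_t$ lies in some $F_v$; since $F\subseteq J\subseteq D$ forces $F_v\subseteq D$, we actually obtain $F_v=D$, equivalently $F^{-1}=D$. Thus $F\in\mathrm{GV}(D)$ and $xF\subseteq xJ\subseteq I$, which yields $x\in I_w$.

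In short, the crux is the descent from a GV-witness in $\intR(E,D)$ to one in $D$: the obvious construction by evaluation produces a potentially infinitely generated ideal of $D$, and Mori-ness, through the equality $v=t$, is exactly what permits pruning it down to a finitely generated GV-ideal.
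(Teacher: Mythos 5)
Your proposal is correct and takes essentially the same route as the paper: the forward inclusion via Proposition \ref{G-V} is identical, and for the reverse inclusion you construct the same ideal of $D$ by evaluating the GV-witness of $\intR(E,D)$ at the points of $E$, prove $xJ\subseteq I$, and show the relevant inverse equals $D$ by the same evaluation argument. The only cosmetic difference is how the Mori hypothesis is invoked: the paper cites Barucci's theorem to extract a finite subset of the generators with the same inverse, while you use the equality $v=t$ in Mori domains together with the union description of the $t$-operation to pull out a finitely generated GV-subideal --- the same underlying fact, so this is a minor variation rather than a different argument.
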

\begin{proof}
Let $I$ be a nonzero ideal of $D$ and $x$ an element of $I_{w}$. We have $xJ\subseteq I,$ for some $J\in  \mathrm{GV}(D)$, and then $xJ\intR(E,D)\subseteq I\intR(E,D)$. Hence, by Proposition \ref{G-V}, $J\intR(E,D)\in\mathrm{GV}(\intR(E,D))$, and thus $x\in (I\intR(E,D))_{w}\cap D$. Therefore, $I_{w}\subseteq (I\intR(E,D))_{w}\cap D$.

Now, assume that $D$ is Mori and let $x\in (I\intR(E,D))_{w}\cap D$. We have $xJ\subseteq I\intR(E,D)$ for some $J\in \mathrm{GV}(\intR(E,D))$, and then $J(e)\subseteq D$ and $xJ(e)\subseteq I,$ for all $e\in E$. So, let us consider $J'$  the ideal of $D$ generated by the union $\cup_{e\in E}J(e)$. Since $D$ is a Mori domain, it follows from \cite[Theorem 2.1\rm(2)]{B00} that there exists a finite subset of $\cup_{e\in E}J(e)$, namely $\{a_1,...,a_n\}$, such that $J'^{-1}=(a_1,...,a_n)^{-1}.$ Hence, from the fact that $xJ'\subseteq I$ it follows that $x(a_1,...,a_n)\subseteq I,$ and so we need only to show that $(a_1,...,a_n)\in\mathrm{GV}(D),$ i.e., $(a_1,...,a_n)^{-1}=D.$ To do this, let $y\in(a_1,...,a_n)^{-1}$. Since $J'^{-1}=(a_1,...,a_n)^{-1},$ $yJ'\subseteq D$ and then $y\varphi(e)\in D,$ for each $\varphi\in J$ and each $e\in E.$ Hence, $y\varphi\in\intR(E,D),$ for each $\varphi\in J,$ which means that $yJ\subseteq \intR(E,D).$ Thus, $y\in J^{-1}=\intR(E,D)$ because $J\in \mathrm{GV}(\intR(E,D)),$ and therefore, $y\in\intR(E,D)\cap K=D.$ Consequently, $(a_1,...,a_n)\in\mathrm{GV}(D),$ and this proves the desired equality.
\end{proof}

We conclude this section by asking the following question:
\begin{question} 
Does the $w$-analogue of the third statement of Proposition \ref{tvId} hold true in general, or at least for Mori domains?
\end{question}

\section{The transfer of some properties in  $\intR(E,D)$}\label{sec4}
In this section, we aim to investigate the transfer of some ring-theoretic properties to rings of integer-valued rational functions.

\medskip

We start with a complete characterization of when $\intR(E,D)$ is integrally closed, which proposed in \cite{C97} as an exercise.	 
\begin{proposition}[{\cite[Exercise X.2]{C97}}]\label{11}
The integral domain $\intR(E,D)$ is integrally closed if and only if so is $D$.
\end{proposition}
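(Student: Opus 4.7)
The plan is to prove both implications directly, working inside $K(X)$ throughout. Let $F$ denote the quotient field of $\intR(E,D)$; since $D\subseteq\intR(E,D)\subseteq K(X)$ and $D$ is a domain, we have $K\subseteq F\subseteq K(X)$. Also, since a constant function in $K(X)$ belongs to $\intR(E,D)$ exactly when it takes values in $D$ (which it automatically does for every $e\in E$ since it is constant), we have $\intR(E,D)\cap K=D$.

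For the easy direction, suppose $\intR(E,D)$ is integrally closed and let $x\in K$ be integral over $D$. Then $x$, viewed as a constant element of $K(X)$, satisfies the same monic equation over $\intR(E,D)\supseteq D$ and lies in $F$, so integral closedness forces $x\in\intR(E,D)\cap K=D$. Hence $D$ is integrally closed.

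For the converse, assume $D$ is integrally closed and let $\varphi\in F\subseteq K(X)$ be integral over $\intR(E,D)$, satisfying
\[
\varphi^n+f_{n-1}\varphi^{n-1}+\cdots+f_0=0
\]
with $f_i\in\intR(E,D)$. The argument proceeds in two steps. First, I would verify that $\varphi$ is defined at every $e\in E$: each $f_i$ is defined at $e$, hence lies in the DVR $K[X]_{(X-e)}\subseteq K(X)$, so $\varphi$ is integral over this DVR; since DVRs are integrally closed, $\varphi\in K[X]_{(X-e)}$, i.e., $\varphi$ has no pole at $e$. Second, evaluating the integrality relation at $e$ yields
\[
\varphi(e)^n+f_{n-1}(e)\varphi(e)^{n-1}+\cdots+f_0(e)=0,
\]
with each $f_i(e)\in D$. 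Thus $\varphi(e)\in K$ is integral over $D$, and integral closedness of $D$ gives $\varphi(e)\in D$. Since this holds for every $e\in E$, we obtain $\varphi\in\intR(E,D)$.

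The only subtlety is the first step of the converse: a priori $\varphi$ could have a pole at some $e\in E$, which would make the evaluation argument vacuous. The DVR observation is exactly what rules this out and is, I expect, the single nontrivial ingredient in the proof.
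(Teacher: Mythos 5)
Your proof is correct and follows essentially the same route as the paper: evaluate the integral dependence relation at each $e\in E$ and use integral closedness of $D$, and obtain the converse from $D=\intR(E,D)\cap K$. The only difference is that you explicitly justify, via the DVR $K[X]_{(X-e)}$, that an element integral over $\intR(E,D)$ has no pole at any $e\in E$ before evaluating --- a point the paper's proof passes over silently --- so your write-up is, if anything, slightly more careful.
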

	 
\begin{proof}
Suppose that $D$ is integrally closed and let $\psi \in K(X)$ be integral over $\intR(E,D)$. We have: 
	 	$$\psi^{n}+\varphi_{n-1}\psi^{n-1}+\dots +\varphi_{1}\psi +\varphi_{0}=0,$$
	 	where $\varphi_{i}\in\intR(E,D)$ for each $i$. Then, for each $e\in E$, we have : 
	 	$$\psi^{n}(e)+\varphi_{n-1}(e)\psi^{n-1}(e)+\dots +\varphi_{1}(e)\psi(e) +\varphi_{0}(e)=0,$$
and hence $\psi(e)$ is integral over $D$. Therefore $\psi(e)\in D$, for each $e\in E$, that is, $\psi \in \intR(E,D)$.  For the converse, if $\intR(E,D)$ is integrally closed  then so is $D$ because $D=\intR(E,D)\cap K$ (an intersection of two integrally closed domains). 
\end{proof}
	 
\begin{remark}
This last proposition implies that the integral closure of $\intR(E,D)$ is contained in $\intR(E,D^{\prime}),$ where $D^{\prime}$ denotes the integral closure of $D$.
\end{remark}	 

An integral domain $D$ with quotient field $K$ is said to be \textit{seminormal} if, for each $\alpha\in K$, whenever $\alpha^2,\alpha^3\in D$, then $\alpha\in D$. Example of seminormal domains are Pr\"ufer domains. It is noteworthy that $\mathrm{Int}(D)$ is a seminormal domain  if and only if so is $D$ (cf. \cite[Proposition 7.2\rm(3)]{AAZ91}). We next provide an analogue of this result for $\intR(E,D)$.
\begin{proposition}
The integral domain $\intR(E,D)$ is seminormal if and only if so is $D$.
\end{proposition}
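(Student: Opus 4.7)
My plan is to adapt the proof strategy used for Proposition~\ref{11} to the seminormal setting, since seminormality is also a closure-type property defined by an implication of the form ``$\alpha^{2},\alpha^{3}$ in the ring $\Rightarrow$ $\alpha$ in the ring'' inside the quotient field. Both directions should follow by pointwise evaluation on $E$, exactly as in the integral-closure case.

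For the ``only if'' direction, I would assume $\intR(E,D)$ is seminormal and pick $\alpha\in K$ with $\alpha^{2},\alpha^{3}\in D$. Viewed as constant rational functions, $\alpha^{2}$ and $\alpha^{3}$ lie in $\intR(E,D)$, so seminormality of $\intR(E,D)$ gives $\alpha\in\intR(E,D)$. Since $\intR(E,D)\cap K=D$ (a constant rational function sending $E$ into $D$ must already be an element of $D$), we conclude $\alpha\in D$. This step is essentially immediate, and it is the analogue of the ``reverse'' half of the proof of Proposition~\ref{11}.

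For the ``if'' direction, I would suppose $D$ is seminormal and take $\psi$ in the quotient field of $\intR(E,D)$ (a subfield of $K(X)$) satisfying $\psi^{2},\psi^{3}\in\intR(E,D)$. For each $e\in E$, the values $\psi^{2}(e)$ and $\psi^{3}(e)$ lie in $D$. Writing $\psi=f/g$ with $f,g\in K[X]$ coprime, the coprimality of $f^{2}$ and $g^{2}$ in the UFD $K[X]$ shows that $\psi$ and $\psi^{2}$ have the same pole set, so $\psi(e)$ is well-defined in $K$. Then $\psi(e)^{2}=\psi^{2}(e)\in D$ and $\psi(e)^{3}=\psi^{3}(e)\in D$, and seminormality of $D$ yields $\psi(e)\in D$. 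As this holds for every $e\in E$, we obtain $\psi\in\intR(E,D)$.

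The only mild technical point is the pointwise evaluation argument in the ``if'' direction, namely that $\psi$ itself is defined wherever $\psi^{2}$ is; this is exactly the coprimality remark above (and the authors tacitly use the same thing in the proof of Proposition~\ref{11}). Once this is in place, the proof is a direct translation of the integrally closed argument, with ``integral equation'' replaced by the single defining implication of seminormality applied at each point of $E$.
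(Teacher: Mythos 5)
Your proof is correct and takes essentially the same route as the paper's: both directions reduce to pointwise evaluation on $E$, using $\intR(E,D)\cap K=D$ for the ``only if'' part and the seminormality of $D$ applied to $\psi(e)$ for the ``if'' part. Your additional coprimality remark, showing $\psi$ and $\psi^{2}$ have the same pole set so that $\psi(e)$ is defined for $e\in E$, is a detail the paper leaves tacit, and you handle it correctly.
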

\begin{proof}
Assume that $\intR(E,D)$ is seminormal and let $\alpha \in K$ such that $\alpha^{2},\alpha^{3}\in D$. Since $D\subseteq \intR(E,D)$, we have $\alpha^{2},\alpha^{3}\in \intR(E,D)$, and then by seminormality of $\intR(E,D)$, $\alpha \in \intR(E,D)$. Hence, $\alpha \in D$ and thus $D$ is seminormal. Conversely, assume that  $D$ is seminormal and let $\varphi \in K(X)$ with $\varphi^{2},\varphi^{3}\in \intR(E,D)$. We have $\varphi^{2},\varphi^{3}\in \intR(E,D)$ implies that $\varphi^{2}(e),\varphi^{3}(e)\in D,$ for all $e\in E$, and hence $\varphi(e)\in D$ by seminormality of $D$, for all $e\in E$. Thus,  $\varphi \in \intR(E,D),$ and the proof is done. 
\end{proof}

In the following, we will provide a necessary condition on $D$ for which the integral domain $\intR(E,D)$ is either Krull, Mori, almost Krull, or locally Mori.
\begin{proposition}\label{transf}
Let $(\mathcal{P})$ denotes one of the following properties for domains: Krull, Mori, almost Krull or locally Mori. If $\intR(E,D)$ has the property $(\mathcal{P})$ then $D$ has the same property.
\end{proposition}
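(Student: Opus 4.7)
The plan is to treat Mori and Krull globally, with Krull reducing to Mori plus complete integral closure, and then to reduce almost Krull and locally Mori to the corresponding global properties of the localization $R := \intR(E,D)_{\mathfrak{M}_{\mathfrak{m},a}}$ via the pointed maximal ideals of Definition \ref{Pointed}.

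For the Mori case, I would start with an ascending chain $I_1 \subseteq I_2 \subseteq \cdots$ of divisorial ideals of $D$. Proposition \ref{tvId}(2) gives $(I_n \intR(E,D))_v = \intR(E, I_n)$ for each $n$, yielding an ascending chain of divisorial ideals of $\intR(E,D)$ that stabilizes by the Mori hypothesis. Contracting to $K$ then stabilizes the original chain because $\intR(E, I_n) \cap K = I_n$: indeed a constant $x \in K$ belongs to $\intR(E, I_n)$ exactly when $x \in I_n$. For Krull I would invoke the characterization \emph{Krull $=$ Mori $+$ completely integrally closed}: Mori descends as above, while if $\alpha \in K$ and $c\alpha^n \in D$ for all $n \geq 0$ with $0 \neq c \in D$, then $c\alpha^n \in \intR(E,D)$, so $\alpha \in \intR(E,D)$ by c.i.c. of $\intR(E,D)$, and hence $\alpha \in \intR(E,D) \cap K = D$.

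For almost Krull (resp.\ locally Mori), I would fix a maximal ideal $\mathfrak{m}$ of $D$, choose any $a \in E$, and set $R := \intR(E,D)_{\mathfrak{M}_{\mathfrak{m},a}}$. By Proposition \ref{Notation}(1), $\mathfrak{M}_{\mathfrak{m},a}$ has residue field $D/\mathfrak{m}$, hence is maximal in $\intR(E,D)$, so $R$ inherits the Krull (resp.\ Mori) property; by Lemma \ref{Lolem}, $R \cap K = D_{\mathfrak{m}}$. I would then re-run the global arguments at the localized level with the identity
\[
(IR)_v \cap K = I_v,
\]
for every nonzero fractional ideal $I$ of $D_{\mathfrak{m}}$ (with the $v$-operation on the left computed in $R$ and on the right in $D_{\mathfrak{m}}$), replacing Proposition \ref{tvId}(2). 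This identity is proved directly: $(IR)^{-1} = \{\varphi \in K(X) : \varphi I \subseteq R\}$ contains $I^{-1}$ (taken in $D_{\mathfrak{m}}$) as constants, so for $x \in K \cap (IR)_v$ one has $x I^{-1} \subseteq R \cap K = D_{\mathfrak{m}}$, whence $x \in I_v$; the reverse inclusion $I_v \subseteq (IR)_v \cap K$ is immediate. With this in hand, the ACC transfer gives $D_{\mathfrak{m}}$ Mori, and the c.i.c. transfer copies verbatim at the localized level. Therefore $D$ is locally Mori (resp.\ almost Krull).

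The main technical point is precisely this local analogue of the $v$-identity, since Proposition \ref{tvId} applies only to the global ring $\intR(E,D)$; however, the obstacle is overcome by the elementary observation that the constants of $I^{-1}$ always embed into $(IR)^{-1}$. Everything else, namely the ACC stabilization and the complete integral closure transfer, is then routine.
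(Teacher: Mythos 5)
Your proposal is correct, but it follows a genuinely different route from the paper's. For the global properties (Krull, Mori) the paper argues in one line: since $D=\intR(E,D)\cap K$, it suffices to invoke the classical stability of Krull (resp.\ Mori) domains under intersection, here with the field $K$ inside $K(X)$. You instead give a self-contained ideal-theoretic argument: you transfer an ascending chain of divisorial ideals of $D$ through the identity $\left(I\intR(E,D)\right)_v=\intR(E,I_v)$ of Proposition \ref{tvId}, contract back via $\intR(E,I)\cap K=I$, and treat Krull as Mori plus completely integrally closed, the latter descending trivially through constants and $\intR(E,D)\cap K=D$. For almost Krull and locally Mori, both you and the paper localize at the pointed maximal ideals $\mathfrak{M}_{\mathfrak{m},a}$ and use $D_{\mathfrak{m}}=\intR(E,D)_{\mathfrak{M}_{\mathfrak{m},a}}\cap K$ (Lemma \ref{Lolem} and Proposition \ref{Notation}); the paper then simply reuses the intersection fact, whereas you re-run the chain and complete-integral-closure arguments over $R=\intR(E,D)_{\mathfrak{M}_{\mathfrak{m},a}}$, which forces you to supply the local substitute $(IR)_v\cap K=I_v$. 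Your verification of that identity is sound in the only case you actually use it, namely $I$ divisorial, where the inclusion $I=I_v\subseteq IR\cap K\subseteq (IR)_v\cap K$ really is immediate; for a general fractional ideal the asserted ``immediate'' inclusion $I_v\subseteq(IR)_v$ is a $v$-compatibility statement that would require an argument (compare Lemma \ref{lstar}, whose proof needs finite generation and evaluation at points of $E$), so you should state the local identity for divisorial ideals only. The trade-off is clear: the paper's proof is a short reduction to known stability results for Krull and Mori domains, while yours is longer but elementary and self-contained, resting only on results already established in the paper together with the standard characterizations Mori $=$ ACC on integral divisorial ideals and Krull $=$ Mori $+$ completely integrally closed.
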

\begin{proof}
Since $D=\intR(E,D)\cap K$ and the intersection of two Krull (resp., Mori) domains is a Krull (resp., Mori) domain, if  $\intR(E,D)$ is Krull (resp., Mori) then  $D$ is also Krull (resp., Mori). Now, assume that $\intR(E,D)$ is an almost Krull domain, and let $\mathfrak{m}$ be a maximal ideal of $D$ and $a$ an element of $E$. It follows from Proposition \ref{Notation} that $\mathfrak{M}_{\mathfrak{m},a}$ is a maximal ideal of $\intR(E,D)$ and then $\intR(E,D)_{\mathfrak{M}_{\mathfrak{m},a}}$ is a Krull domain. Again, by Proposition \ref{Notation}, $D_\mathfrak{m}=\intR(E,D)_{\mathfrak{M}_{\mathfrak{m},a}}\cap K,$ and so $D_\mathfrak{m}$ is a Krull domain as an intersection of two Krull domains. Therefore, $D$ is an almost Krull domain. Lastly, the case of locally Mori is similar to that of almost Krull.
\end{proof}
For strong Mori domains, we have the following:
\begin{proposition}
If $\intR(E,D)$ is a strong Mori domain then so is $D$.
\end{proposition}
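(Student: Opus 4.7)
The plan is to mimic the argument used for the Mori case in Proposition \ref{transf}, but now working with $w$-ideals and invoking Theorem \ref{Moriw} to relate $w$-closures in $D$ with those in $\intR(E,D)$.

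First, I would observe that every strong Mori domain is Mori, so by Proposition \ref{transf} the hypothesis forces $D$ to be Mori. This is crucial, because it is precisely under the Mori hypothesis that Theorem \ref{Moriw} yields the \emph{equality} $I_w = (I\intR(E,D))_w \cap D$ for every nonzero ideal $I$ of $D$.

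Next, to verify the ascending chain condition on $w$-ideals of $D$, I would take an arbitrary ascending chain
\[
I_1 \subseteq I_2 \subseteq I_3 \subseteq \cdots
\]
of $w$-ideals of $D$ and associate to it the chain
\[
(I_1\intR(E,D))_w \subseteq (I_2\intR(E,D))_w \subseteq (I_3\intR(E,D))_w \subseteq \cdots
\]
of $w$-ideals of $\intR(E,D)$. Since $\intR(E,D)$ is assumed to be strong Mori, this extended chain stabilizes: there exists $N$ such that $(I_n\intR(E,D))_w = (I_N\intR(E,D))_w$ for all $n \geq N$.

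Finally, I would contract back to $D$. Intersecting with $D$ and applying the equality part of Theorem \ref{Moriw} to each $I_n$ (which is legitimate because each $I_n$ is a $w$-ideal and $D$ is Mori), one obtains
\[
I_n = (I_n)_w = (I_n\intR(E,D))_w \cap D = (I_N\intR(E,D))_w \cap D = I_N
\]
for all $n \geq N$, so the original chain stabilizes. I do not anticipate a genuine obstacle here: the two nontrivial ingredients, namely the transfer of the Mori property (Proposition \ref{transf}) and the equality $I_w = (I\intR(E,D))_w \cap D$ over Mori domains (Theorem \ref{Moriw}), are already established, and the remainder is a routine chain-condition argument.
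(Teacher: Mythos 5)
Your proof is correct, and it reaches the conclusion by a somewhat different route than the paper, although both arguments ultimately rest on the same two ingredients: the transfer of the Mori property (Proposition \ref{transf}) and the equality $I_w=(I\,\intR(E,D))_w\cap D$ for Mori $D$ (Theorem \ref{Moriw}). The paper does not verify the ascending chain condition directly; instead it invokes the characterization of strong Mori domains from \cite[Corollary 3.4]{CHK12} (every $w$-prime ideal is of $w$-finite type), picks a $w$-prime $\mathfrak{p}$ of $D$, uses strong Mori-ness of $\intR(E,D)$ to write $(\mathfrak{p}\,\intR(E,D))_w=(I\,\intR(E,D))_w$ for a finitely generated $I\subseteq\mathfrak{p}$, and then contracts via Theorem \ref{Moriw} to get $\mathfrak{p}=I_w$. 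Your argument works straight from the definition: you extend an ascending chain of $w$-ideals of $D$ to the chain of $w$-closures of the extended ideals in $\intR(E,D)$, let the ACC there stabilize it, and contract back using the equality of Theorem \ref{Moriw}, which is legitimate since $D$ is Mori by Proposition \ref{transf}. What your version buys is self-containedness (no appeal to the external finite-type characterization) at the cost of handling chains rather than a single prime; what the paper's version buys is brevity and the extra information that each $w$-prime of $D$ is $w$-closed of a finitely generated ideal of $D$. The only point worth stating explicitly in your write-up is the trivial one that the $w$-operation is order-preserving (so the extended chain is indeed ascending) and that one may assume the ideals in the chain are nonzero, since Theorem \ref{Moriw} is stated for nonzero ideals.
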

\begin{proof}
Assume that $\intR(E,D)$ is a strong Mori domain. By \cite[Corollary 3.4]{CHK12}, we need to show that every $w$-prime ideal of $D$ is of $w$-finite type.  So, let $\mathfrak{p}$ be a $w$-prime ideal of $D$. Since $\intR(E,D)$ is a strong Mori domain, $(\mathfrak{p}\intR(E,D))_{w}$ is of $w$-finite type, and then $(\mathfrak{p}\intR(E,D))_{w} = (I\intR(E,D))_{w},$ for some finitely generated subideal $I$ of $\mathfrak{p}$. By Proposition \ref{transf}, $D$ is a Mori domain because any strong Mori domain is Mori, and then, by Theorem \ref{Moriw}, $\mathfrak{p}_{w}=(\mathfrak{p}\intR(E,D))_{w}\cap D=(I\intR(E,D))_{w}\cap D=I_{w}$.  Thus, $\mathfrak{p} = I_{w}$ is of $w$-finite type, and therefore $D$ is strong Mori.
\end{proof}
\begin{remark}
It is important to note that the converse of the previous two propositions does not hold in general. For instance, while the ring of integers $\mathbb{Z}$ is locally Mori (as it is a PID), $\intR(\mathbb{Z})$ is not locally Mori, and consequently it is neither (almost) Krull nor (strong) Mori. This is because any Pr\"ufer locally Mori domain must be one-dimensional, whereas $\intR(\mathbb{Z})=\mathrm{Int}(\mathbb{Z})$ is known to be a two-dimensional Pr\"ufer domain.
\end{remark}
We now present the analogue of \cite[Proposition 1.1 and Corollary 1.2]{TT22} for $\intR(E,D)$.
\begin{proposition}\label{tFin}
If $\intR(E,D)$ has finite $($resp., $t$-finite$)$ character then so is $D$.
\end{proposition}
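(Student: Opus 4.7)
The plan is to lift (t-)maximal ideals of $D$ containing a fixed nonzero element $d$ to distinct (t-)maximal ideals of $\intR(E,D)$ still containing $d$, and then invoke the hypothesis on $\intR(E,D)$. The main tools are the pointed maximal ideals from Definition \ref{Pointed} together with Proposition \ref{Notation} for the finite character case, and Corollary \ref{Str} for the $t$-finite character case.

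For the finite character case, I would fix a nonzero $d\in D$, pick any $a\in E$ (the hypothesis $E\neq\emptyset$ is in force throughout), and consider the assignment $\mathfrak{m}\mapsto\mathfrak{M}_{\mathfrak{m},a}$ on the set of maximal ideals of $D$ containing $d$. By Proposition \ref{Notation}(1), $\intR(E,D)/\mathfrak{M}_{\mathfrak{m},a}\simeq D/\mathfrak{m}$, so $\mathfrak{M}_{\mathfrak{m},a}$ is a maximal ideal of $\intR(E,D)$, and by Proposition \ref{Notation}(2) it lies over $\mathfrak{m}$. Since $d$ (viewed as a constant function) satisfies $d(a)=d\in\mathfrak{m}$, we get $d\in\mathfrak{M}_{\mathfrak{m},a}$. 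Distinct $\mathfrak{m}$'s yield distinct $\mathfrak{M}_{\mathfrak{m},a}$'s because their contractions to $D$ differ. Thus the number of maximal ideals of $D$ containing $d$ is bounded by the number of maximal ideals of $\intR(E,D)$ containing $d$, which is finite by hypothesis.

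For the $t$-finite character case, I would argue similarly but use Corollary \ref{Str} in place of the pointed-ideal construction, since a pointed prime over a $t$-maximal ideal need not itself be $t$-maximal in $\intR(E,D)$. Given a nonzero $d\in D$ and a $t$-maximal ideal $\mathfrak{m}$ of $D$ containing $d$, Corollary \ref{Str} produces a $t$-maximal ideal $\mathfrak{M}$ of $\intR(E,D)$ with $\mathfrak{M}\cap D=\mathfrak{m}$; then $d\in\mathfrak{m}\subseteq\mathfrak{M}$. Choosing, via the axiom of choice, one such lift $\mathfrak{M}$ for each $\mathfrak{m}$, distinct $\mathfrak{m}$'s give distinct $\mathfrak{M}$'s (distinct contractions), so the $t$-finite character of $\intR(E,D)$ forces only finitely many such $\mathfrak{m}$'s.

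There is no real obstacle here beyond recognising that Proposition \ref{Notation} and Corollary \ref{Str} already do all the work; the delicate point is merely the asymmetry between the two cases, namely that for maximal ideals one has an \emph{explicit} lift $\mathfrak{M}_{\mathfrak{m},a}$ that is automatically maximal, while for $t$-maximal ideals one has to rely on the existence statement of Corollary \ref{Str} because pointed primes associated to $t$-maximal ideals are not guaranteed to be $t$-maximal in $\intR(E,D)$.
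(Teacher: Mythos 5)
Your proof is correct and follows essentially the same route as the paper: pointed maximal ideals $\mathfrak{M}_{\mathfrak{m},a}$ (via Proposition \ref{Notation}) for the finite character case and Corollary \ref{Str} for the $t$-finite character case, the only difference being that the paper phrases the argument by contraposition rather than by directly bounding the number of (t-)maximal ideals of $D$ containing a fixed nonzero element.
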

\begin{proof}
By contraposition, assume that $D$ has not finite character. Then there is exists a nonzero element $x$ of $D$ that is contained in infinitely many maximal ideals of $D$, and hence it is contained in infinitely many maximal ideals of $\intR(D)$ (As a matter of fact: for any maximal ideal $\mathfrak{m}$ of $D$, we can consider the maximal pointed ideal $\mathfrak{M}_{\mathfrak{m},a}$ for some element $a$ of $E$). Thus, $\intR(E,D)$ has not finite character. Now, suppose that $D$ has not  $t$-finite character. Then there is a nonzero element of $D$ which is contained in infinitely many $t$-maximal ideals of $D$. Then, by Corollary \ref{Str}, each $t$-maximal ideal of $D$ is the contraction of a $t$-maximal ideal of $\intR(E,D)$, and so $\intR(E,D)$ has not $t$-finite character. 
\end{proof}
From the previous result, we derive immediately the following:
\begin{corollary}
If $\intR(D)$ has finite $($resp., $t$-finite$)$ character then so is $D$.
\end{corollary}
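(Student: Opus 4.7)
The plan is essentially immediate: this corollary is a direct specialization of Proposition \ref{tFin}. Recall from the introduction that $\intR(D)$ is defined as $\intR(D,D)$, i.e.\ the case $E=D$ of the general construction $\intR(E,D)$. Therefore I would simply apply Proposition \ref{tFin} with the choice $E:=D$, which is a legitimate choice since $D$ is certainly a nonempty subset of its quotient field $K$.

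Concretely, the plan is as follows. First I would observe that the hypothesis $\intR(D)$ has finite character (resp.\ $t$-finite character) is, by the notational convention established after Lemma~\ref{nwe} and Proposition~\ref{d1}, exactly the statement that $\intR(D,D)$ has finite character (resp.\ $t$-finite character). Next, I would invoke Proposition \ref{tFin} applied to the subset $E=D\subseteq K$; this yields that $D$ has finite character (resp.\ $t$-finite character), as required.

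There is no real obstacle here; the corollary is stated separately only to record the $E=D$ case as a clean, often-quoted special case of the preceding proposition. Accordingly, the proof will be a single line of the form: \emph{Take $E=D$ in Proposition~\ref{tFin}.}
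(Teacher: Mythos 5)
Your proposal is correct and coincides with the paper's own derivation: the corollary is stated immediately after Proposition~\ref{tFin} precisely as its special case $E=D$, using the convention $\intR(D)=\intR(D,D)$. Nothing further is needed.
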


 We  recall the following definition of locally finite intersection of integral domains. Let $\{D_\alpha\}_{\alpha\in\Lambda}$ be a family of integral domains having  the same quotient field. The intersection $\cap_{\alpha\in\Lambda}D_\alpha =: D$ is said to be \textit{locally finite} if every nonzero element of $D$ is a unit in $D_\alpha$ for all but finitely many $\alpha\in\Lambda$. In particular, if each $D_\alpha$ is local with maximal ideal $\mathfrak{m}_\alpha$, the above intersection is locally finite if and only if each nonzero element of $D$ belongs to only finitely many ideals $\mathfrak{m}_\alpha$.
\begin{proposition}\label{f}
If $\intR(E,D)$ is a locally finite intersection of a family of its localizations then so is $D$.
\end{proposition}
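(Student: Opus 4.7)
The plan is to lift the locally finite representation from $\intR(E,D)$ down to $D$ by intersecting with $K$, using the fundamental contraction identity from Lemma \ref{Lolem}. Specifically, suppose $\intR(E,D)=\bigcap_{\alpha\in\Lambda}\intR(E,D)_{\mathfrak{Q}_\alpha}$ is a locally finite intersection of localizations at prime ideals $\mathfrak{Q}_\alpha$ of $\intR(E,D)$, and set $\mathfrak{p}_\alpha:=\mathfrak{Q}_\alpha\cap D$. My claim is that $D=\bigcap_{\alpha\in\Lambda}D_{\mathfrak{p}_\alpha}$ is itself a locally finite intersection of localizations of $D$.

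First I would establish the intersection equality. Intersecting both sides of the hypothesis with $K$ and applying Lemma \ref{Lolem}, which gives $\intR(E,D)_{\mathfrak{Q}_\alpha}\cap K=D_{\mathfrak{p}_\alpha}$, one obtains
\[
D=\intR(E,D)\cap K=\Bigl(\bigcap_{\alpha}\intR(E,D)_{\mathfrak{Q}_\alpha}\Bigr)\cap K=\bigcap_{\alpha}\bigl(\intR(E,D)_{\mathfrak{Q}_\alpha}\cap K\bigr)=\bigcap_{\alpha}D_{\mathfrak{p}_\alpha},
\]
so $D$ is expressed as an intersection of a family of its localizations at the prime ideals $\mathfrak{p}_\alpha$.

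Next I would verify local finiteness. Let $d$ be a nonzero element of $D$. Since $d\in\intR(E,D)\setminus\{0\}$ and the given representation of $\intR(E,D)$ is locally finite, $d$ is a unit in $\intR(E,D)_{\mathfrak{Q}_\alpha}$ for all but finitely many $\alpha\in\Lambda$. For each such $\alpha$, being a unit in the local ring $\intR(E,D)_{\mathfrak{Q}_\alpha}$ forces $d\notin\mathfrak{Q}_\alpha$, whence $d\notin\mathfrak{Q}_\alpha\cap D=\mathfrak{p}_\alpha$, so $d$ is a unit in $D_{\mathfrak{p}_\alpha}$. Thus the intersection $D=\bigcap_{\alpha}D_{\mathfrak{p}_\alpha}$ is locally finite, completing the proof.

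The whole argument is essentially formal once Lemma \ref{Lolem} is invoked, so there is no serious obstacle; the only point requiring care is ensuring the contraction step commutes with the infinite intersection, which is trivial since $K$ is a fixed set. If one wished to handle more general multiplicative-set localizations $R_\alpha=S_\alpha^{-1}\intR(E,D)$ rather than localizations at primes, one would simply set $S_\alpha'=S_\alpha\cap D$ and run the same calculation, using $R_\alpha\cap K=(S_\alpha')^{-1}D$ together with the same unit-preservation argument.
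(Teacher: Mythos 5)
Your proof is correct and follows essentially the same route as the paper: contract the primes to $D$, intersect the given representation with $K$ using Lemma \ref{Lolem}, and observe that local finiteness descends since a nonzero $d\in D$ that is a unit in $\intR(E,D)_{\mathfrak{Q}_\alpha}$ lies outside $\mathfrak{Q}_\alpha\cap D$. Your write-up merely makes explicit the unit-preservation step that the paper states in one line.
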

\begin{proof}
Suppose that $\intR(E,D) = \cap_{\mathfrak{P}\in \mathcal{P}}\intR(E,D)_{\mathfrak{P}}$, where $\mathcal{P} \subseteq  \mathrm{Spec}(\intR(E,D))$ such that the intersection $\cap_{\mathfrak{P}\in \mathcal{P}}\intR(E,D)_{\mathfrak{P}}$ is locally finite. We set $\mathcal{P}^{\prime} := \lbrace \mathfrak{P} \cap D; \mathfrak{P} \in \mathcal{P}\rbrace$. By Lemma \ref{Lolem}, we have  $D_{\mathfrak{P}\cap D}=\intR(E,D)_{\mathfrak{P}} \cap K$, and then $\cap_{\mathfrak{p}\in \mathcal{P}^{\prime}} D_{\mathfrak{p}}=\cap_{\mathfrak{P}\in \mathcal{P}}(\intR(E,D)_{\mathfrak{P}} \cap K)=\intR(E,D) \cap K=D.$
Thus, $D = \cap_{\mathfrak{p}\in \mathcal{P}^{\prime}} D_{\mathfrak{p}}$ and this intersection is locally finite since the intersection $\cap_{\mathfrak{P}\in \mathcal{P}}\intR(E,D)_{\mathfrak{P}}$ is locally finite.
\end{proof}
\begin{remark}
The converse of the previous proposition is not true in general. For instance, $\mathbb{Z}$ has finite character but $\intR(\mathbb{Z})$ does not have a locally finite intersection of a family of its localizations. Otherwise, $\intR(\mathbb{Z})=\mathrm{Int}(\mathbb{Z})$ is a Krull-type domain because any Pr\"ufer domain with locally finite intersection is Krull-type, and then it follows from \cite[Theorem 2.30]{TT23} that $\intR(\mathbb{Z})=\mathrm{Int}(\mathbb{Z})=\mathbb{Z}[X],$ which is not the case.
\end{remark}
We next  show that the property of being a P$v$MD transfers from $\intR(E,D)$ to $D,$ but not conversely.
\begin{proposition}\label{pvmd}
If $\intR(E,D)$ is a P$v$MD then so is $D$.
\end{proposition}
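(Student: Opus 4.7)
The plan is to reduce the P$v$MD property for $D$ to the P$v$MD property for $\intR(E,D)$ by matching $t$-maximal ideals via contraction and then exploiting the fact that a valuation domain restricted to a subfield of its quotient field is again a valuation domain.

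First I would take an arbitrary $t$-maximal ideal $\mathfrak{m}$ of $D$ and lift it to $\intR(E,D)$. By Corollary \ref{Str}, there exists a $t$-maximal ideal $\mathfrak{M}$ of $\intR(E,D)$ with $\mathfrak{M} \cap D = \mathfrak{m}$. Since $\intR(E,D)$ is assumed to be a P$v$MD, the localization $\intR(E,D)_{\mathfrak{M}}$ is then a valuation domain of $K(X)$, the quotient field of $\intR(E,D)$.

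Next I would intersect with $K$. By Lemma \ref{Lolem} applied to the prime $\mathfrak{M}$, we have
\[
D_{\mathfrak{m}} \;=\; D_{\mathfrak{M}\cap D} \;=\; \intR(E,D)_{\mathfrak{M}} \cap K.
\]
It is a standard fact that the intersection of a valuation domain of a field $L$ with a subfield of $L$ is a valuation domain of that subfield; applying this to $L = K(X)$ and the subfield $K$ yields that $D_{\mathfrak{m}}$ is a valuation domain. Since $\mathfrak{m}$ was an arbitrary $t$-maximal ideal of $D$, this shows $D$ is a P$v$MD.

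The only non-routine ingredient here is ensuring that every $t$-maximal ideal of $D$ really can be realized as the contraction of a $t$-maximal ideal of $\intR(E,D)$, which is exactly what Corollary \ref{Str} provides (and which itself rests on Proposition \ref{tcont}); without this lifting step one could not guarantee that the localization we obtain in $\intR(E,D)$ corresponds to the chosen $t$-maximal ideal of $D$. Everything else is a direct application of the intersection formula and the preservation of the valuation property under restriction to a subfield, so no further delicate computation should be required.
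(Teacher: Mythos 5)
Your proof is correct and follows essentially the same route as the paper: lift a $t$-maximal ideal $\mathfrak{m}$ of $D$ to a $t$-maximal ideal $\mathfrak{M}$ of $\intR(E,D)$ via Corollary \ref{Str}, then use Lemma \ref{Lolem} to identify $D_{\mathfrak{m}}$ with $\intR(E,D)_{\mathfrak{M}}\cap K$, which is a valuation domain. No gaps to report.
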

\begin{proof}
Assume that $\intR(E,D)$ is a P$v$MD and let $\mathfrak{m}$ be a $t$-maximal ideal of $D$.  From Corollary \ref{Str}, we deduce that there exists a $t$-maximal ideal $\mathfrak{M}$ of $\intR(E,D)$ such that $\mathfrak{M} \cap D = \mathfrak{m}$. Thus, $\mathfrak{m}\intR(E,D) \subseteq \mathfrak{M}$. Since $\intR(E,D)$ is a P$v$MD, $\intR(E,D)_{\mathfrak{M}}$ is a valuation domain and then so is $ D_\mathfrak{m}$  because $D_\mathfrak{m}=\intR(E,D)_{\mathfrak{M}} \cap K$ by Lemma \ref{Lolem}, and therefore $D$ is a P$v$MD.
\end{proof}
\begin{remark}
It is worth noting that the necessary condition stated in Proposition \ref{pvmd} is not sufficient in general. As a matter of fact, in \cite[Remark 2.4.4]{BL23}, the author showed that if $V$ is a valuation domain (and hence it is is a P$v$MD)  with non-principal maximal ideal and algebraically closed residue field then $\intR(V)$ is an essential domain that is not a P$v$MD. Consequently, the characterization of when $\intR(E,D)$, or at least $\intR(D)$, is a P$v$MD remains an open problem.
\end{remark}

We recall that  \textit{Krull-type domains} are integral domains $D$ for which $D=\cap_{\mathfrak{p}\in\mathcal{P}}D_\mathfrak{p},$ where $\mathcal{P}\subseteq\mathrm{Spec}(D),$ $D_\mathfrak{p}$ is a valuation domain for each $\mathfrak{p}\in\mathcal{P}$ and the intersection is locally finite. It is well-known that P$v$MDs of $t$-finite character coincide with Krull-type domains. From this last fact, it follows from Propositions \ref{tFin}  and \ref{pvmd} the following:
\begin{corollary}\label{KrTy}
If $\intR(E,D)$ is a Krull-type domain then so is $D$.
\end{corollary}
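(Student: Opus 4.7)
The plan is to exploit the well-known characterization recalled immediately before the corollary: an integral domain is of Krull-type precisely when it is a P$v$MD of $t$-finite character. Once we have this equivalence in hand, the corollary collapses to a direct invocation of two results already proved in the excerpt, so no new arguments about intersections of localizations are needed at this stage.

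First, I would assume that $\intR(E,D)$ is a Krull-type domain. By the characterization, $\intR(E,D)$ is simultaneously a P$v$MD and of $t$-finite character. Next, I would apply Proposition \ref{pvmd} to transfer the P$v$MD property downward, concluding that $D$ itself is a P$v$MD. Then I would apply Proposition \ref{tFin} to transfer the $t$-finite character property downward, concluding that $D$ is of $t$-finite character. Finally, combining these two conclusions and appealing once more to the equivalence between Krull-type and P$v$MD-of-$t$-finite-character yields that $D$ is a Krull-type domain, which is what we wanted.

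Since the whole argument is a two-line chaining of cited results, there is essentially no obstacle beyond invoking the correct names: the serious work has already been done in Propositions \ref{tFin} and \ref{pvmd}. The only point that deserves a brief mention is to make clear which characterization of Krull-type domains is being used (the P$v$MD $+$ $t$-finite character one), so that the reader sees immediately that the hypothesis of the corollary unpacks into exactly the two hypotheses needed to feed into the two earlier propositions.
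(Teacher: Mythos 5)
Your proposal is correct and matches the paper's own argument: the corollary is derived exactly by unpacking the Krull-type property as P$v$MD plus $t$-finite character and invoking Propositions \ref{pvmd} and \ref{tFin}. Nothing further is needed.
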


We now prove that $\intR(E,D)$ being a $t$-almost Dedekind domain forces $D$ to also be a $t$-almost Dedekind domain.
\begin{proposition}\label{t-AD}
If $\intR(E,D)$ is a $t$-almost Dedekind domain then so is $D$.
\end{proposition}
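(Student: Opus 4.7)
The plan is to mimic the argument already used for P$v$MDs (Proposition \ref{pvmd}), but then to upgrade the valuation domain $D_\mathfrak{m}$ to a DVR by exploiting the rank-one structure inherited from $\intR(E,D)$. Since every $t$-almost Dedekind domain is a P$v$MD, Proposition \ref{pvmd} already gives that $D$ is a P$v$MD, so for each $t$-maximal ideal $\mathfrak{m}$ of $D$ the localization $D_\mathfrak{m}$ is a valuation domain. The task therefore reduces to showing that $D_\mathfrak{m}$ has rank one.

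First I would fix a $t$-maximal ideal $\mathfrak{m}$ of $D$ (the case where $D$ is a field is vacuous) and apply Corollary \ref{Str} to obtain a $t$-maximal ideal $\mathfrak{M}$ of $\intR(E,D)$ with $\mathfrak{M}\cap D=\mathfrak{m}$. By the $t$-almost Dedekind hypothesis, $\intR(E,D)_\mathfrak{M}$ is a DVR. Using Lemma \ref{Lolem}, we get the identification
\[
D_\mathfrak{m}=\intR(E,D)_\mathfrak{M}\cap K.
\]

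Now let $v$ denote the (normalized) rank-one discrete valuation of $\intR(E,D)_\mathfrak{M}$ on its quotient field $F$, with value group $v(F^{*})=\mathbb{Z}$. Restricting $v$ to the subfield $K\subseteq F$ yields a valuation $v|_{K^{*}}$ on $K$ whose valuation ring is exactly $\intR(E,D)_\mathfrak{M}\cap K=D_\mathfrak{m}$. The value group $v(K^{*})$ is a subgroup of $\mathbb{Z}$, hence is either $\{0\}$ or isomorphic to $\mathbb{Z}$. Since $\mathfrak{m}$ is a nonzero ideal of $D$, any $0\neq x\in\mathfrak{m}$ satisfies $v(x)>0$, so $v(K^{*})$ is nontrivial and therefore $v(K^{*})\cong\mathbb{Z}$. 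This shows that $D_\mathfrak{m}$ is a DVR, completing the proof.

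I do not expect any step to be a real obstacle: Corollary \ref{Str} handles the lifting of $t$-maximal ideals, Lemma \ref{Lolem} handles the contraction of localizations, and the remainder is the standard fact that a valuation obtained by restricting a rank-one valuation to a subfield is again of rank at most one. The only point requiring a moment of care is to notice that the restricted valuation is nontrivial, which is guaranteed by the nonzero character of the $t$-maximal ideal $\mathfrak{m}$.
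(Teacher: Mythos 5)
Your proof is correct and follows essentially the same route as the paper: lift the $t$-maximal ideal $\mathfrak{m}$ via Corollary \ref{Str}, identify $D_\mathfrak{m}=\intR(E,D)_{\mathfrak{M}}\cap K$ via Lemma \ref{Lolem}, and conclude that $D_\mathfrak{m}$ is a DVR. Your restriction-of-valuation argument merely spells out (with the useful nontriviality check coming from $\mathfrak{m}\neq(0)$) the step the paper compresses into ``a DVR as an intersection of two DVRs,'' and the preliminary appeal to Proposition \ref{pvmd} is harmless but not needed.
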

\begin{proof}
Assume that $\intR(E,D)$ is a $t$-almost Dedekind domain and let $\mathfrak{m}$ be a $t$-maximal ideal of $D$.  From Corollary \ref{Str}, we deduce that there exists a $t$-maximal ideal $\mathfrak{M}$ of $\intR(E,D)$ such that $\mathfrak{M} \cap D = \mathfrak{m}$. Then, as in the proof of Proposition \ref{pvmd}, $D_\mathfrak{m}=\intR(E,D)_{\mathfrak{M}} \cap K$ is a DVR as an intersection of two DVRs, and hence $D$ is $t$-almost Dedekind.
\end{proof}

\begin{proposition}\label{t-Dim1}
If $t$-$\dim(\intR(E,D))=1,$ then $D$ is either a field or of $t$-dimension 1.
\end{proposition}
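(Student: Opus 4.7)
The plan is to prove the contrapositive: assume $t$-$\dim(\intR(E,D)) = 1$ and that $D$ is not a field, and show every $t$-maximal $\mathfrak{p}$ of $D$ has height $1$. By Corollary \ref{Str} there is a $t$-maximal $\mathfrak{M}$ of $\intR(E,D)$ above $\mathfrak{p}$, and the hypothesis forces $\mathrm{ht}(\mathfrak{M}) = 1$; hence $\intR(E,D)_\mathfrak{M}$ is a one-dimensional quasi-local domain dominating $D_\mathfrak{p} = \intR(E,D)_\mathfrak{M} \cap K$ (by Lemma \ref{Lolem}), with maximal ideal contracting to $\mathfrak{p} D_\mathfrak{p}$. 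Suppose for contradiction that there is a nonzero prime $\mathfrak{q}$ of $D$ with $\mathfrak{q} \subsetneq \mathfrak{p}$.

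The key idea is to arrange that $\mathfrak{M}$ contains the ideal $\intR(E,\mathfrak{p}) := \{f \in \intR(E,D) :\, f(E) \subseteq \mathfrak{p}\}$. By Proposition \ref{tvId}(2) this ideal coincides with $(\mathfrak{p}\intR(E,D))_v$, so it is a proper $v$-ideal (as it does not contain $1$), hence a proper $t$-ideal, and therefore contained in some $t$-maximal ideal of $\intR(E,D)$; Proposition \ref{tcont} and the $t$-maximality of $\mathfrak{p}$ force its contraction to be precisely $\mathfrak{p}$, so we may take this as our $\mathfrak{M}$. In particular, $\intR(E,\mathfrak{q}) \subseteq \intR(E,\mathfrak{p}) \subseteq \mathfrak{M}$.

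I then compute $\mathfrak{q}\intR(E,D)_\mathfrak{M} \cap D_\mathfrak{p}$: for $d/t$ in this set (with $d \in D$, $t \in D \setminus \mathfrak{p}$), some $s \in \intR(E,D) \setminus \mathfrak{M}$ satisfies $sd \in \mathfrak{q}\intR(E,D)$, hence $s(e)d \in \mathfrak{q}$ for every $e \in E$. Since $s \notin \mathfrak{M} \supseteq \intR(E,\mathfrak{q})$, some $e_0 \in E$ satisfies $s(e_0) \notin \mathfrak{q}$, and the primality of $\mathfrak{q}$ gives $d \in \mathfrak{q}$, i.e.\ $d/t \in \mathfrak{q} D_\mathfrak{p}$. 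Hence $\mathfrak{q}\intR(E,D)_\mathfrak{M} \cap D_\mathfrak{p} = \mathfrak{q} D_\mathfrak{p}$. Since the nonzero proper ideal $\mathfrak{q}\intR(E,D)_\mathfrak{M}$ has radical equal to the maximal ideal of the one-dimensional local ring $\intR(E,D)_\mathfrak{M}$, and taking radicals commutes with contraction,
\[
\mathfrak{p} D_\mathfrak{p} \;=\; \mathfrak{M}\intR(E,D)_\mathfrak{M} \cap D_\mathfrak{p} \;=\; \mathrm{rad}\!\big(\mathfrak{q}\intR(E,D)_\mathfrak{M}\big) \cap D_\mathfrak{p} \;=\; \mathrm{rad}\!\big(\mathfrak{q} D_\mathfrak{p}\big) \;=\; \mathfrak{q} D_\mathfrak{p},
\]
forcing $\mathfrak{p} = \mathfrak{q}$, a contradiction.

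The main obstacle is the second paragraph: arranging $\mathfrak{M}$ to swallow the large ideal $\intR(E,\mathfrak{p})$ rather than merely the extension $\mathfrak{p}\intR(E,D)$. Once that is secured, the rest is the clean evaluation trick ``$s \notin \mathfrak{M} \Rightarrow s \notin \intR(E,\mathfrak{q}) \Rightarrow s(e_0) \notin \mathfrak{q}$'', which lets the primality of $\mathfrak{q}$ translate the one-dimensionality upstairs in $\intR(E,D)_\mathfrak{M}$ into the one-dimensionality downstairs in $D_\mathfrak{p}$.
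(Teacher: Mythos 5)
Your strategy (localize at a height-one $t$-maximal ideal $\mathfrak{M}$ lying over $\mathfrak{p}$ and push one-dimensionality down to $D_\mathfrak{p}$ via the evaluation trick) is quite different from the paper's argument, but the step you yourself call the main obstacle is genuinely unproved. You claim $\intR(E,\mathfrak{p})=(\mathfrak{p}\intR(E,D))_v$ ``by Proposition \ref{tvId}(2)''; what that proposition actually gives is $(\mathfrak{p}\intR(E,D))_v=\intR(E,\mathfrak{p}_v)$, which is $\intR(E,\mathfrak{p})$ only when $\mathfrak{p}$ is divisorial. A $t$-maximal ideal satisfies $\mathfrak{p}_v=\mathfrak{p}$ or $\mathfrak{p}_v=D$ (since $\mathfrak{p}_v$, if proper, is an integral $t$-ideal containing $\mathfrak{p}$), and the second case really occurs: for a valuation domain $V$ with value group $\mathbb{Q}\oplus\mathbb{Q}$ ordered lexicographically, the maximal ideal $\mathfrak{m}$ is $t$-maximal of height $2$ with $\mathfrak{m}^{-1}=V$, hence $\mathfrak{m}_v=V$. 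Note this is exactly the configuration your contradiction argument must handle (a nonzero prime strictly below a $t$-maximal ideal), so the bad case cannot be waved away. When $\mathfrak{p}_v=D$ the asserted identity is simply false, since $(\mathfrak{p}\intR(E,D))_v=\intR(E,D)$ while $\intR(E,\mathfrak{p})$ is proper; worse, $\left(\intR(E,\mathfrak{p})\right)_v=\intR(E,D)$, so $\intR(E,\mathfrak{p})$ lies in no proper divisorial ideal, and whether it (or $\intR(E,\mathfrak{q})$) lies in a proper $t$-ideal contracting to $\mathfrak{p}$ is precisely what would have to be shown; no argument is given. Corollary \ref{Str} only places the much smaller ideal $\mathfrak{p}\intR(E,D)$ inside a $t$-maximal ideal over $\mathfrak{p}$, and that does not suffice for your computation, which needs $s\notin\mathfrak{M}\Rightarrow s(e_0)\notin\mathfrak{q}$ for some $e_0\in E$, i.e.\ $\intR(E,\mathfrak{q})\subseteq\mathfrak{M}$.

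Granted that containment, the rest is correct: the identity $\mathfrak{q}\intR(E,D)_\mathfrak{M}\cap D_\mathfrak{p}=\mathfrak{q}D_\mathfrak{p}$, the radical computation in the one-dimensional quasi-local ring $\intR(E,D)_\mathfrak{M}$, the contraction $\mathfrak{M}\intR(E,D)_\mathfrak{M}\cap D_\mathfrak{p}=\mathfrak{p}D_\mathfrak{p}$, and the reduction of $t$-$\dim(D)=1$ to the height of $t$-maximal ideals are all fine (though what you prove is the statement itself, not its contrapositive). The paper's proof never localizes; it works directly with Corollary \ref{Str} and $t$-primes upstairs. So your route would be a genuinely different proof if the key containment $\intR(E,\mathfrak{p})\subseteq\mathfrak{M}$ for some $t$-maximal $\mathfrak{M}$ over $\mathfrak{p}$ were established for non-divisorial $t$-maximal ideals as well; as written, it has a gap at its pivotal point.
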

\begin{proof}
Suppose that $t$-$\dim(\intR(E,D))=1$ and $D$ is not a field, and let $\mathfrak{m}$ be a $t$-maximal ideal of $D$. Then there is $t$-prime ideal $\mathfrak{p}$ of $D$ contained in $\mathfrak{m}$, and from the observation made in the beginning of the proof of Corollary \ref{Str} we deduce that there is a $t$-prime ideal $\mathfrak{P}$ of $\intR(E,D)$ containing $\mathfrak{p}$. Also, by Corollary  \ref{Str}, $\mathfrak{m}$ is the contraction of a $t$-maximal ideal $\mathfrak{M}$ of  $\intR(E,D)$. Thus, from the inclusion $(0)\subset\mathfrak{P}\cap \mathfrak{M}\subseteq \mathfrak{M}$ and using the fact that  $\mathfrak{P}\cap \mathfrak{M}$ is a $t$-prime ideal of  $\intR(E,D)$ and $t$-$\dim(\intR(E,D)=1$, we have $\mathfrak{P}\cap \mathfrak{M}=\mathfrak{M}$. Therefore, $\mathfrak{M}\subseteq \mathfrak{P}$, which implies that $\mathfrak{M}=\mathfrak{P}$, and thus $\mathfrak{m}=\mathfrak{p}$, that is, $t$-$\dim(D)=1$. 
\end{proof}
\begin{remark}
We note that the converse of the last two propositions is not true in general. Indeed, the integral domain $\mathbb{Z}$ is $t$-almost Dedekind (and then it has $t$-dimension one). However, the $t$-dimension of $\intR(\mathbb{Z})$ is equal to two because $\intR(\mathbb{Z})=\mathrm{Int}(\mathbb{Z})$ and $\mathrm{Int}(\mathbb{Z})$ is known to be a two-dimensional Pr\"ufer domain, and also $\intR(\mathbb{Z})$ is not $t$-almost Dedekind.
\end{remark}

An integral domain $D$ is \textit{weakly-Krull} if $D=\cap_{\mathfrak{p}\in X^1(D)}D_\mathfrak{p}$ and  this intersection is locally finite.  Clearly,  Krull domains are weakly-Krull. An integral domain $D$ is said to be a \textit{generalized Krull domain} (in the sense of Gilmer \cite[Section 43]{Gi92}), if $D=\cap_{\mathfrak{p}\in X^1(D)} D_\mathfrak{p}$, where the intersection is locally finite and each $D_\mathfrak{p}$ is a valuation domain. For instance, Krull domains are generalized Krull, and generalized Krull domains are a subclass of both Krull-type domains and weakly-Krull domains.

\begin{corollary}
If $\intR(E,D)$ is a weakly-Krull $($resp., generalized Krull$)$ domain then so is $D$.
\end{corollary}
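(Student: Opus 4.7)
The plan is to reduce both statements to the conjunction of three properties already shown to transfer from $\intR(E,D)$ to $D$: $t$-dimension one, $t$-finite character, and (for the generalized Krull case) the P$v$MD property. The key conceptual input is the standard equivalence:
\[
D \text{ is weakly-Krull} \iff t\text{-}\dim(D)=1 \text{ and } D \text{ has } t\text{-finite character},
\]
valid whenever $D$ is not a field (the field case being trivial). This holds because for $t$-$\dim=1$ domains one has $X^1(D)=t$-$\mathrm{Max}(D)$, locally finite intersection over height-one primes is exactly $t$-finite character, and the identity $D=\cap_{\mathfrak{m}\in t\text{-}\mathrm{Max}(D)}D_\mathfrak{m}$ is a general fact about the $w$-operation.

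With this in hand, the first step is to assume $\intR(E,D)$ is weakly-Krull and read off that $t$-$\dim(\intR(E,D))=1$ and $\intR(E,D)$ has $t$-finite character. Proposition \ref{t-Dim1} then forces $D$ to be a field or satisfy $t$-$\dim(D)=1$; assuming the latter, Proposition \ref{tFin} gives $t$-finite character for $D$. Combining these with the standard representation $D=\cap_{\mathfrak{m}\in t\text{-}\mathrm{Max}(D)}D_\mathfrak{m}=\cap_{\mathfrak{p}\in X^1(D)}D_\mathfrak{p}$ (locally finite thanks to $t$-finite character), we conclude that $D$ is weakly-Krull.

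For the generalized Krull case, observe that a generalized Krull domain is exactly a weakly-Krull P$v$MD (since generalized Krull requires additionally that each $D_\mathfrak{p}$, $\mathfrak{p}\in X^1(D)$, be a valuation domain, and under $t$-$\dim(D)=1$ this coincides with $D_\mathfrak{m}$ being a valuation domain for every $t$-maximal $\mathfrak{m}$). So once the weakly-Krull step is done, Proposition \ref{pvmd} upgrades $D$ to a P$v$MD, and hence to a generalized Krull domain.

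I expect the only mildly delicate point to be the justification of the equivalence between weakly-Krullness and the pair ($t$-$\dim=1$, $t$-finite character); everything else is a direct invocation of Propositions \ref{t-Dim1}, \ref{tFin}, and \ref{pvmd}. An alternative route would use Proposition \ref{f} to handle the locally finite intersection part directly, but the $t$-theoretic characterization keeps the argument uniform for both halves of the corollary.
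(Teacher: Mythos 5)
Your proof is correct and follows essentially the same route as the paper: the paper also derives the weakly-Krull transfer from the characterization of weakly-Krull domains as the $t$-dimension-one domains with $t$-finite character together with Propositions \ref{tFin} and \ref{t-Dim1}. For the generalized Krull half the paper packages the same ingredients slightly differently (generalized Krull $=$ Krull-type of $t$-dimension one, via Corollary \ref{KrTy} and Proposition \ref{t-Dim1}), whereas you use generalized Krull $=$ weakly-Krull P$v$MD and invoke Proposition \ref{pvmd} directly; both reduce to the same three transfer results, so the arguments coincide in substance.
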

\begin{proof}
By combining the fact that weakly-Krull domains are exactly the one $t$-dimensional domains with $t$-finite character,  Propositions \ref{tFin} and \ref{t-Dim1}, we deduce the transfer of the weakly-Krull property from $\intR(E,D)$ to $D$. The generalized Krull property follows from the fact that a generalized Krull domain is exactly a Krull-type domain of $t$-dimension one, which can be obtained by using  Corollary \ref{KrTy} and Proposition \ref{t-Dim1}.
\end{proof}

Given a nonempty subset $\mathcal{P}$ of $\mathrm{Spec}(D)$, we say that $D$ is an \textit{essential domain} with defining family $\mathcal{P}$ if $D=\cap_{\mathfrak{p}\in\mathcal{P}}D_\mathfrak{p}$ and $D_\mathfrak{p}$ is a valuation domain for each $\mathfrak{p}\in\mathcal{P}$. It is clear that almost Krull domains and P$v$MDs are essential, and essential domains are integrally closed.

We next show that the essentiality transfers from $\intR(E,D)$ to $D$.
\begin{proposition}\label{essential}
If $\intR(E,D)$ is an essential domain then so is $D$.
\end{proposition}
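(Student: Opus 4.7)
The plan is to mimic the strategy used in Propositions \ref{f} and \ref{pvmd}, transferring the defining family of prime ideals from $\intR(E,D)$ down to $D$ via contraction, and then invoking Lemma \ref{Lolem} to recover valuation localizations and the correct intersection identity.

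First I would assume that $\intR(E,D)$ is essential with some defining family $\mathcal{Q}\subseteq\mathrm{Spec}(\intR(E,D))$, so that
\[
\intR(E,D)=\bigcap_{\mathfrak{P}\in\mathcal{Q}}\intR(E,D)_{\mathfrak{P}},
\]
with each $\intR(E,D)_{\mathfrak{P}}$ a valuation domain (in its own quotient field, which sits inside $K(X)$ and contains $K$). Then I would set $\mathcal{P}:=\{\mathfrak{P}\cap D\;;\;\mathfrak{P}\in\mathcal{Q}\}\subseteq\mathrm{Spec}(D)$ as the candidate defining family for $D$.

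Next I would check that for every $\mathfrak{p}=\mathfrak{P}\cap D\in\mathcal{P}$ the localization $D_{\mathfrak{p}}$ is a valuation domain. By Lemma \ref{Lolem}, $D_{\mathfrak{p}}=\intR(E,D)_{\mathfrak{P}}\cap K$. Given any nonzero $\alpha\in K$, the valuation property of $\intR(E,D)_{\mathfrak{P}}$ applied in its quotient field (which contains $K$) forces $\alpha\in\intR(E,D)_{\mathfrak{P}}$ or $\alpha^{-1}\in\intR(E,D)_{\mathfrak{P}}$; intersecting with $K$ yields the same dichotomy inside $D_{\mathfrak{p}}$, so $D_{\mathfrak{p}}$ is a valuation overring of $D$ in $K$.

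Finally, I would establish the intersection identity $D=\bigcap_{\mathfrak{p}\in\mathcal{P}}D_{\mathfrak{p}}$. The inclusion $\subseteq$ is trivial, and for the reverse one the computation
\[
\bigcap_{\mathfrak{p}\in\mathcal{P}}D_{\mathfrak{p}}=\bigcap_{\mathfrak{P}\in\mathcal{Q}}\bigl(\intR(E,D)_{\mathfrak{P}}\cap K\bigr)=\Bigl(\bigcap_{\mathfrak{P}\in\mathcal{Q}}\intR(E,D)_{\mathfrak{P}}\Bigr)\cap K=\intR(E,D)\cap K=D
\]
closes the argument, where the first equality uses Lemma \ref{Lolem} again and the last uses that $D=\intR(E,D)\cap K$. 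I do not expect any real obstacle here: the only subtle point is making sure that the valuation property in the (possibly proper) quotient subfield of $\intR(E,D)$ still restricts correctly to $K$, which is immediate since $K$ is contained in that subfield.
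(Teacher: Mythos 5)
Your proposal is correct and follows essentially the same route as the paper's proof: contract the defining family to $D$, use Lemma \ref{Lolem} to see that each $D_{\mathfrak{P}\cap D}=\intR(E,D)_{\mathfrak{P}}\cap K$ is a valuation domain, and recover $D=\bigcap_{\mathfrak{p}}D_{\mathfrak{p}}$ exactly as in Proposition \ref{f}. Your explicit check of the valuation dichotomy after intersecting with $K$ merely spells out what the paper leaves implicit.
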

\begin{proof}
Suppose that $\intR(E,D) = \cap_{\mathfrak{P}\in \mathcal{P}}\intR(E,D)_{\mathfrak{P}}$ is an essential domain with defining family $\mathcal{P}$ and setting  $\mathcal{P}^{\prime} := \lbrace \mathfrak{P} \cap D; \mathfrak{P} \in \mathcal{P}\rbrace$. For any $\mathfrak{P}\in\mathcal{P},$ we have $\intR(E,D)_{\mathfrak{P}} $ is a valuation domain, and hence, by Lemma \ref{Lolem}, $D_{\mathfrak{P}\cap D}=\intR(E,D)_{\mathfrak{P}} \cap K$ is also a valuation domain. Moreover, as in the proof of Proposition \ref{f}, we have  $D = \cap_{\mathfrak{p}\in \mathcal{P}^{\prime}} D_{\mathfrak{p}},$ and so $D$ is an essential domain with defining family $\mathcal{P}^{\prime}.$
\end{proof}

The following result gives a sufficient condition on valuation domains $V$ for which $\intR(E,V)$ is an essential domain.
\begin{proposition}\label{PrEssential}
Let $(V,\mathfrak{m})$ be a valuation domain. If the value group of $V$ is not divisible, then $\intR(E,V)$ is an essential domain.  
\end{proposition}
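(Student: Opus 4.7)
The plan is to apply Corollary~\ref{reprLoc}(3) to reduce the problem to a local statement. Since $V$ is local with maximal ideal $\mathfrak{m}$, we have the representation
\[
\intR(E,V)\;=\;\bigcap_{a\in E}\intR(E,V)_{\mathfrak{M}_{\mathfrak{m},a}},
\]
so it suffices to show that each pointed localization $\intR(E,V)_{\mathfrak{M}_{\mathfrak{m},a}}$ is a valuation overring of $V$ inside $K(X)$; this will furnish an essential presentation of $\intR(E,V)$ with defining family $\{\mathfrak{M}_{\mathfrak{m},a}\mid a\in E\}$.

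Fix $a\in E$. To exhibit the required valuation, I would construct a Gauss-type extension $w_{a}$ of the valuation $v$ of $V$ to $K(X)$ centered at $a$. Concretely, enlarge the value group $\Gamma$ to an ordered group $\Gamma'=\Gamma\oplus\mathbb{Z}\delta$ by adjoining a positive element $\delta$ that is infinitesimal with respect to $\Gamma$, and set
\[
w_{a}\!\left(\sum_{i}c_{i}(X-a)^{i}\right)\;=\;\min_{i}\bigl(v(c_{i})+i\delta\bigr),
\]
extended multiplicatively to $K(X)$. One then checks that $w_{a}$ is a well-defined valuation, that its valuation ring $W_{a}$ contains $\intR(E,V)$, and that every $\sigma\in\intR(E,V)\setminus\mathfrak{M}_{\mathfrak{m},a}$ satisfies $w_{a}(\sigma)=0$ because $\sigma(a)\in V\setminus\mathfrak{m}$. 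These observations yield the inclusion $\intR(E,V)_{\mathfrak{M}_{\mathfrak{m},a}}\subseteq W_{a}$.

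The main obstacle, and the one place where the hypothesis that $\Gamma$ is not divisible intervenes, is the reverse inclusion $W_{a}\subseteq\intR(E,V)_{\mathfrak{M}_{\mathfrak{m},a}}$. For this one must show that any $\psi\in K(X)$ with $w_{a}(\psi)\ge 0$ admits a representation $\psi=\varphi/\sigma$ with $\varphi\in\intR(E,V)$ and $\sigma\in\intR(E,V)\setminus\mathfrak{M}_{\mathfrak{m},a}$. Non-divisibility of $\Gamma$ supplies a $\gamma\in\Gamma$ and an integer $n\ge 2$ with $\gamma/n\notin\Gamma$, and polynomials of the form $(X-a)^{n}-t$ with $v(t)=\gamma$ then admit no root of value $\gamma/n$ in $V$; iterated use of such ``rigid'' denominators produces, for any prescribed $\psi$, a polynomial $\sigma\in\intR(E,V)\setminus\mathfrak{M}_{\mathfrak{m},a}$ that clears the denominator of $\psi$ while remaining a unit at $a$. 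Once $W_{a}=\intR(E,V)_{\mathfrak{M}_{\mathfrak{m},a}}$ is established, the desired essentiality of $\intR(E,V)$ follows directly from the opening reduction, with defining family $\{\mathfrak{M}_{\mathfrak{m},a}\mid a\in E\}$.
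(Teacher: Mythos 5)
Your opening reduction is exactly the paper's: by Corollary~\ref{reprLoc}(3), $\intR(E,V)=\bigcap_{a\in E}\intR(E,V)_{\mathfrak{M}_{\mathfrak{m},a}}$, so essentiality follows once each pointed localization is a valuation domain. At that point the paper simply invokes \cite[Proposition 2.35]{BL22}, which states that non-divisibility of the value group forces each $\intR(E,V)_{\mathfrak{M}_{\mathfrak{m},a}}$ to be a valuation domain. You instead try to prove this fact directly, and that is where your argument breaks down.

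The concrete failure is your claim that the valuation ring $W_a$ of the Gauss-type valuation $w_a$ (with $\delta$ positive and infinitesimal relative to $\Gamma$) contains $\intR(E,V)$. Take $V=\mathbb{Z}_{(p)}$ (value group $\mathbb{Z}$, not divisible), $E=V$, $a=0$, and $\varphi(X)=\frac{X^p-X}{p}\in\mathrm{Int}(V)\subseteq\intR(E,V)$. Then $w_0(X^p-X)=\min(\delta,\,p\delta)=\delta$ while $w_0(p)=v(p)>0$, so $w_0(\varphi)=\delta-v(p)<0$ because $\delta$ is infinitesimal; hence $\varphi\notin W_0$ and $\intR(E,V)\not\subseteq W_a$. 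The phenomenon is general: elements of $\intR(E,V)$ typically carry denominators of strictly positive value in $\Gamma$, which an infinitesimal weight on $(X-a)$ can never offset, so $W_a$ is not the valuation ring you need (the valuation rings that do arise over $\mathfrak{M}_{\mathfrak{m},a}$ have value groups tied to $\Gamma$, not to $\Gamma\oplus\mathbb{Z}\delta$). Moreover, the only place you let the non-divisibility hypothesis act --- the ``iterated rigid denominators'' argument for the reverse inclusion --- is a sketch with no actual proof; that step is precisely the nontrivial content of \cite[Proposition 2.35]{BL22} and cannot be waved through. So the proposal has a genuine gap: the reduction is fine, but the local statement it relies on is neither correctly reproved nor correctly attributed, and the specific valuation you construct does not even dominate $\intR(E,V)$.
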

\begin{proof}
Assume that the value group of $V$ is not divisible. By Corollary \ref{reprLoc}(3) and \cite[Proposition 2.35]{BL22}, we have $\intR(E,V)=\cap_{a\in E} \intR(E,V)_{\mathfrak{M}_{\mathfrak{m},a}}$ and each $\intR(E,V)_{\mathfrak{M}_{\mathfrak{m},a}}$ is a valuation domain. Therefore, $\intR(E,V)$ is an essential domain. 
\end{proof}
\begin{corollary}\label{CorKT}
Let $V$ be a valuation domain whose value group is not divisible. If $E$ is finite, then $\intR(E,V)$ is a Krull-type domain, and hence it is a P$v$MD.  
\end{corollary}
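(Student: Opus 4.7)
The plan is to read off the corollary directly from the proof of Proposition \ref{PrEssential}, exploiting the finiteness of $E$ to upgrade ``essential'' to ``Krull-type''. Indeed, Proposition \ref{PrEssential} is established by combining Corollary \ref{reprLoc}(3) with \cite[Proposition 2.35]{BL22} to produce the representation
$$\intR(E,V) \;=\; \bigcap_{a\in E} \intR(E,V)_{\mathfrak{M}_{\mathfrak{m},a}},$$
where each $\mathfrak{M}_{\mathfrak{m},a}$ is a (maximal) prime ideal of $\intR(E,V)$ by Proposition \ref{Notation}, and each localization $\intR(E,V)_{\mathfrak{M}_{\mathfrak{m},a}}$ is a valuation domain.

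First I would quote this representation as is. Next, since $E$ is assumed to be finite, the indexing family $\{\mathfrak{M}_{\mathfrak{m},a} : a\in E\}\subseteq\mathrm{Spec}(\intR(E,V))$ is finite, so the intersection is automatically locally finite: any nonzero element of $\intR(E,V)$ lies in at most $|E|$ of the ideals $\mathfrak{M}_{\mathfrak{m},a}$. Hence $\intR(E,V)$ satisfies the defining conditions of a Krull-type domain (a locally finite intersection of valuation localizations at a subset of $\mathrm{Spec}$), proving the first assertion.

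For the second assertion, I would invoke the fact recalled in the paragraph preceding Corollary \ref{KrTy}: Krull-type domains are exactly the P$v$MDs of $t$-finite character, so in particular every Krull-type domain is a P$v$MD. This yields that $\intR(E,V)$ is a P$v$MD and completes the proof.

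There is essentially no obstacle here; the statement is a packaging result. The only point requiring (minimal) care is to verify that the $\mathfrak{M}_{\mathfrak{m},a}$ really are prime ideals of $\intR(E,V)$ so that they constitute a legitimate defining family $\mathcal{P}\subseteq\mathrm{Spec}(\intR(E,V))$ in the definition of Krull-type, which is granted by Proposition \ref{Notation}(2).
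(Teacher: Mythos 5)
Your proposal is correct and follows essentially the same route as the paper, which simply observes that under these hypotheses $\intR(E,V)$ is a finite intersection of valuation overrings (namely the localizations $\intR(E,V)_{\mathfrak{M}_{\mathfrak{m},a}}$ from the proof of Proposition \ref{PrEssential}), hence Krull-type and therefore a P$v$MD. Your write-up just makes explicit the details (finiteness of the family implies local finiteness, and the Krull-type $=$ P$v$MD with $t$-finite character fact) that the paper leaves implicit.
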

\begin{proof}
Under the given assumptions, the integral domain $\intR(E,V)$  can be written as a finite intersection of some family of its valuation overrings.
\end{proof}
{
Since the ring of integers $\mathbb{Z}$ is not divisible, we have:
\begin{example}
Let $V$ be a valuation domain with value group $\mathbb{Z}$ (for examples of such domains, we can take any DVR or refer to \cite[Examples 6.7.1 and 6.7.3]{HS06}, \cite[Example 2.7]{S90}, or \cite[Examples 8(ii) and 9]{V06}), and let $E$ be a subset of the quotient field of $V$. It follows from Proposition \ref{PrEssential} that $\intR(E,V)$ is an essential domain, and it is Krull-type if $E$ is finite as asserted Corollary \ref{CorKT}.
\end{example}}
We do not know whether the converse of each of the last two propositions is true or not.

\section{The module structure of $\intR(E,D)$ and  $\mathrm{Int}_B(E,D)$ }\label{sec5}
	  
In this section, we will investigate the properties of   $\intR(E,D)$ as a module over $D$ and also as an overring of $D[X]$ when $E$ is a subset of $D$. We first start by stating the following remark:
	 
\begin{remark}\label{123}
We have the following :
\begin{enumerate}[$(1)$]
\item {The ring $\intR(E,D)$ also has the structure of a module over $D$.}
\item The $D$-module $\intR(E,D)$ is torsion-free.
\item $\intR(E,D)\cap K=D.$ 
\end{enumerate}
\end{remark}

Based on the last statement of the previous remark and by virtue of \cite[Remark 3.4]{HOR16}, we derive the following:
\begin{proposition}\label{FFPr}
If $D$ is a Pr\"ufer domain, then $\intR(E,D)$ is faithfully flat as a $D$-module.
\end{proposition}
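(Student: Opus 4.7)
The plan is to split the proof into two stages: first flatness, then faithful flatness. For flatness, I would invoke the classical characterization of Pr\"ufer domains: $D$ is Pr\"ufer if and only if every torsion-free $D$-module is flat. By Remark \ref{123}(2), $\intR(E,D)$ is torsion-free as a $D$-module, and by hypothesis $D$ is Pr\"ufer, so $\intR(E,D)$ is flat over $D$.

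To upgrade flatness to faithful flatness, I would use the standard criterion that a flat $D$-algebra $R$ is faithfully flat over $D$ if and only if $\mathfrak{m} R \neq R$ for every maximal ideal $\mathfrak{m}$ of $D$ (equivalently, the induced map $\mathrm{Spec}(R) \to \mathrm{Spec}(D)$ is surjective). The pointed-ideal machinery developed in Section \ref{sec3} supplies this for free: since $E$ is nonempty by standing assumption, fix any $a \in E$; then for every maximal ideal $\mathfrak{m}$ of $D$, Proposition \ref{Notation} produces a prime ideal $\mathfrak{M}_{\mathfrak{m},a}$ of $\intR(E,D)$ lying over $\mathfrak{m}$. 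In particular $\mathfrak{m}\intR(E,D) \subseteq \mathfrak{M}_{\mathfrak{m},a} \subsetneq \intR(E,D)$, so faithful flatness follows.

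As an alternative, one may cite \cite[Remark 3.4]{HOR16} directly as the authors suggest: that reference packages essentially the above argument into the criterion that a torsion-free $D$-algebra $R$ over a Pr\"ufer domain $D$ satisfying $R \cap K = D$ is faithfully flat over $D$, and the required hypothesis is precisely Remark \ref{123}(3). There is no serious obstacle here; the only subtlety is avoiding confusion with the overring case $D \subseteq R \subseteq K$, where faithful flatness would force $R = D$ --- this is why the weaker condition $\intR(E,D) \cap K = D$, rather than $\intR(E,D) = D$, is the correct input.
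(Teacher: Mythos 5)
Your proof is correct, but it takes a different route from the paper's. The paper gives no self-contained argument at all: it simply observes that $\intR(E,D)\cap K=D$ (Remark \ref{123}(3)) and invokes \cite[Remark 3.4]{HOR16}, which is exactly the ``alternative'' you mention at the end. Your main argument, by contrast, is explicit and elementary: flatness comes from the classical characterization of Pr\"ufer domains (torsion-free $\Rightarrow$ flat) together with Remark \ref{123}(2), and faithful flatness comes from the criterion $\mathfrak{m}\intR(E,D)\neq\intR(E,D)$ for every maximal ideal $\mathfrak{m}$, which the pointed maximal ideals $\mathfrak{M}_{\mathfrak{m},a}$ of Proposition \ref{Notation} (with $a\in E$, $E$ nonempty) supply, since $\mathfrak{m}\intR(E,D)\subseteq\mathfrak{M}_{\mathfrak{m},a}\subset\intR(E,D)$. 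Both steps are sound; note that your second step is essentially the content of the paper's Theorem \ref{l} (flat $\Leftrightarrow$ faithfully flat for $\intR(E,D)$, proved there by the same evaluation-at-a-point argument), so your proof can be phrased as ``Pr\"ufer $+$ torsion-free $\Rightarrow$ flat, then apply Theorem \ref{l}.'' What the paper's citation buys is brevity; what your argument buys is a self-contained proof that makes visible exactly where the hypotheses ($D$ Pr\"ufer, $E\neq\emptyset$, values in $D$) are used, and your closing remark correctly isolates why $\intR(E,D)\cap K=D$ rather than an overring condition is the relevant input.
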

 
As an illustrative example, consider the following:
\begin{example}
The integral domain $D = \mathbb{Z} + T\mathbb{Q}[T]$, where $T$ is an indeterminate over $\mathbb{Q}$, is known to be Pr\"ufer. By Proposition~\ref{FFPr}, $\intR(E,D)$ is a faithfully flat $D$-module for any subset $E$ of $\mathbb{Q}(T)$.
\end{example} 
 
We next provide the $w$-analogue of the previous result. Before doing this, let us recall two necessary concepts.

Following \cite{KW14}, a module $M$ over $D$ is said to be $w$-\textit{flat}  if, for every short exact sequence $0\rightarrow A\rightarrow B\rightarrow C\rightarrow0$ of $D$-modules, $0\rightarrow (M\otimes_DA)_w\rightarrow (M\otimes_DB)_w\rightarrow (M\otimes_DC)_w\rightarrow0$ is also exact. It is noteworthy that flatness implies $w$-flatness.  A module $M$ over $D$ is said to be $w$-\textit{faithfully flat} if it is $w$-flat and $(M/\mathfrak{p}M)_w\neq0$ for all $\mathfrak{p}\in w$-$\mathrm{Max}(D)$. 	 
\begin{proposition}
For any P$v$MD $D$, the $D$-module $\intR(E,D)$ is $w$-faithfully flat.
\end{proposition}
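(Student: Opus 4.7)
The plan is to verify in turn the two conditions defining $w$-faithful flatness, namely $w$-flatness and the non-vanishing $(\intR(E,D)/\mathfrak{p}\intR(E,D))_w\neq 0$ for every $\mathfrak{p}\in w$-$\mathrm{Max}(D)$. Both reductions go through the localization $D_\mathfrak{m}$ at a $t$-maximal ideal $\mathfrak{m}$, where the P$v$MD hypothesis guarantees that $D_\mathfrak{m}$ is a valuation domain; recall also that $w$-$\mathrm{Max}(D)=t$-$\mathrm{Max}(D)$.

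For $w$-flatness, I would appeal to the standard $t$-local criterion from \cite{KW14}: a torsion-free $D$-module $M$ is $w$-flat if and only if $M_\mathfrak{m}$ is a flat $D_\mathfrak{m}$-module for every $\mathfrak{m}\in t$-$\mathrm{Max}(D)$. By Remark~\ref{123}(2), $\intR(E,D)$ is torsion-free over $D$, so $\intR(E,D)_\mathfrak{m}$ is torsion-free over $D_\mathfrak{m}$. Since $D_\mathfrak{m}$ is a valuation domain and every torsion-free module over a valuation domain is flat, $\intR(E,D)_\mathfrak{m}$ is flat over $D_\mathfrak{m}$ for each $\mathfrak{m}\in t$-$\mathrm{Max}(D)$, and hence $\intR(E,D)$ is $w$-flat.

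For the non-vanishing condition, fix $\mathfrak{p}\in w$-$\mathrm{Max}(D)$. By Corollary~\ref{Str}, $\mathfrak{p}$ lifts to a $t$-maximal ideal $\mathfrak{M}$ of $\intR(E,D)$ with $\mathfrak{M}\cap D=\mathfrak{p}$; in particular, $\mathfrak{p}\intR(E,D)\subseteq\mathfrak{M}$, so $\mathfrak{p}\intR(E,D)\cap D=\mathfrak{p}$ and there is an injection $D/\mathfrak{p}\hookrightarrow\intR(E,D)/\mathfrak{p}\intR(E,D)$. Since $\mathfrak{p}$ is $w$-maximal, no GV-ideal $J$ of $D$ can sit inside $\mathfrak{p}$, for otherwise $D=J_w\subseteq\mathfrak{p}_w=\mathfrak{p}$, a contradiction; thus $D/\mathfrak{p}$ is not GV-torsion. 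As a submodule of a GV-torsion module is GV-torsion, $\intR(E,D)/\mathfrak{p}\intR(E,D)$ cannot be GV-torsion either, whence $(\intR(E,D)/\mathfrak{p}\intR(E,D))_w\neq 0$.

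The main obstacle I foresee is invoking the $t$-local characterization of $w$-flatness cleanly and with the correct reference; once that is granted, everything else reduces to ingredients already in hand, namely torsion-freeness from Remark~\ref{123}, the lifting of $t$-maximal ideals from Corollary~\ref{Str}, and the elementary identity $J_w=D$ for $J\in\mathrm{GV}(D)$.
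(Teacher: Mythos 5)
Your proof is correct, but it follows a genuinely different route from the paper. The paper does not verify the two conditions of the definition separately: it invokes \cite[Proposition 2.5]{HK13}, which reduces $w$-faithful flatness of a $D$-module to faithful flatness of its localizations at the $w$-maximal ideals, and then, for each $\mathfrak{m}\in w$-$\mathrm{Max}(D)$, uses the P$v$MD hypothesis to see that $D_\mathfrak{m}$ is a valuation domain and applies \cite[Remark 3.4]{HOR16} (the same ingredient behind Proposition \ref{FFPr}) to the extension $D_\mathfrak{m}\subseteq\intR(E,D)_\mathfrak{m}$, the key point being $\intR(E,D)_\mathfrak{m}\cap K=D_\mathfrak{m}$. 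You instead check the definition directly: $w$-flatness via the local criterion (flatness of $M_\mathfrak{m}$ over $D_\mathfrak{m}$ for all $\mathfrak{m}\in t$-$\mathrm{Max}(D)$) combined with torsion-freeness from Remark \ref{123} and the fact that torsion-free modules over valuation domains are flat; and the non-degeneracy condition via Corollary \ref{Str} and a GV-torsion argument. Both halves of your argument are sound: the local characterization of $w$-flatness you need is indeed available in the Wang--Kim/Kim--Wang $w$-module literature (the paper sidesteps the issue by citing \cite[Proposition 2.5]{HK13}, which packages both the flatness and the faithfulness reductions at once), and your GV-torsion argument does prove $(\intR(E,D)/\mathfrak{p}\intR(E,D))_w\neq 0$, since $M/\mathfrak{p}M$ is killed by $\mathfrak{p}$ and hence being non-GV-torsion is exactly non-vanishing at $\mathfrak{p}$. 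What your approach buys is more transparency and self-containedness (only elementary facts plus results already in the paper), at the cost of an extra external criterion for $w$-flatness; the paper's proof is shorter but leans on two citations. A small simplification available to you: the injection $D/\mathfrak{p}\hookrightarrow\intR(E,D)/\mathfrak{p}\intR(E,D)$ does not actually need Corollary \ref{Str}, since evaluating a finite sum $\sum p_i\varphi_i\in\mathfrak{p}\intR(E,D)$ at any point of $E$ shows directly that $\mathfrak{p}\intR(E,D)\cap D=\mathfrak{p}$; note also that this half of your argument uses nothing about $D$ being a P$v$MD.
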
	 
\begin{proof}
Using \cite[Proposition 2.5]{HK13}), we only need to show that $\intR(E,D)_\mathfrak{m}$ is faithfully flat $D_\mathfrak{m}$-module for all $w$-maximal ideals $\mathfrak{m}$ of $D$. To do this, let $\mathfrak{m}$ be a $w$-maximal ideal of $D$. As $D$ is a P$v$MD, $D_\mathfrak{m}$ is a valuation domain and then it follows from \cite[Remark 3.4]{HOR16} that $\intR(E,D)_\mathfrak{m}$ is a faithfully flat $D_\mathfrak{m}$-module because $D_\mathfrak{m}=(\intR(E,D)\cap K)_\mathfrak{m}=\intR(E,D)_\mathfrak{m}\cap K$. Thus, $\intR(E,D)$ is $w$-faithfully flat $D$-module, as desired.
\end{proof}

This last result and its proof represent a particular case of a result established in \cite{COT23}. For the sake of completeness, we have included the previous proof.

\medskip

Our next theorem shows that the concepts of flatness and faithful flatness coincide for the two $D$-modules $\intR(E,D)$ and   $\mathrm{Int}_B(E,D)$.

\begin{theorem}\label{l}
The $D$-module $\intR(E,D)$ $($resp.,  $\mathrm{Int}_B(E,D))$ is flat if and only if it is faithfully flat.
\end{theorem}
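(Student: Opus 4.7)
The plan is to prove the only nontrivial direction, namely that flatness implies faithful flatness for each of $\intR(E,D)$ and $\mathrm{Int}_B(E,D)$; the converse is part of the definition. I will invoke the standard criterion that a flat $D$-module $M$ is faithfully flat if and only if $\mathfrak{m}M \subsetneq M$ for every maximal ideal $\mathfrak{m}$ of $D$. Thus it suffices to exhibit, for each such $\mathfrak{m}$, a proper ideal of the ring in question that contains the extension of $\mathfrak{m}$.

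The natural candidate is a pointed maximal ideal, which is precisely the technology developed in Section~\ref{sec3}. Fix any $a \in E$ (possible since $E$ is nonempty by standing assumption). For $\intR(E,D)$, Proposition~\ref{Notation}(1) yields the isomorphism $\intR(E,D)/\mathfrak{M}_{\mathfrak{m},a} \simeq D/\mathfrak{m}$, so $\mathfrak{M}_{\mathfrak{m},a}$ is maximal in $\intR(E,D)$, and the inclusion $\mathfrak{m} \subseteq \mathfrak{M}_{\mathfrak{m},a}$ forces $\mathfrak{m}\intR(E,D) \subseteq \mathfrak{M}_{\mathfrak{m},a} \subsetneq \intR(E,D)$, as required. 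For $\mathrm{Int}_B(E,D)$, the same strategy applies after a brief check: the evaluation map $\mathrm{ev}_a \colon \mathrm{Int}_B(E,D) \to D$, $f \mapsto f(a)$, is well-defined (since $f(E) \subseteq D$ for every $f$ in this ring) and surjective (since $D$ sits inside as constants), so its preimage $\mathrm{ev}_a^{-1}(\mathfrak{m})$ is a maximal ideal of $\mathrm{Int}_B(E,D)$ properly containing $\mathfrak{m}\mathrm{Int}_B(E,D)$.

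I do not foresee a serious obstacle: the pointed-ideal construction of Section~\ref{sec3} (and its evident $\mathrm{Int}_B$-analogue) does essentially all the work, and the faithful-flatness criterion invoked is classical. The only step worth spelling out carefully in the final write-up is the $\mathrm{Int}_B(E,D)$-analogue of Proposition~\ref{Notation}, which follows by the same evaluation argument and costs only a line.
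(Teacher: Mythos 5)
Your proposal is correct and is essentially the paper's own argument: both reduce, via the standard criterion that a flat module $M$ is faithfully flat once $\mathfrak{m}M\neq M$ for every maximal ideal $\mathfrak{m}$ of $D$, to showing the extension of $\mathfrak{m}$ is proper, and both verify this by evaluating at a point $a\in E$. The paper unfolds the evaluation as an explicit contradiction $1=m_1\varphi_1(a)+\dots+m_n\varphi_n(a)\in\mathfrak{m}$, while you package the very same mechanism through the pointed maximal ideal $\mathfrak{M}_{\mathfrak{m},a}$ of Proposition~\ref{Notation} (and its evident evaluation analogue for $\mathrm{Int}_B(E,D)$), which is only a cosmetic difference.
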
	 
	 
\begin{proof}
Assume that $\intR(E,D)$ is flat. To prove that $\intR(E,D)$  is faithfully flat $D$-module, it suffices to check that, for all maximal ideals $\mathfrak{m}$ of $D$, we have $\mathfrak{m}\intR(E,D)\neq \intR(E,D)$ (cf. \cite[Theorem 2, pages 25 and 26]{HM22}). By way of contradiction, suppose there exists a maximal ideal $\mathfrak{m}$ of $D$ such that $\mathfrak{m}\intR(E,D)= \intR(E,D)$. This implies   $1=m_{1}\varphi_{1}+\dots +m_{n}\varphi_{n}$ for some $m_{1},\dots ,m_{n} \in \mathfrak{m}$ and $\varphi_{1},\dots ,\varphi_{n} \in \intR(E,D)$.  So, necessarily  at least one of  $\varphi_{i},\dots ,\varphi_{n}$ must be nonzero (otherwise, we would have $1=0$, a contradiction). Then, $1=m_{1}\varphi_{1}(e)+\dots +m_{n}\varphi_{n}(e)\in\mathfrak{m}$, for all $e\in E$ (because $\varphi_{1},\dots ,\varphi_{n} \in \intR(E,D)$). Hence, $1\in \mathfrak{m}$, which is a contradiction. Consequently,  $\intR(E,D)$  is faithfully flat $D$-module. The converse  is trivial. 
	 		
The proof for the case of $\mathrm{Int}_B(E,D)$ is similar to that of $\intR(E,D)$.
\end{proof}
	 
\begin{proposition}
For any nonempty subset $E$ of $D$, the following statements are equivalent:
\begin{enumerate}[$(1)$]
\item $\intR(E,D)$ is faithfully flat over $D[X]$;
\item $\intR(E,D)$ is $w$-faithfully flat over $D[X]$;
\item  $\intR(E,D)=D[X]$.
\end{enumerate}
\end{proposition}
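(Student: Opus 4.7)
The plan is to prove the cyclic chain $(3) \Rightarrow (1) \Rightarrow (2) \Rightarrow (3)$. The implication $(3) \Rightarrow (1)$ is trivial since $D[X]$ is faithfully flat over itself, and $(1) \Rightarrow (2)$ follows from the standard fact that a faithfully flat module localizes to a faithfully flat one at every prime ideal, combined with the localization criterion for $w$-faithful flatness already used in the preceding proposition (namely \cite[Proposition~2.5]{HK13}, applied now with base ring $D[X]$). Since $E\subseteq D$ forces $D[X]\subseteq\intR(E,D)$ by the corollary following Proposition~\ref{Comp}, the whole content of the statement lies in $(2)\Rightarrow(3)$, i.e.\ in proving the reverse inclusion $\intR(E,D)\subseteq D[X]$.

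To carry out $(2)\Rightarrow(3)$, I would first apply the localization criterion just mentioned to obtain that $\intR(E,D)_\mathfrak{M}$ is faithfully flat over $D[X]_\mathfrak{M}$ for every $\mathfrak{M}\in w$-$\mathrm{Max}(D[X])$. The key ring-theoretic input is then the classical ideal-contraction property of faithfully flat extensions: $I\,\intR(E,D)_\mathfrak{M}\cap D[X]_\mathfrak{M}=I$ for every ideal $I$ of $D[X]_\mathfrak{M}$. Given any $\varphi\in\intR(E,D)\subseteq K(X)$, I would write $\varphi=f/g$ with $f,g\in D[X]$ and $g\neq 0$; then $f=g\varphi$ lies in $g\,\intR(E,D)_\mathfrak{M}\cap D[X]_\mathfrak{M}=gD[X]_\mathfrak{M}$, and hence $\varphi=f/g\in D[X]_\mathfrak{M}$.

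To globalize, I would invoke the standard identity $R=\bigcap_{\mathfrak{m}\in w\text{-}\mathrm{Max}(R)}R_\mathfrak{m}$ valid for every integral domain $R$ (the nontrivial containment holds because, for $x\in\mathrm{Frac}(R)\setminus R$, the conductor $(R:x)$ is a proper divisorial ideal and is therefore contained in some $w$-maximal ideal), applied with $R=D[X]$. This yields $\varphi\in D[X]$, and thus $\intR(E,D)=D[X]$, completing $(2)\Rightarrow(3)$. The step requiring the most care is the initial invocation of the localization criterion for $w$-faithful flatness with $D[X]$ as the base ring rather than $D$; once this foundational point is secured, the rest is the familiar overring trick of factoring a rational function as $f/g$ and contracting a principal ideal across a faithfully flat extension.
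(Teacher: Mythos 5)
Your proposal is correct, but for the decisive implication $(2)\Rightarrow(3)$ it takes a genuinely different route from the paper. The paper disposes of $(2)\Rightarrow(3)$ in one line by citing \cite[Corollary 2.6]{HK13}, which says that a $w$-faithfully flat extension of domains with the same quotient field must be trivial, the only observation needed being that $D[X]\subseteq\intR(E,D)\subseteq K(X)$ (so the two rings share the quotient field $K(X)$). You instead reprove this fact in the case at hand: you localize at each $\mathfrak{M}\in w$-$\mathrm{Max}(D[X])$ via the criterion of \cite[Proposition 2.5]{HK13}, use the classical contraction property $I\,S\cap R=I$ of a faithfully flat ring extension applied to the principal ideal $gD[X]_\mathfrak{M}$ to get $\varphi=f/g\in D[X]_\mathfrak{M}$, and then globalize through the identity $D[X]=\bigcap_{\mathfrak{M}\in w\text{-}\mathrm{Max}(D[X])}D[X]_\mathfrak{M}$ (your justification via the conductor $(D[X]:\varphi)$ being a proper divisorial, hence $w$-, ideal is fine, since $v$-ideals are $t$-ideals are $w$-ideals and $w$ has finite character). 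This buys a self-contained argument that makes transparent \emph{why} the corollary holds, at the cost of being longer and of requiring the localization criterion in the direction opposite to the one used earlier in the paper (w-faithful flatness over $D[X]$ implies faithful flatness of $\intR(E,D)_\mathfrak{M}$ over $D[X]_\mathfrak{M}$ for all $w$-maximal $\mathfrak{M}$); that criterion is indeed an equivalence, so this is a point to cite carefully rather than a gap. Your treatments of $(3)\Rightarrow(1)$ and $(1)\Rightarrow(2)$ agree in substance with the paper, which simply labels them straightforward.
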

\begin{proof}
The implications (1) $\Rightarrow$ (2) and (3) $\Rightarrow$ (1) are straightforward. The implication (2) $\Rightarrow$ (3) follows from \cite[Corollary 2.6]{HK13} since $D[X]$ and $\intR(E,D)$ share the same quotient field.
\end{proof}
	 
As an immediate consequence of the previous result, we have: 
\begin{corollary}\label{CorFF1}
If $\mathrm{Int}(D)$ is not trivial $($that is, $\mathrm{Int}(D)\neq D[X])$, then $\intR(E,D)$ is not $(w$-$)$faithfully flat over $D[X]$, for any nonempty subset $E$ of $D$. 
\end{corollary}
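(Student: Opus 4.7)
The plan is to prove the corollary by contraposition, leveraging the equivalence established in the immediately preceding proposition. Assume that $\intR(E,D)$ is $(w\text{-})$faithfully flat over $D[X]$. Then, by applying the proposition (specifically the implications (1) $\Rightarrow$ (3) or (2) $\Rightarrow$ (3)), we obtain the equality $\intR(E,D) = D[X]$. The task then reduces to deriving $\mathrm{Int}(D) = D[X]$ from this equality, which contradicts the non-triviality hypothesis.

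The crucial observation is the inclusion $\mathrm{Int}(D) \subseteq \intR(E,D)$, which holds under the standing assumption $E \subseteq D$. Indeed, if $f \in \mathrm{Int}(D)$, then $f \in K[X] \subseteq K(X)$ and $f(E) \subseteq f(D) \subseteq D$, so $f \in \intR(E,D)$. Combining this with the assumed equality $\intR(E,D) = D[X]$ yields $\mathrm{Int}(D) \subseteq D[X]$; together with the always-valid reverse inclusion $D[X] \subseteq \mathrm{Int}(D)$, this forces $\mathrm{Int}(D) = D[X]$, contradicting the hypothesis that $\mathrm{Int}(D)$ is non-trivial. Therefore $\intR(E,D)$ cannot be $(w\text{-})$faithfully flat over $D[X]$.

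There is essentially no obstacle here, since all the heavy lifting has been done in the preceding proposition (whose nontrivial direction relies on \cite[Corollary 2.6]{HK13} and the fact that $D[X]$ and $\intR(E,D)$ share the same quotient field $K(X)$). The proof amounts to chaining the implication (2) $\Rightarrow$ (3) with the trivial observation $\mathrm{Int}(D) \subseteq \intR(E,D)$ afforded by $E \subseteq D$. The argument therefore fits into just a few lines and needs no additional machinery beyond what is already in the excerpt.
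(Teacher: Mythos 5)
Your proposal is correct and follows exactly the route the paper intends: the corollary is stated as an immediate consequence of the preceding proposition, and your chain — $(w$-$)$faithful flatness forces $\intR(E,D)=D[X]$, which together with $\mathrm{Int}(D)\subseteq\intR(E,D)$ (valid since $E\subseteq D$) and $D[X]\subseteq\mathrm{Int}(D)$ gives $\mathrm{Int}(D)=D[X]$ — is precisely the omitted verification. No gaps; the argument is complete.
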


This last corollary allows us to provide the following example.
\begin{example}
Let $E$ be a nonempty subset of $\mathbb{Z}$.  Since $\mathrm{Int}(\mathbb{Z})\neq \mathbb{Z}[X]$, it follows from Corollary \ref{CorFF1} that  $\intR(E,\mathbb{Z})$ is not ($w$-)faithfully flat over $\mathbb{Z}[X].$ On the other hand, by Proposition \ref{FFPr},   $\intR(E,\mathbb{Z})$ is faithfully flat as a $\mathbb{Z}$-module (because  $\mathbb{Z}$ is  a PID). Remarkably, when $E$ is infinite, $\intR(E,\mathbb{Z})$ coincides with $\mathrm{Int}(E,\mathbb{Z})$ as asserted Proposition X.1.1 of \cite{C97}, and then $\intR(E,\mathbb{Z})$ is not only faithfully flat as a $\mathbb{Z}$-module but also free with a regular basis by \cite[Corollary II.1.6]{C97}.
\end{example} 

All established results regarding the flatness of $\mathrm{Int}(D)$ as a $D[X]$-module assert that if the flatness of $\mathrm{Int}(D)$ holds over $D[X],$  then $\mathrm{Int}(D)$ is necessarily trivial. However, the following proposition shows that it is possible to have  $\intR(D)$ flat over $D[X]$ while $\intR(D)\neq D[X]$. Furthermore, it shows that $\intR(D)$ can be faithfully flat over $D$ without being faithfully flat over $D[X]$.

\begin{proposition}
For any valuation domain $V$ with maximal ideal $\mathfrak{m}$, we have the following: 
\begin{enumerate}[$(1)$]
\item $\intR(V)$ is faithfully flat over $V$.
\item If $\mathfrak{m}$ is principal with finite residue field, then $\intR(V)$ is not faithfully flat over $V[X]$.
\item If the quotient field of $V$ is algebraically closed, then  $\intR(V)$ is flat over $V[X]$ but not faithfully flat over $V[X]$. 
\end{enumerate}    
\end{proposition}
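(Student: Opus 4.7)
Part (1) is immediate from Proposition~\ref{FFPr}: any valuation domain is Pr\"ufer, so taking $E=V$ yields that $\intR(V)$ is faithfully flat over $V$.

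For part (2), write $\mathfrak{m}=(\pi)$ and let $q=|V/\mathfrak{m}|$. Fermat's little theorem in the finite residue field gives $x^q\equiv x\pmod{\mathfrak{m}}$ for every $x\in V$, whence $\psi(X)=(X^q-X)/\pi$ lies in $\mathrm{Int}(V)\setminus V[X]$. Hence $\mathrm{Int}(V)\neq V[X]$, and Corollary~\ref{CorFF1} concludes that $\intR(V)$ is not $(w$-$)$faithfully flat over $V[X]$.

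For part (3), I would treat the two claims separately. For the failure of faithful flatness, pick $a\in K\setminus V$ (possible because $V\neq K$), so $v(a)<0$; for every $x\in V$, $v(x-a)=v(a)<0$, hence $1/(x-a)\in\mathfrak{m}\subset V$. Thus $\varphi(X)=1/(X-a)$ lies in $\intR(V)\setminus V[X]$, so $\intR(V)\neq V[X]$. The equivalence proved in the preceding proposition (``$\intR(V)$ faithfully flat over $V[X]$ $\Leftrightarrow$ $\intR(V)=V[X]$'') then shows that $\intR(V)$ is not $(w$-$)$faithfully flat over $V[X]$.

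For the flatness, my plan is to establish the identity
\[
\intR(V)=(1+\mathfrak{m}V[X])^{-1}V[X]
\]
directly (thereby bypassing Prestel--Rippol), so that $\intR(V)$ is a localization of $V[X]$ and hence flat over $V[X]$. The containment ``$\supseteq$'' is immediate, since $G(x)\in 1+\mathfrak{m}=V^{\ast}$ for every $G\in 1+\mathfrak{m}V[X]$ and $x\in V$. For ``$\subseteq$'', write $\varphi\in\intR(V)$ in lowest terms as $f/g$ with $f,g\in K[X]$; since $K$ is algebraically closed, $g(X)=c\prod_{i}(X-\alpha_i)$, and the roots $\alpha_i$ must all lie in $K\setminus V$ (otherwise $\varphi$ would fail to be defined at $\alpha_i\in V$), so $v(\alpha_i)<0$. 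Setting $\beta_i=1/\alpha_i\in\mathfrak{m}$ and $G(X)=\prod_i(1-\beta_iX)\in 1+\mathfrak{m}V[X]$, one rewrites $\varphi=(f/C)/G$ with $C\in K^{\ast}$; since $G(V)\subseteq V^{\ast}$, the condition $\varphi(V)\subseteq V$ forces $f/C\in\mathrm{Int}(V)$. I then appeal to the classical identity $\mathrm{Int}(V)=V[X]$ whenever $V/\mathfrak{m}$ is infinite, which applies here because $K$ algebraically closed implies $V/\mathfrak{m}$ algebraically closed (for any monic $f\in V[X]$, each root $\alpha\in K$ satisfies $v(\alpha)\geq 0$, else the leading term of $f(\alpha)=0$ would dominate the remaining ones in valuation). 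The main delicate step is thus this residue-field claim together with the classical $\mathrm{Int}$-identity---both standard but worth pinning down carefully; everything else reduces to the algebraic factorisation above.
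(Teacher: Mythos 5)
Your proposal is correct, and its logical skeleton matches the paper's: (1) via Proposition~\ref{FFPr}, (2) via non-triviality of $\mathrm{Int}(V)$ plus Corollary~\ref{CorFF1}, and (3) flat because $\intR(V)$ is a localization of $V[X]$, not faithfully flat because $\intR(V)\neq V[X]$. The difference is that you make the cited ingredients self-contained: in (2) the paper invokes \cite[Proposition I.3.16]{C97}, whereas you exhibit the explicit witness $(X^q-X)/\pi$; in (3) the paper simply quotes \cite[Proposition 2.4]{CL98} to get $\intR(V)=S^{-1}V[X]$, whereas you prove a variant of that identity directly, namely $\intR(V)=(1+\mathfrak{m}V[X])^{-1}V[X]$ (your multiplicative set is honestly multiplicatively closed, which is a small technical plus), by factoring the denominator over the algebraically closed field $K$, pulling out a factor $G\in 1+\mathfrak{m}V[X]$ that is unit-valued on $V$, and then using that $V/\mathfrak{m}$ is algebraically closed (hence infinite) so that $\mathrm{Int}(V)=V[X]$ --- i.e., the complementary direction of the same classical result I.3.16. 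Your non-faithful-flatness step in (3) also differs mildly: you show $\intR(V)\neq V[X]$ via $1/(X-a)$ with $a\in K\setminus V$ and then invoke the equivalence of the preceding proposition, while the paper observes directly that $S^{-1}V[X]\neq V[X]$; both are fine, and both arguments (like the paper's citation of \cite{CL98}) tacitly assume the valuation is non-trivial, which you at least flag explicitly. The two ``delicate'' facts you defer --- that the residue field of a valuation domain with algebraically closed quotient field is algebraically closed, and $\mathrm{Int}(V)=V[X]$ for infinite residue field --- are standard and your sketches of them are sound, so no gap remains; your route buys self-containedness at the cost of reproving known results, while the paper's buys brevity through citation.
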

\begin{proof}
(1) This follows from  Proposition \ref{FFPr}.

(2) Assume that $\mathfrak{m}$ is principal with finite residue field. Then, by \cite[Proposition I.3.16]{C97}, $\mathrm{Int}(V)\neq V[X]$, and hence it follows from the previous corollary that $\intR(V)$ is not faithfully flat over $V[X]$. 

(3) Assume that the quotient field of $V$ is algebraically closed. From \cite[Proposition 2.4]{CL98}, we have $\intR(V)=S^{-1}V[X]$, where $S=\{1+mX;\,m\in\mathfrak{m}\}$. Then, $\intR(V)$ is flat over $V[X]$, and also it is not faithfully flat over $V[X]$ since $S^{-1}V[X]$ is not equal to $V[X]$.
\end{proof}

\begin{corollary}
Let $V$ be a DVR with finite residue field. If the quotient field of $V$ is algebraically closed, then the $V[X]$-module $\intR(V)$ is flat but not faithfully flat. 
\end{corollary}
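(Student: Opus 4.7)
The plan is simply to combine parts (2) and (3) of the immediately preceding proposition. Since $V$ is a DVR, by definition its maximal ideal $\mathfrak{m}$ is principal, so the hypothesis of part (2) (maximal ideal principal, residue field finite) is met as soon as we know the residue field is finite. Thus part (2) yields directly that $\intR(V)$ is not faithfully flat over $V[X]$. Meanwhile, the assumption that the quotient field of $V$ is algebraically closed is exactly the hypothesis of part (3), which then gives that $\intR(V)$ is flat over $V[X]$ (via the description $\intR(V)=S^{-1}V[X]$ with $S=\{1+mX;\,m\in\mathfrak{m}\}$ used in the proof of the proposition).

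Concretely, the proof would be a two-line application: first, invoke part (2) to conclude non-faithful-flatness; second, invoke part (3) to conclude flatness; the conjunction of these two statements is the claim. No further verification is required, since every hypothesis of the corollary immediately matches a hypothesis of one of these two parts.

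The only conceptual point worth noting — and in some sense the ``obstacle'' — is a consistency concern about the hypotheses: a DVR has value group $\mathbb{Z}$, which is not divisible, so one may wonder whether a DVR can simultaneously have finite residue field and algebraically closed quotient field. However, since the corollary is stated as a conditional implication, this is not an obstruction to the proof itself; the corollary remains a correct formal consequence of the preceding proposition regardless of whether its hypotheses can be simultaneously realized. Hence the proof is a clean immediate deduction with no computation.
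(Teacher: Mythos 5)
Your proposal is correct and matches the paper's (implicit) argument: the corollary is stated as an immediate consequence of the preceding proposition, obtained by noting that a DVR has principal maximal ideal so part (2) applies, while the algebraically closed quotient field hypothesis invokes part (3) — which in fact already yields both flatness and non-faithful flatness on its own. Your side remark about the hypotheses being hard (indeed impossible, since a valuation on an algebraically closed field has divisible value group) to realize simultaneously does not affect the validity of the deduction, as you correctly observe.
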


Now, let us investigate the cyclicity  of the two $D$-modules $\intR(D)$ and  $\mathrm{Int}_B(E,D)$.	 
\begin{proposition}\label{12}
The $D$-module $\intR(D)$ is never cyclic. 
\end{proposition}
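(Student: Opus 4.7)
The plan is to derive a contradiction from the assumption that $\intR(D)=D\varphi$ for some $\varphi\in\intR(D)$, by exploiting the fact that $\intR(D)$ contains the polynomial ring $D[X]$ (as noted after the corollary to Proposition~\ref{Comp}, since trivially $E=D\subseteq D$).

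First, I would observe that since $1\in\intR(D)$, the generator $\varphi$ must satisfy $1=d\varphi$ for some $d\in D$, so $\varphi=1/d$ lies in the quotient field $K$. Consequently, $\intR(D)=D\varphi\subseteq K\varphi=K$. But the indeterminate $X$ belongs to $D[X]\subseteq\intR(D)$ while $X\notin K$, which is the desired contradiction.

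Alternatively (and perhaps more conceptually), I would invoke Remark~\ref{123} to recall that $\intR(D)$ is a torsion-free $D$-module. A cyclic torsion-free $D$-module is isomorphic to a nonzero ideal of $D$, hence has $D$-rank one. However, the inclusion $D[X]\subseteq\intR(D)$ shows that $1,X,X^2,\dots$ is an infinite $D$-linearly independent family inside $\intR(D)$, so $\intR(D)$ has infinite rank as a $D$-module. This contradicts cyclicity.

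No real obstacle is expected; the only point that needs a moment of care is justifying that the generator, if it existed, would have to be a unit in $K$ (which follows immediately from $1\in\intR(D)$), or equivalently, that a cyclic torsion-free module over a domain has rank one. Either route gives a short and self-contained argument.
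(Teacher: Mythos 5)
Your first argument is correct and is essentially the paper's own proof: in both cases the key step is that $1\in\intR(D)$ forces the generator $\varphi$ to lie in $K$ (via $1=d\varphi$), and the contradiction is obtained from the element $X\in\intR(D)$ — the paper phrases it as $\varphi$ being a unit of $D$, hence $\intR(D)=D$, contradicting $X\in\intR(D)\setminus D$, while you note directly that $\intR(D)=D\varphi\subseteq K$ yet $X\notin K$; the difference is cosmetic. Your alternative rank argument (cyclic torsion-free $\Rightarrow$ rank one, versus the infinite $D$-linearly independent family $1,X,X^2,\dots$) is a genuinely different and equally valid route, but it is not needed and the paper does not use it.
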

\begin{proof} 
By way of contradiction, suppose that $\intR(D)$ is a cyclic $D$-module. Then there exists $\varphi \in \intR(D)$ such that $\intR(D) = \varphi D$. Since $D \subseteq \intR(D)$, there exists $d \in D$ such that $1 = d\varphi$ and then $\varphi$ is an element of $K$. Thus, $\varphi$ is an element of $D$ and it is invertible in $D$ because $\varphi \in \intR(D)$. Therefore, $\intR(D)=D$, which is a contradiction since $\psi(X)=X\in \intR(D)\backslash D$. Consequently, the $D$-module $\intR(D)$ is never cyclic.
\end{proof}
	  
Concerning the $D$-module  $\mathrm{Int}_B(E,D)$, we have:
\begin{proposition}
For any overring $B$ of $D$, the $D$-module $\mathrm{Int}_B(E,D)$ is cyclic if and only if it is equal to $D$. 
\end{proposition}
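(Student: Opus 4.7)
The reverse direction is immediate: if $\mathrm{Int}_B(E,D) = D$, then it is generated as a $D$-module by $1$. The real content is the forward direction, and my plan is to imitate verbatim the argument used for $\intR(D)$ in Proposition \ref{12}.

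Specifically, suppose that $\mathrm{Int}_B(E,D) = fD$ for some generator $f \in \mathrm{Int}_B(E,D) \subseteq B[X]$. Since $1 \in D \subseteq \mathrm{Int}_B(E,D)$, there must exist $d \in D$ with $df = 1$. The key step is to observe that this equation lives in the polynomial ring $B[X]$, which is an integral domain because $B$ is; therefore $f$ must be a unit of $B[X]$, i.e.\ a nonzero constant in $B$. Evaluating $f$ at any element $e$ of the (nonempty) set $E$ then yields $f = f(e) \in D$, and the relation $df = 1$ with $d, f \in D$ shows that $f$ is a unit of $D$. Consequently $\mathrm{Int}_B(E,D) = fD = D$, as required.

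I do not anticipate any real obstacle here; the argument is a direct adaptation of Proposition \ref{12}. The only substantive twist is that the conclusion is $\mathrm{Int}_B(E,D) = D$ rather than an outright contradiction. Indeed, for $\intR(D)$ one always has the polynomial $\psi(X) = X \in \intR(D) \setminus D$, which produces a contradiction with $\intR(D) = D$; in the present more general setting, $\mathrm{Int}_B(E,D)$ can legitimately coincide with $D$ (for example, when $D$ is integrally closed and $E$ is not a fractional subset of $D$, as in the example following Proposition \ref{ProNCP}), so one simply accepts this as the conclusion instead of trying to derive a contradiction.
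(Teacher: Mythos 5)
Your argument is correct and is essentially the paper's own proof: the paper simply says to mimic the argument of Proposition \ref{12} (extract $df=1$ from $1\in D\subseteq \mathrm{Int}_B(E,D)=fD$, conclude $f$ is a constant lying in $D$ and a unit, hence $\mathrm{Int}_B(E,D)=fD=D$), with the converse being trivial. Your only deviation is cosmetic—you deduce constancy of $f$ from unit considerations in $B[X]$ rather than from $f=1/d\in K$—and your remark that no contradiction is sought here, unlike for $\intR(D)$, matches the paper's intent.
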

\begin{proof}
We argue mimicking the  proof of the above proposition to show the direct implication.  The converse is trivial.
\end{proof}
	 
It is clear that for any $d$-ring $D$, both $\mathrm{Int}(D)$ and $\intR(D)$ are simultaneously (locally) free as $D$-modules. Thus, we close our paper with the following open question.
\begin{question} 
Is there any example of a non-$d$-ring $D$ for which the $D$-module $\intR(D)$ is (locally) free or not?
\end{question}

{
\textbf{Acknowledgements.} 
The authors would like to express their sincere thanks to the referee for the rich, fruitful and detailed comments that have greatly helped to improve this paper.}


\begin{thebibliography}{11}
\bibitem{AAZ91} D.D. Anderson, D.F. Anderson and M. Zafrullah, Rings between $D[X]$ and $K[X]$, Houston J. Math. \textbf{17} (1991), 109--129.

\bibitem{B00} V. Barucci, Mori domains, in: Non-Noetherian Commutative Ring Theory, Math. Appl., Kluwer (S. Chapman and S. Galz ed.), \textbf{520} (2000), 57--73.

\bibitem{C78} P.-J. Cahen, Fractions rationnelles à valeurs entières, Univ. Sci. Clermont 66, Math.  \textbf{16} (1978), 85--100.
	 			 	 	
\bibitem{C93} P.-J. Cahen, Integer-valued polynomials on a subset,  Proc. Amer. Math. Soc. \textbf{117} (1993), 919--929.
	 	
\bibitem{C97} P.-J. Cahen and J.-L. Chabert, \textit{Integer-Valued Polynomials}, Amer.\ Math.\ Soc.\ Surveys and Monographs, {\bf 48}, Providence, 1997.

\bibitem{CL98}  P.-J. Cahen and K.A. Loper, Rings of integer-valued rational functions, J. Pure Appl. Algebra \textbf{131} (2) (1998),  179--193.

\bibitem{CLT00} P.-J. Cahen, A.K. Loper and F. Tartarone, Integer-valued polynomials and Pr\"ufer $v$-multiplication domains, J. Algebra \textbf{226} (2) (2000), 765--787.

\bibitem{COT23} M.M. Chems-Eddin, O. Ouzzaouit and A. Tamoussit, On some polynomial overrings of integral domains, Matemati\v{c}ki Vesnik  2024 (Article in press), doi: 10.57016/MV-QXYR7351	
 	
\bibitem{CT23} M.M. Chems-Eddin and A. Tamoussit, Various properties of  a general class of integer-valued polynomials,  Beitr. Algebra Geom. \textbf{64} (2023), 81--93

\bibitem{CHK12} W. Chung, J. Ha and H. Kim, Some remarks on strong Mori domains, Houston J. Math. \textbf{38} (4)(2012), 1051--1059.

\bibitem{SF20} S. Gabelli and F. Tartarone, On the class group of integer-valued polynomial rings over Krull domains, J. Pure Appl. Algebra \textbf{149} (2000), 47--67.

\bibitem{Gi92} R. Gilmer, \textit{Multiplicative Ideal Theory}, Queen's Papers in Pure and Appl. Math., vol. 90, Queen's University, Kingston, Ontario, 1992.


\bibitem{HOR16} O.A. Heubo-Kwegna, B. Olberding and A. Reinhart, Group-theoretic and topological invariants of completely integrally closed Pr\"ufer domains, J. Pure Appl. Algebra \textbf{220} (2016), 3927--3947. 

\bibitem{HS06}  C. Huneke and I. Swanson, \textit{Integral Closure of Ideals, Rings, and Modules}, London Math. Soc. Lecture Note Ser., 336, Cambridge Univ. Press, (2006).

\bibitem{HK13} H. Kim and T.I. Kwon, Module-theoretic characterizations of strongly $t$-linked extensions, Kyungpook Math. J. \textbf{53} (2013), 25--35.

\bibitem{KW14} H. Kim and F.G. Wang, On LCM-stable modules, J. Algebra Appl. \textbf{13} (4) (2014), 1350133.

\bibitem{BL22} B. Liu, Ring structure of integer-valued rational functions, J.  Commut. Algebra \textbf{16} (3) (2024), 305--335.

\bibitem{BL23} B. Liu, \textit{Rings of Integer-Valued Rational Functions}, Ph.D. Thesis, Ohio State University, (2023). 
 	
\bibitem{Lop94} K.A. Loper,	On Pr\"ufer non-D-rings, J. Pure Appl. Algebra \textbf{96} (1994), 271--278.

\bibitem{HM22} H. Matsumura, \textit{Commutative Algebra}, Second Edition, Benjamin-Cummings, (1980).	
 	
\bibitem{O19} A. Ostrowski, \"{U}ber ganzwertige polynome in algebraischen zahlk\"{o}rpern, J. Reine Angew. Math. \textbf{149} (1919), 117--124. 	


\bibitem{MP22} M.H. Park, Globalized pseudo-valuation domains of integer-valued polynomials on a subset, Comm. Algebra \textbf{50} (6) (2022), 2507--2516.

\bibitem{P19} G. P\'{o}lya, \"{U}ber ganzwertige polynome in algebraischen zahlk\"{o}rpern, J. Reine Angew. Math.  \textbf{149} (1919), 79--116.

\bibitem{PR91} A. Prestel and C. Rippol, Integral-valued rational functions on valued fields, Manuscripta Math. \textbf{73} (1991), 437--452.	 


\bibitem{S90}  M. Spivakovsky, Valuations, the linear Artin approximation theorem and convergence of formal functions, Proc. of the II SBWAG, Santiago de Compostela, \textbf{54} (1990), 237--254.


\bibitem{AT2021} A. Tamoussit, On the ring of $D$-valued $R$-polynomials over $E$, J. Algebra Appl.  \textbf{21} (5)  (2022), 2250087. 

\bibitem{TT22}  A. Tamoussit and F. Tartarone, On weakly-Krull domains of integer-valued polynomials,  Ricerche Mat. \textbf{73} (2024), 1937--1944.

\bibitem{TT23}  A. Tamoussit and F. Tartarone,  Essential properties for rings of integer-valued polynomials,  New York J. Math. \textbf{29} (2023), 467--487.

\bibitem{V06} M. Vaquié, Valuations and local uniformization, in: Singularity Theory and Its Applications, Adv. Stud. Pure Math. \textbf{43} (2006), 477--527.


\bibitem{Z00} M. Zafrullah, Putting $t$-invertibility to use, in: Non-Noetherian Commutative Ring Theory (S. Chapman and S. Glaz, Eds.) Math. Appl., vol. 520, Kluwer Acad. Publ., Dordrecht, 2000, 429--457.

\end{thebibliography}
\end{document}